\long\def\comment#1\endcomment{}
\theoremstyle{plain}
\newtheorem{theorem}{\sc Theorem}[section]
\newtheorem{lemma}[theorem]{\sc Lemma}
\newtheorem{prop}[theorem]{\sc Proposition}
\newtheorem{coroll}[theorem]{\sc Corollary}
\newtheorem{klemma}[theorem]{\sc Key-lemma}
\theoremstyle{plain}
\theoremstyle{exercise}
\newtheorem{remark}[theorem]{\sc Remark}
\makeatletter \@addtoreset{equation}{section} \makeatother
\def\eqref#1{\thetag{\ref{#1}}}
\let\latexref=\ref
\def\ref#1{{\normalfont{\latexref{#1}}}}
\newcommand{\ldot}{{\:\raisebox{2,3pt}{\text{\circle*{1.5}}}}}
\newcommand{\udot}{{\:\raisebox{3pt}{\text{\circle*{1.5}}}}}
\def\dlim_#1{{\displaystyle\lim_{#1}}^\hdot}
\newcommand{\cchar}{\operatorname{\sf char}}
\newcommand{\Ker}{\operatorname{{\rm Ker}}}
\newcommand{\id}{\operatorname{\rm id}}
\newcommand{\Mor}{\mathrm{Mor}}
\newcommand{\Ob}{\mathrm{Ob}}
\renewcommand{\Bar}{\mathrm{Bar}}
\newcommand{\Cobar}{\mathrm{Cobar}}
\newcommand{\Hom}{\mathrm{Hom}}
\newcommand{\Hoch}{\mathrm{Hoch}}
\newcommand{\dg}{\mathrm{dg}}
\newcommand{\Ho}{\mathrm{Ho}}
\newcommand{\Sets}{\mathscr{S}ets}
\newcommand{\Vect}{\mathscr{V}ect}
\newcommand{\Alg}{{\mathscr{A}lg}}
\newcommand{\Cat}{{\mathscr{C}at}}
\newcommand{\Fun}{{\mathrm{Fun}}}
\renewcommand{\k}{\Bbbk}
\renewcommand{\cchar}{\mathrm{char}\ }
\newcommand{\coh}{\mathrm{coh}}
\newcommand{\Coh}{{\mathscr{C}{oh}}}
\newcommand{\sotimes}{\overset{\sim}{\otimes}}
\newcommand{\lin}{\mathrm{lin}}
\newcommand{\Hot}{\mathrm{Hot}}
\newcommand{\FFun}{{\mathscr{F}un}}
\newcommand{\Tot}{\mathrm{Tot}}
\newcommand{\RRHom}{{\mathbb{R}\underline{\mathrm{Hom}}}}
\newcommand{\Conv}{\mathrm{Conv}}
\newcommand{\g}{\mathfrak{g}}
\newcommand{\MC}{\mathrm{MC}}
\newcommand{\sm}{\mathrm{sf}}
\title{\sc{On the twisted tensor product of small dg categories}}
\author{\sc{Boris Shoikhet}}
\date{}
\begin{document}\maketitle
{\footnotesize
\begin{center}{\parbox{4,5in}{{\sc Abstract.}
Given two small dg categories $C,D$, defined over a field, we introduce their (non-symmetric) twisted tensor product $C\sotimes D$. We show that $-\sotimes D$ is left adjoint to the functor $\Coh(D,-)$, where $\Coh(D,E)$ is the dg category of dg functors $D\to E$ and their coherent natural transformations. This adjunction holds in the category of small dg categories (not in the homotopy category of dg categories $\Hot$). We show that for $C,D$ cofibrant, the adjunction descends to the corresponding adjunction in the homotopy category. Then comparison with a result of To\"{e}n [To] shows that, for $C,D$ cofibtant, $C\sotimes D$ is isomorphic to $C\otimes D$, as an object of the homotopy category $\Hot$.

}}
\end{center}
}

\section*{\sc Introduction}

\subsection{\sc }
The construction of {\it twisted tensor product} $C\sotimes D$ of small dg categories, provided in this paper, mimics for the case of dg enrichment the construction of {\it Gray product} of strict 2-categories [Gr]. Recall that for the Gray product $A\times_G B$ of strict 2-categories the diagram
\begin{equation}\label{eqintro1}
\xymatrix{
a\times b\ar[r]^{f\times\id}\ar[d]_{\id\times g}&    a_1\times b\ar[d]^{\id\times g}\\
a\times b_1\ar[r]_{f\times \id}\ar@2{->}[ru]&a_1\times b_1
}
\end{equation}
commutes only up to a new 2-arrow. It defines a monoidal product on the category $\Cat_2$ of strict 2-categories, and there is an 
adjunction
\begin{equation}\label{eqintro2}
\Cat_2(A\times_G B,C)=\Cat_2(A,{\mathscr{P}s}(B,C))
\end{equation}
where ${\mathscr{P}s}(-,-)$ stands for the 2-category of {\it pseudo-natural transformations}.
There is a monoidal closed model structure on the category $\Cat_2$ [L], and from this point of view the Gray product is better than the cartesian product. 

Let $C,D$ be small dg categories over a field $\k$. We construct a small dg category $C\sotimes D$ as follows. It has objects $\Ob(C)\times \Ob(D)$, and its morphisms is a free dg envelope of morphism in $C\times\id_Y$, $\id_X\times D$, for any $X\in \Ob(C), Y\in\Ob(D)$, and of some new morphisms. 

The simplest among the new morphism is $\varepsilon(f;g)$ of degree $\deg\varepsilon(f;g)=\deg f+\deg g-1$, for homogeneous morphisms $f$ in $C$ and $g$ in $D$, $f\colon X_0\to X_1, g\colon Y_0\to Y_1$. We define its  differential as
\begin{equation}\label{eqintro3}
\begin{aligned}
\ &d\varepsilon(f;g)+\varepsilon(df;g)+(-1)^{\deg f}\varepsilon(f;dg)=\\
&(f\otimes \id_{Y_1})\star (\id_{X_0}\otimes g)-
(-1)^{\deg f(\deg g+1)}(\id_{X_1}\otimes g)\star 
(f\otimes \id_{Y_0})
\end{aligned}
\end{equation}
in the spirit of the Drinfeld dg quotient [Dr].

This new morphism $\varepsilon(f;g)$ is thought of as a dg counterpart of the new 2-morphism in diagram \eqref{eqintro1} for the case of Gray product.
There are also higher new morphisms $\varepsilon(f;\ g_1,\dots,g_n)$, for a homogeneous morphism $f$ in $C$, and for a chain of composable homogeneous morphisms $g_1,\dots,g_n$ in $D$, with the differentials of them defined accordingly, see \eqref{eqd1}. They are subject to relations $(R_1)$-$(R_4)$, see Section \ref{sectiondeftwist}

Our first result shows that an adjunction, analogous to \eqref{eqintro2}, holds.

\begin{theorem}\label{theorem01}
Let $C,D,E$ be small dg categories over a field $\k$. One has an adjunction
\begin{equation}\label{eqintro4}
\Cat_\dg(C\sotimes D,E)=\Cat_\dg(C,\Coh_\dg(D,E))
\end{equation}
where $\Coh_\dg(-,-)$ stands for the dg category, whose objects are dg functors and whose morphisms are coherent natural transformations between them.
\end{theorem}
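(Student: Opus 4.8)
The plan is to prove the adjunction directly, by exhibiting a bijection between the two hom-sets which is natural in $E$ (naturality in $C$ and $D$ being entirely analogous), reading everything off the generators-and-relations presentation of $C\sotimes D$ from Section \ref{sectiondeftwist}. The starting point is that, by the universal property of the free dg envelope, a dg functor $F\colon C\sotimes D\to E$ is nothing but a map on objects $\Ob(C)\times\Ob(D)\to\Ob(E)$ together with images in $E$ of the generating morphisms --- the $f\otimes\id_Y$, the $\id_X\otimes g$, and the higher generators $\varepsilon(f;g_1,\dots,g_n)$ --- subject to exactly two families of constraints: compatibility with the differential, imposed on generators by \eqref{eqintro3} and \eqref{eqd1}, and the relations $(R_1)$--$(R_4)$. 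The whole task is to recognize this package of data as the data of a dg functor $\Phi\colon C\to\Coh_\dg(D,E)$.

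First I would set up the two mutually inverse assignments on raw data. Given $F$, define $F^\flat=\Phi$ as follows. Since the relations $(R_i)$ force $\id_X\otimes(-)$ to be a dg functor $D\to C\sotimes D$, the composite $\Phi(X):=F\circ(\id_X\otimes-)$ is a dg functor $D\to E$; this is the value of $\Phi$ on the object $X\in\Ob(C)$. On a morphism $f\colon X_0\to X_1$ of $C$ let $\Phi(f)$ be the coherent natural transformation $\Phi(X_0)\Rightarrow\Phi(X_1)$ whose component on a composable chain $(g_1,\dots,g_n)$ in $D$ is $F\bigl(\varepsilon(f;g_1,\dots,g_n)\bigr)$, and whose zeroth, object-indexed component at $Y$ is $F(f\otimes\id_Y)$. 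Conversely, given $\Phi$, reconstruct $F=\Phi^\sharp$ on each generator by the same formulas read backwards. These assignments are visibly inverse to one another at the level of raw data, so the real content is that each lands in the correct class of \emph{structured} objects.

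The heart of the matter --- and the step I expect to be the main obstacle --- is checking that the two sets of structure equations correspond on the nose. Concretely I would verify: (i) once pushed through the chain-map condition on $F$, the differential identity \eqref{eqintro3} together with its higher analogue \eqref{eqd1} for $\varepsilon(f;g_1,\dots,g_n)$ is precisely the equation expressing that $\Phi(f)$ is a coherent natural transformation of the correct degree, equipped with the correct differential in $\Coh_\dg(D,E)$; and (ii) the relations $(R_1)$--$(R_4)$ split into those encoding the coherence of each individual $\Phi(f)$ with composition of the $g_i$ inside $D$, and those encoding functoriality of $\Phi$ on $C$, namely $\Phi(f'\circ f)=\Phi(f')\circ\Phi(f)$ and $\Phi(\id_X)=\id_{\Phi(X)}$ for the composition law of coherent natural transformations. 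This is a sign-sensitive bookkeeping exercise; the genuine work is to confirm a perfect dictionary --- no relation without a counterpart axiom, and no axiom of $\Coh_\dg(D,E)$ without a counterpart relation --- so that $F$ is a well-defined dg functor if and only if $\Phi$ is. Since the presentation $(R_1)$--$(R_4)$ and \eqref{eqd1} was engineered to mirror the definition of $\Coh_\dg(D,E)$, the correspondence ought to be tautological once the indexing conventions and Koszul signs are aligned.

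Finally, naturality in $E$ is immediate: post-composing $F$ with a dg functor $E\to E'$ corresponds component by component to post-composing each $\Phi(X)$ and each component of each $\Phi(f)$ with the same functor, which is exactly the map $\Coh_\dg(D,E)\to\Coh_\dg(D,E')$ induced by functoriality of $\Coh_\dg(D,-)$. This yields the claimed natural identification \eqref{eqintro4} in $\Cat_\dg$.
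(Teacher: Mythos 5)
Your proposal is correct and follows essentially the same route as the paper: the paper's proof also defines $\Phi(F)(x)=F|_{\{x\}\sotimes D}$, takes $F(\varepsilon(f;g_1,\dots,g_n))$ as the components of the coherent natural transformation $\Phi(F)(f)$, and observes that the differential formulas \eqref{eqd0}--\eqref{eqd1} and the relation \eqref{eqsuper} are precisely the conditions making $\Phi(F)(f)$ a coherent natural transformation and $\Phi(F)$ a dg functor. The paper merely states this dictionary more tersely, leaving the inverse construction and the sign bookkeeping implicit, exactly as you anticipate.
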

We stress that this adjunction holds in the category $\Cat_\dg(\k)$ itself, not in its homotopy category.
\subsection{\sc }
The most of work in this paper is done for computation of the homotopy type of the dg category $C\sotimes D$.
We are able to find the homotopy type of $C\sotimes D$, provided $C,D$ are cofibrant for the Tabuada closed model structure on $\Cat_\dg$ [Tab1]. We have:

\begin{theorem}\label{theorem02}
Let small dg categories $C,D$ be cofibrant for the Tabuada closed model structure. Then $C\sotimes D$ is isomorphic, as an object of the homotopy category $\Hot$ of small dg categories, to the ordinary tensor product $C\otimes D$. 
\end{theorem}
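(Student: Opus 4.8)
The plan is to promote the strict adjunction of Theorem \ref{theorem01} to a derived adjunction on $\Hot$ and then to match its right adjoint with To\"en's internal Hom, concluding by uniqueness of adjoints. Recall that To\"en [To] equips $\Hot$ with a closed symmetric monoidal structure for the derived tensor product $\otimes^{\mathbb{L}}$, whose internal Hom is $\RRHom(D,-)$, so that $\Hot(C\otimes^{\mathbb{L}}D,E)=\Hot(C,\RRHom(D,E))$ naturally in $E$. Since $C$ and $D$ are cofibrant, $C\otimes^{\mathbb{L}}D=C\otimes D$ in $\Hot$, so it suffices to exhibit a natural isomorphism $C\sotimes D\cong C\otimes^{\mathbb{L}}D$. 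I would obtain this by showing that $-\sotimes D$ and $-\otimes^{\mathbb{L}}D$ are both left adjoint, on $\Hot$, to functors that coincide, namely a derived version of $\Coh_\dg(D,-)$ and $\RRHom(D,-)$.

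First I would show that the pair $(-\sotimes D,\ \Coh_\dg(D,-))$ of Theorem \ref{theorem01} is a Quillen adjunction for the Tabuada model structure; since all objects are fibrant there, this amounts to checking that $-\sotimes D$ is left Quillen, i.e.\ that it preserves cofibrations and trivial cofibrations. Using the explicit presentation of $C\sotimes D$ by generators (the images of the morphisms of $C$ and of $D$, together with the new morphisms $\varepsilon(f;g_1,\dots,g_n)$) and the relations $(R_1)$--$(R_4)$, it should suffice to verify this on the generating (trivial) cofibrations of $\Cat_\dg$; the delicate point is that adjoining the $\varepsilon$-cells over $D$ preserves the relevant lifting properties, which reduces to a freeness/filtration argument on the hom-complexes of $C\sotimes D$ over those of $C$. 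Granting this, the adjunction descends to adjoint functors $\mathbb{L}(-\sotimes D)\dashv \mathbb{R}\,\Coh_\dg(D,-)$ on $\Hot$, and because $C$ is cofibrant one has $\mathbb{L}(-\sotimes D)(C)=C\sotimes D$.

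Next I would identify the derived right adjoint. The key claim is that, for $D$ cofibrant, the dg category $\Coh_\dg(D,E)$ of dg functors and coherent natural transformations is quasi-equivalent to $\RRHom(D,E)$, naturally in $E$, whence $\mathbb{R}\,\Coh_\dg(D,-)\cong\RRHom(D,-)$ on $\Hot$. This is the dg analogue of To\"en's identification of the $2$-categorical internal Hom with pseudo-natural transformations: the coherent natural transformations are designed so that their hom-complex $\Coh_\dg(D,E)(F,G)$ is a total complex of a bar-type resolution computing the derived morphism complex between the graph bimodules of $F$ and $G$, i.e.\ $\RRHom(D,E)(F,G)\simeq \mathbb{R}\Hom_{D^{\op}\otimes E}(\Gamma_F,\Gamma_G)$. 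Once $\mathbb{R}\,\Coh_\dg(D,-)\cong\RRHom(D,-)$ is established, uniqueness of left adjoints forces $\mathbb{L}(-\sotimes D)\cong -\otimes^{\mathbb{L}}D$ as endofunctors of $\Hot$; evaluating at $C$ then yields $C\sotimes D\cong C\otimes^{\mathbb{L}}D=C\otimes D$.

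The main obstacle is precisely this comparison $\Coh_\dg(D,E)\eqto\RRHom(D,E)$. One must show that the explicit complex of coherent natural transformations — assembled from the $\varepsilon$-data and governed by $(R_1)$--$(R_4)$ — computes the correct derived hom-complex between dg functors, and, crucially, that the quasi-equivalence is \emph{natural} in $E$, so that it is an equivalence of functors on $\Hot$ rather than a mere objectwise equivalence. I expect this to require filtering the coherent-transformation complex by the number of arguments $g_i$ and identifying the associated graded with a standard bar/cobar complex for the bimodule derived Hom, using cofibrancy of $D$ to guarantee the needed acyclicity. A secondary, more routine, obstacle is the left Quillen property of $-\sotimes D$, which hinges on the freeness of the generators of $C\sotimes D$ relative to those of $C$.
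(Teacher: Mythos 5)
Your overall architecture --- strict adjunction, derive it to $\Hot$, identify the derived right adjoint with $\RRHom(D,-)$, conclude by uniqueness of adjoints/Yoneda --- coincides with the paper's. The genuine divergence, and the gap, is in \emph{how} you derive the adjunction. You propose to show that $-\sotimes D$ is left Quillen by checking it on generating (trivial) cofibrations, and you classify this as a secondary, routine obstacle. Preservation of the generating cofibrations $s(n)\colon C(n)\to P(n)$ is indeed easy: the new generators $h\otimes\id_y$ and $\varepsilon(h;g_1,\dots,g_n)$ are attached freely, with differentials landing in the previous stage of the filtration. But preservation of the generating \emph{trivial} cofibrations $\underline{\k}\to\mathcal{K}$ (where $\mathcal{K}$ is built from the Drinfeld dg category with two objects) requires computing the homotopy type of $\mathcal{K}\sotimes D$, i.e.\ showing that the hom-complexes assembled from all the cells $\varepsilon(h;g_1,\dots,g_n)$ are acyclic where they should be. This is a bar-type computation over $D$ which is not obviously true for arbitrary $D$ --- the paper explicitly warns that $C\sotimes D$ need not be quasi-equivalent to $C\otimes D$ in general --- and nothing in your sketch addresses it. Equivalently, on the right-adjoint side you would need $\Coh_\dg(D,-)$ to preserve all fibrations and trivial fibrations; there the isomorphism-lifting condition (F2) and the convergence of the unbounded product-total Hochschild-type complex are exactly the serious points. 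This step, not the comparison with $\RRHom$, is the missing core of the proof.

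The paper deliberately proves something much weaker than a Quillen adjunction: only that, for $C,D$ cofibrant, the set-level bijection of Theorem \ref{prop1} is compatible with the \emph{right homotopy relation} computed with one specific path object. Concretely, it applies the adjunction to the Tabuada path object $\hat{E}$ and reduces everything to Key-Lemma \ref{keylemma}, asserting that $\Coh_\dg(D,\hat{E})$ is a path object of $\Coh_\dg(D,E)$ when $D$ is an $I$-cell complex; even this single instance of ``$\Coh_\dg(D,-)$ preserves a fibration'' requires (a) reducing the coherent-transformation complex to its two-term linear subcomplex via the $I$-cell filtration of $D$ (Lemma \ref{lemmaproofkl1}), and (b) a refined, explicit version of the proof that $\hat{C}$ is a path object (Lemmas \ref{lemmaf2} and \ref{lemmax}) allowing one to perturb the lifted component $g$ by an arbitrary coboundary. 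If you want to keep the Quillen-adjunction route, you must supply the computation of $\mathcal{K}\sotimes D$ or, equivalently, the full right-Quillen property of $\Coh_\dg(D,-)$; as written, the argument has a hole precisely at its load-bearing point. Your remaining step, the identification of $\Coh_\dg(D,E)$ with $\RRHom(D,E)$ for $D$ cofibrant, is fine in outline; the paper obtains it by citing Faonte's theorem together with the quasi-equivalence $\Coh_\dg(D,E)\hookrightarrow\Coh_{A_\infty}(D,E)$ of Lemma \ref{lemmastep0}, rather than by a direct bar-resolution argument.
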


Let us outline the main steps in the proof of Theorem \ref{theorem02}. 

The idea is to show that, for cofibrant $C,D$, \eqref{eqintro4} descends to an adjunction in the homotopy category $\Hot$:
\begin{equation}\label{eqintro5}
\Hot(C\sotimes D,E)=\Hot(C,\Coh_\dg(D,E))
\end{equation}
By a result of Faonte [Fa2], $\Coh_{A_\infty}(-,-)$ has the homotopy type of the derived Hom $\RRHom(-,-)$ of two small dg categories, introduced in [To], and for $D$ cofibrant, the dg categories $\Coh_{A_\infty}(D,E)$ and $\Coh_\dg(D,E)$ are isomorphic in $\Hot$. Then Theorem \ref{theorem02} follows from \eqref{eqintro5} and the fundamental result of To\"{e}n [To, Cor. 6.4] saying that $-\otimes D$ is the left adjoint 
to $\RRHom(D,-)$ in $\Hot$.

The most non-trivial part is to show that \eqref{eqintro4} descends to \eqref{eqintro5}. Our strategy is as follows.

We compute the homotopy relation in $\Cat_\dg(C,D)$, for $C$ cofibrant, as the right cylinder homotopy relation, with the path object $\hat{D}$ of $D$, introduced in [Tab2]. 
The adjunction \eqref{eqintro4} gives
\begin{equation}
\Cat_\dg(C\sotimes D,\hat{E})=\Cat_\dg(C,\Coh_\dg(D,\hat{E}))
\end{equation}
It is quite clear that $C\sotimes D$ is cofibrant if $C$ and $D$ are. 
Then everything reduces to
\begin{theorem}\label{theorem03}
Let $D,E$ be small dg categories, with $D$ cofibrant. Then $\Coh_\dg(D,\hat{E})$ is a path object of the dg category $\Coh_\dg(D,E)$.
\end{theorem}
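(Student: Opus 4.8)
The plan is to apply the functor $\Coh_\dg(D,-)$ to Tabuada's path-object factorization of the diagonal of $E$,
\[
E\xrightarrow{\ \sigma\ }\hat{E}\xrightarrow{\ (\partial_0,\partial_1)\ }E\times E,\qquad (\partial_0,\partial_1)\circ\sigma=\Delta_E,
\]
in which $\sigma$ is a quasi-equivalence and $(\partial_0,\partial_1)$ is a Tabuada fibration, and to verify that the resulting diagram $\Coh_\dg(D,E)\to\Coh_\dg(D,\hat{E})\to\Coh_\dg(D,E\times E)$ exhibits $\Coh_\dg(D,\hat{E})$ as a path object of $\Coh_\dg(D,E)$. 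Three things must be checked: that the target is $\Coh_\dg(D,E)\times\Coh_\dg(D,E)$ with the composite equal to the diagonal; that $(\partial_0,\partial_1)_*$ is a fibration; and that $\sigma_*$ is a quasi-equivalence. The first is immediate, since by Theorem~\ref{theorem01} the functor $\Coh_\dg(D,-)$ is right adjoint to $-\sotimes D$ and hence preserves products; applying a product-preserving functor to $(\partial_0,\partial_1)\circ\sigma=\Delta_E$ gives the diagonal of $\Coh_\dg(D,E)$. (Directly: a dg functor $D\to E\times E$ is a pair of dg functors, and a coherent natural transformation into $E\times E$ is a pair of coherent natural transformations.)

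For the fibration property I would verify Tabuada's two conditions for $(\partial_0,\partial_1)_*$. Recall that $\Coh_\dg(D,F)(\Phi,\Psi)$ is, as a graded space, an infinite product indexed by the chains $d_0\to\cdots\to d_n$ of composable morphisms of $D$, of spaces of the form $\Hom_\k\bigl(D(d_{n-1},d_n)\otimes\cdots\otimes D(d_0,d_1),\,F(\Phi d_0,\Psi d_n)\bigr)$, equipped with a twisted total differential. Since $\hat{E}\to E\times E$ is degreewise surjective on every hom-complex, and since $\Hom_\k(V,-)$ and arbitrary products preserve degreewise surjections of complexes of $\k$-vector spaces, $(\partial_0,\partial_1)_*$ is degreewise surjective on the coherent-transformation complexes; surjectivity concerns only the underlying graded maps and is unaffected by the twist of the differential. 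This gives the surjectivity condition. The remaining isofibration condition---lifting a coherent natural transformation that is invertible in $H^0$ downstairs to an invertible one upstairs---I would deduce from the isofibration property of $\hat{E}\to E\times E$: an object of $\Coh_\dg(D,\hat{E})$ is a dg functor, so is lifted on objects by the objectwise isofibration property, and the coherence components are filled in inductively along the chains of $D$, the invertibility being detected on the degree-zero component.

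The essential point, and the main obstacle, is that $\sigma_*$ is a quasi-equivalence. Quasi-essential surjectivity is the easy half: $\sigma$ is a quasi-equivalence with $\partial_0\sigma=\id_E$, so $\partial_0$ is a quasi-inverse and $\sigma\partial_0\cong\id_{\hat{E}}$ in $H^0$; whiskering this natural isomorphism with a functor $G\colon D\to\hat{E}$ shows $G\cong\sigma(\partial_0 G)$ in $H^0(\Coh_\dg(D,\hat{E}))$. The content is quasi-full-faithfulness: that for all $\Phi,\Psi$ the map
\[
\Coh_\dg(D,E)(\Phi,\Psi)\longrightarrow\Coh_\dg(D,\hat{E})(\sigma\Phi,\sigma\Psi)
\]
is a quasi-isomorphism. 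Here I would use that $\sigma$ is a quasi-isomorphism, hence (over a field) a chain homotopy equivalence, on every hom-complex $E(x,y)\to\hat{E}(\sigma x,\sigma y)$, and filter both coherent-transformation complexes by the chain length $n$. On the associated graded the map becomes a product, over chains, of the maps $\Hom_\k(D(d_{n-1},d_n)\otimes\cdots,E(\Phi d_0,\Psi d_n))\to\Hom_\k(D(d_{n-1},d_n)\otimes\cdots,\hat{E}(\sigma\Phi d_0,\sigma\Psi d_n))$; each is a chain homotopy equivalence because $\Hom_\k(V,-)$ is a dg functor and so preserves such equivalences, and a product of chain homotopy equivalences is again one. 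A comparison of the two filtration spectral sequences then yields the claim.

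I expect the genuine difficulty to lie not in any single conceptual step but in the convergence of this comparison for an infinite product totalization, and this is exactly where the cofibrancy of $D$ enters. For $D$ semifree one may restrict all the products to chains of the graded generating morphisms of $D$, and the semifree filtration on $D$ together with this restriction makes the chain-length filtration complete and suitably locally finite, so that both spectral sequences converge and the isomorphism on $E_1$-terms forces $\sigma_*$ to be a quasi-isomorphism on hom-complexes. The careful bookkeeping ensuring this convergence---passing from a general cofibrant $D$ to a semifree one by a retract argument, and controlling the product over chains of generators in each total degree---is, I expect, the principal obstacle.
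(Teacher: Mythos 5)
Your skeleton coincides with the paper's: apply $\Coh_\dg(D,-)$ to the Tabuada factorization $E\to\hat E\to E\times E$, note that products are preserved, check (F1) by degreewise surjectivity, prove that $\sigma_*$ is a quasi-equivalence, and isolate (F2). Your treatment of $\sigma_*$ is a legitimate alternative to the paper's route (the paper deduces it from the invariance of $\Coh_{A_\infty}(-,-)$ under quasi-equivalences together with Lemma \ref{lemmastep0}, rather than by a direct filtration argument); the convergence worry you raise is real but manageable, e.g.\ via the Milnor $\lim^1$ sequence for the tower of quotients by the chain-length filtration, and in fact quasi-fully-faithfulness of $\sigma_*$ does not require $D$ cofibrant at all.

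The genuine gap is in (F2), which is precisely the technical core of the paper (Key-Lemma \ref{keylemma}). An object of $\Coh_\dg(D,\hat E)$ is not merely ``a dg functor lifted objectwise'': unwinding the definition, it is a pair of dg functors $F,G\colon D\to E$ together with a coherent natural transformation $\Theta=(\theta,h,0,0,\dots)$ whose components in degrees $\ge 2$ vanish identically and whose degree-one component satisfies the \emph{strict} derivation identity \eqref{kl2}. By contrast, the isomorphism lift that the Kontsevich--Tabuada construction (Lemma \ref{lemmaf2}) produces inside the dg category $\Coh_\dg(D,E)$, namely $g=bfa'$, is a cup product of coherent transformations and in general has nonvanishing higher components, so it is \emph{not} an object of $\Coh_\dg(D,\hat E)$; likewise the objectwise isofibration property of $\hat E\to E\times E$ only produces lifts at each object of $D$ with no coherence between them. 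Your proposed ``inductive filling along chains, invertibility detected in degree zero'' is exactly the step that fails for general $D$: there is no reason a closed degree-zero coherent transformation invertible in $H^0$ should be cohomologous to one of the constrained linear form. This is where the cofibrancy of $D$ is indispensable --- not (only) in the spectral-sequence convergence where you located it: for $D$ an $I$-cell complex the inclusion $\Coh^\lin_\dg(D,E)(F,G)\hookrightarrow\Coh_\dg(D,E)(F,G)$ is a quasi-isomorphism (Lemma \ref{lemmaproofkl1}, via Hochschild cohomology of a free graded category, where one defines $h$ on generators and extends by \eqref{kl2}), and a separate computation (Lemma \ref{lemmax}) is then needed to verify that after replacing $g$ by a cohomologous linear $\bar g$ the corrected lift is still invertible in $H^0(\hat{\ })$. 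Neither ingredient appears in your proposal, so as written the fibration property is not established.
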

Theorem \ref{theorem03} may have an independent interest. We know from [Tab2] that $\widehat{\Coh_\dg(D,E)}$ is a path object, and one needs to prove the same thing for $\Coh_\dg(D,\hat{E})$. To prove it, we revisit the proof that $\hat{C}$ is a path object of $C$, given in [Tab2, Prop. 2.0.11]. We replace it by a more direct argument, which works as well in the refined situation. First of all, we consider the case when $D$ is an $I$-cell complex\footnote{The concepts of a relative $I$-cell complex and of a $I$-cell complex are the counterparts, for the case of dg categories, of the concepts of a relative semi-free map of dg algebras and of a semi-free dg algebra. See Section \ref{sectioncmcdg} for the definition.}. It is the most tricky part of the paper, see Key-Lemma \ref{keylemma}. 

\begin{remark}{\rm
Note that for the case of Gray product we started our discussion with it is true that for {\it any} two 2-categories $C,D$ the natural projection $C\times_G D\to C\times D$ is a weak equivalence, so these two 2-categories are isomorphic as objects of the homotopy category, see [L, Section 2].\\
In our situation, a similar property would seemingly hold if we considered an ``$A_\infty$ version'' $C\sotimes_\infty D$ of the category $C\sotimes D$. For this $A_\infty$ version $C\sotimes_\infty D$, the higher by $f$ elements $\varepsilon(f_1,\dots,f_m;\ g_1,\dots,g_n)$ are also added, and \eqref{eqsuper} is replaced by a sequence of higher $A_\infty$ equations.
The category $C\sotimes_\infty D$ has the following drawback: the adjunction \eqref{eqintro4} fails, and it is not replaced by any other adjunction (at least, outside of the world of $\infty$-categories).
}
\end{remark}
\subsection{\sc }
\subsubsection{}
This paper is the first one in a bigger project. 
In our next paper(s), we establish some associativity properties for $C\sotimes D$, and apply it to a construction of a contractible 2-operad in the sense of Batanin [Ba3,4,5], acting on the category of small dg categories. This contractible 2-operad is different from the one found in [T2], and has some fruiful applications. \\
The statement of Theorem \ref{theorem02} is needed to show that the constructed 2-operad is contractible.
\subsubsection{}
 Another, more general and less explicit, approach to the Gray-like product for different enrichments has been developed in [Ba1,2], [St]. 
\subsection{\sc }
The paper is organized as follows.

In Section 1, we recall the definition and some basic facts on coherent natural transformations.

In Section 2 we introduce our main object of study here: the twisted tensor products of small dg categories $C\sotimes D$. Theorem \ref{theorem01} is proved as Theorem \ref{prop1}. 

In Section 3 we recall some facts from closed model categories necessary for proofs of Theorem \ref{theorem02} and Theorem \ref{theorem03}. It is standard. The only exception is a more direct proof of a result of Tabuada [Tab2] on a path object of a dg category, given in Lemma \ref{lemmaf2}. We need this direct proof for a proof of Key-Lemma \ref{keylemma}, more specifically, for Lemma \ref{lemmax}.

Section 4 is the technical core of the paper. It is devoted to a proof of Theorem \ref{theorem02}, which figures as Theorem \ref{theorht}.  Theorem \ref{theorem03} is proved in Proposition \ref{propstep1}, with the most essential step made in Key-Lemma \ref{keylemma}.

\subsection*{\sc }

\subsection*{\sc Acknowledgements}
I am thankful to Michael Batanin for introducing me to the Gray product of 2-categories and for providing me with several related references. \\
I am thankful to Bernhard Keller for introducing me to dg categories (some 10 years ago) and for providing me with several references.\\
The work was partially supported by the FWO Research Project Nr. G060118N and 
by the Russian Academic Excellence Project `5-100'.

\vspace{2cm}

\section{\sc Coherent natural transformations}

\subsection{\sc Notations}
Throughout the paper, $\k$ denotes a field of any characteristic. 
For a graded vector space, we denote by $|v|$ the degree of a homogeneous element $v\in V$.

We denote by $C,D,E,\dots$ small dg categories over $\k$ (see [K2]).
The set of dg functors $F\colon C\to D$ is denoted by $\Fun_\dg(C,D)$. The set of $A_\infty$ functors $F\colon C\to D$ is denoted by $\Fun_{A_\infty}(C,D)$. The ordinary category whose objects are small dg categories over $\k$ and whose morphisms are dg functors is denoted by $\Cat_\dg(\k)$. 

For a category $\mathscr{C}$, we sometimes use the notation ``$X\in\mathscr{C}$'' meaning that $X$ is an object of $\mathscr{C}$.

For an abelian category $\mathscr{A}$, we denote by $\mathscr{A}^\udot$ the dg category whose objects are complexes of objects of $\mathscr{A}$, and whose $\Hom$'s are defined as the $\Hom$'s of the underlying graded abelian groups, with the differential acting on it as $d(\phi)(x)=d(\phi(x))-(-1)^{|\phi|}\phi(dx)$.

With a dg category $C$ over $\k$ is associated a $\k$-linear category $H^0(C)$. It has the same objects as $C$, and $H^0(C)(x,y)=H^0(C(x,y))$.

A dg functor $F\colon C\to D$ is called a {\it quasi-equivalence} if (a) the map $F\colon C(x,y)\to D(F(x),F(y))$ is a quasi-isomorphism of complexes, and (b) the corresponding functor $H^0(F)\colon H^0(C)\to H^0(D)$ is an equivalence of $\k$-linear categories. 

The category $\Cat_\dg(\k)$ of small dg categories over $\k$ admits a closed model structure, whose weak equivalences are quasi-equivalences. It is due to Tabuada [Tab1]. We discuss this closed model structure in more detail in Section \ref{sectioncmcdg} below.

\subsection{\sc The definition}
We recall the definition of a {\it coherent natural transformation} $F\Rightarrow G\colon C\to D$, where $C,D$ are small dg categories over $\k$, and $F,G$ are dg (resp., $A_\infty$) functors $C\to D$. 

Let $C,D\in \Cat_\dg(\k)$, and let $F,G\colon C\to D$ be dg functors. Associate with $(F,G)$ a cosimplicial set $\coh_\ldot(F,G)$, as follows.

Set $$\coh_0(F,G)=\prod_{X\in C}\Hom_D(F(X),G(X))$$ and 
\begin{equation}
\coh_n(F,G)=\prod_{X_0,X_1,\dots,X_n\in C}\underline{\Hom}_\k\Big(C(X_{n-1},X_n)\otimes \dots\otimes C(X_0,X_1),\ D(F(X_0),G(X_n)\Big)
\end{equation}
where $\underline{\Hom}_\k$ stands for the enriched over dg vector spaces inner Hom.

The coface maps 
$$
d_0,\dots,d_{n+1}\colon \coh_n(F,G)\to\coh_{n+1}(F,G)
$$
and the codegeneracy maps
$$
\eta_0,\dots,\eta_n\colon \coh_{n+1}(F,G)\to\coh_n(F,G)
$$
are defined in the standard way, see e.g. [T2, Sect. 3].

For example, recall the coface maps $d_0,d_1,d_2\colon \coh_1(F,G)\to \coh_2(F,G)$. For $$\Psi\in \prod_{X_0,X_1\in C}
\underline{\Hom}_\k\big(\Hom_C(X_0,X_1),\ \Hom_D(F(X_0),G(X_1)\big)
$$
one has:
\begin{equation}
\begin{aligned}
\ &d_0(\Psi)(X_0\xrightarrow{f}X_1\xrightarrow{g}X_2)=G(X_1\xrightarrow{g}X_2)\circ \Psi(X_0\xrightarrow{f}X_1)\\
&d_1(\Psi)(X_0\xrightarrow{f}X_1\xrightarrow{g}X_2)=\Psi(X_0\xrightarrow{gf}X_2)\\
&d_2(\Psi)(X_0\xrightarrow{f}X_1\xrightarrow{g}X_2)=\Psi(X_1\xrightarrow{g}X_2)\circ F(X_0\xrightarrow{f}X_1)
\end{aligned}
\end{equation}

We set:
\begin{equation}
\Coh(F,G)=\int_{[n]\in\Delta}\underline{\Hom}_{\k}(C_\udot(\k\Delta(-,[n])),\coh_n(F,G))
\end{equation}
where $\k\Delta(-,n)$ is a simplicial vector space, and $C_\udot(-)$ stands for its normalised Moore complex, and $\int_{-}$ denotes the end.
One easily shows that the complex $\Coh(F,G)$ is isomorphic, up to signs in the differential, to the product-total complex of the cochain complex $\Tot_\Pi(C^\udot(\coh_\udot(F,G)))$.

Alternatively, $\Coh(F,G)$ can be defined as the Hochschild cochain complex of $C$ with coefficients in the $C$-bimodule $\Hom_D(F(-),G(-))$.

The definition of $\Coh(F,G)$ can be upgraded for the case when $F,G\colon C\to D$ are $A_\infty$ functors, see [LH, Ch. 8]. 

One defines two dg categories, associated with a pair $C,D$ of small dg categories over $\k$, $\Coh_\dg(C,D)$ and $\Coh_{A_\infty}(C,D)$.

The dg category $\Coh_\dg(C,D)$ has the dg functors $F\colon C\to D$ as its objects, and
$$
\Coh_\dg(C,D)(F,G):=\Coh(F,G)
$$
as its $\Hom$-complexes.

The dg category $\Coh_{A_\infty}(C,D)$ has the $A_\infty$ functors $C\to D$ as its objects and
$$
\Coh_{A_\infty}(C,D)(F,G):=\Coh(F,G)
$$
as its $\Hom$-complexes.

The construction of $\Coh_*(C,D)$ is functorial with respect to dg (corresp.) $A_\infty$ functors $f\colon C_1\to C$ and $g\colon D\to D_1$, and gives rise to dg (corresp., $A_\infty$) functors
$$
f^*\colon\Coh_*(C,D)\to \Coh_*(C_1,D),\ \ g_*\colon \Coh_*(C,D)\to\Coh_*(C,D_1)
$$
where $*=\dg$ (corresp., $*=A_\infty$).

The following result has a fundamental value:
\begin{prop}\label{fquis}
Let dg functors $f\colon C_1\to C$ and $g\colon D\to D_1$ be quasi-equivalences. Then the dg functors $f^*\colon \Coh_{A_\infty}(C,D)\to\Coh_{A_\infty}(C_1,D)$ and $g_*\colon \Coh_{A_\infty}(C,D)\to\Coh_{A_\infty}(C,D_1)$ are quasi-equivalences. 
\end{prop}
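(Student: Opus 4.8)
The plan is to verify, for each of the two dg functors $f^*$ and $g_*$, the two defining properties of a quasi-equivalence: quasi-full-faithfulness on $\Hom$-complexes, and essential surjectivity of the induced functor on $H^0$. Throughout I use the description of the $\Hom$-complex $\Coh(F,G)$ as the product-total complex $\Tot_\Pi$ of the cosimplicial object $\coh_\udot(F,G)$, equipped with the decreasing filtration by cosimplicial degree. In each total degree $\Tot_\Pi$ is the product over the cosimplicial degrees $n\ge 0$, the tower of filtration quotients has surjective transition maps (so $\lim^1$ vanishes and the filtration is complete), and the filtration is bounded above, equal to the whole complex in non-positive filtration degree because cosimplicial degrees are non-negative. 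Hence, by the comparison theorem for complete filtrations, a map which is a quasi-isomorphism on each associated-graded column is a quasi-isomorphism on $\Tot_\Pi$. When $F,G$ are merely $A_\infty$ functors the differential on $\Coh(F,G)$ carries additional terms built from the $A_\infty$-structure maps of $D$ and of $F,G$; none of these raises the cosimplicial degree, so the associated-graded ($E_0$) complex in degree $n$ is $\coh_n(F,G)$ with its internal differential exactly as in the dg case, and all column comparisons below are unaffected.

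For the post-composition functor $g_*$ the full-faithfulness is the easy half. Here $g_*\colon\Coh(F,G)\to\Coh(gF,gG)$ is induced by the map of coefficients $\Hom_D(F(X),G(Y))\to\Hom_{D_1}(gF(X),gG(Y))$, which is a quasi-isomorphism for every pair $X,Y$ by quasi-full-faithfulness of $g$. At a fixed cosimplicial degree $n$ the column is a product of inner $\Hom$'s $\underline\Hom_\k\big(C(X_{n-1},X_n)\otimes\cdots\otimes C(X_0,X_1),\,-\big)$ applied to these coefficients; since we work over a field every complex is homotopy-projective, so $\underline\Hom_\k(V,-)$ preserves quasi-isomorphisms, and arbitrary products of quasi-isomorphisms of complexes of $\k$-vector spaces are again quasi-isomorphisms. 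Thus $g_*$ is a column-wise quasi-isomorphism, and the filtration argument gives full-faithfulness.

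For the restriction functor $f^*$ the column-wise argument \emph{fails}: $f^*$ reindexes the product over $\Ob(C)$ by the product over $\Ob(C_1)$ and replaces the tensor factors $C(-,-)$ by $C_1(-,-)$, so at a fixed cosimplicial degree it is not a quasi-isomorphism once $f$ is not bijective on objects. I would therefore factor $f$, up to isomorphism, as $C_1\xrightarrow{f'}C'\hookrightarrow C$, where $C'\subseteq C$ is the full dg subcategory on the objects $f(X')$: the inclusion $C'\hookrightarrow C$ is fully faithful and, by essential surjectivity of $f$, essentially surjective, while $f'$ is surjective on objects and quasi-fully-faithful. For a quasi-equivalence that is \emph{bijective on objects} the object-tuples match up and at each cosimplicial degree $f^*$ becomes, tuple-by-tuple, precomposition in the first argument of $\underline\Hom_\k$ with the Hom-quasi-isomorphisms $C_1(-,-)\xrightarrow{\sim}C(-,-)$; over a field these are homotopy equivalences, so the column map is a quasi-isomorphism and the filtration argument applies. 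The remaining, and genuinely hard, case is the fully faithful essentially surjective inclusion $C'\hookrightarrow C$, where restriction forgets the object-tuples meeting $\Ob(C)\setminus\Ob(C')$; here I would invoke the invariance of Hochschild cohomology under quasi-equivalence, reading $\Coh(F,G)$ as $\RHom$ of diagonal bimodules over the enveloping dg category and using that a quasi-equivalence induces a derived Morita equivalence of bimodule categories matching the diagonal bimodules (via $C_1\xrightarrow{\sim}f^*C$, which comes from quasi-full-faithfulness). This Morita/essential-surjectivity step, which encodes the homotopy retraction of each object of $C$ onto an object of $C'$, is the main obstacle.

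Finally, essential surjectivity of both $H^0(f^*)$ and $H^0(g_*)$ follows from the existence of $A_\infty$ quasi-inverses of quasi-equivalences ([LH]). If $g'\colon D_1\to D$ is a quasi-inverse of $g$ with $gg'\simeq\id_{D_1}$, then for any $A_\infty$ functor $H\colon C\to D_1$ the object $g'H$ satisfies $g_*(g'H)=gg'H\cong H$ in $H^0\Coh_{A_\infty}(C,D_1)$, the isomorphism being the whiskering of $gg'\simeq\id$ with $H$; symmetrically, if $f'$ is a quasi-inverse of $f$ with $f'f\simeq\id_{C_1}$, then $Hf'$ satisfies $f^*(Hf')=Hf'f\cong H$. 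Assembling full-faithfulness and essential surjectivity then gives that $f^*$ and $g_*$ are quasi-equivalences.
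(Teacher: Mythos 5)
Your reduction of $g_*$, and of the objects-bijective part of $f^*$, to a column-wise comparison of filtered product-total complexes is sound, and your essential-surjectivity argument via $A_\infty$ quasi-inverses from [LH] is exactly right. But the step you yourself flag as ``the main obstacle'' --- quasi-fully-faithfulness of restriction along the fully faithful, essentially surjective inclusion $C'\hookrightarrow C$ --- is a genuine gap, not a routine citation. What you need there is that for the bimodule $M=\Hom_D(F(-),G(-))$ the restriction $\Hoch^\udot(C,M)\to\Hoch^\udot(C',M|_{C'})$ is a quasi-isomorphism; ``invariance of Hochschild cohomology under derived Morita equivalence'' is normally stated for the derived functor $\RHom$ over the enveloping category, so to apply it you must in addition check that the bar-type complex $\Coh(F,G)$ computes that derived functor when $F,G$ are $A_\infty$ functors, and that your restriction map is identified with the comparison map of the Morita equivalence on this (non-diagonal) bimodule. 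None of this is hard over a field, but it is precisely the mathematical content of the statement for $f^*$, and you have only named it. A small wording slip as well: the extra $A_\infty$ terms in the differential \emph{strictly raise} the cosimplicial degree --- that is why they disappear in the associated graded; as written, ``none of these raises the cosimplicial degree'' would put them into the $E_0$ columns and break the comparison you rely on.

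For contrast, the paper's proof is entirely formal and avoids the filtration analysis: $\Coh_{A_\infty}(-,-)$ is bifunctorial with respect to $A_\infty$ functors [LH, Ch.\ 8], and by [LH, Theorem 9.2.0.4] a quasi-equivalence can be inverted as an $A_\infty$ functor; composing with such an inverse produces quasi-inverses to $f^*$ and to $g_*$ up to homotopy, so both are quasi-equivalences. In other words, the paper promotes the device you use only for essential surjectivity into the whole argument, and this is exactly what disposes of the hard $f^*$ case. Your computation does buy something the paper's argument does not --- an explicit identification of the induced map on associated graded pieces, hence a proof of the $g_*$ and objects-bijective cases independent of the invertibility theorem --- but as a proof of the proposition it remains incomplete until the Morita step is actually carried out.
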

It is proven in [LH, Ch.8] that $\Coh_{A_\infty}(C,D)$ is bi-functorial with respect to the $A_\infty$ functors. It follows from [LH, Theorem 9.2.0.4] that a weak equivalence can be inverted as an $A_\infty$ functor. The statement follows from these two results.

\qed

\subsection{\sc An adjunction}
The category $\Cat_\dg(\k)$ admits a Quillen closed model structure, whose weak equivalences are quasi-equivalences of dg categories  [Tab1]. 
We recall fibrations and cofibrations of this closed model structure in Section \ref{sectioncmcdg} below.

Denote by $\Hot$ the homotopy category of dg categories, which is defined as the localization of $Cat^\dg(\k)$ by weak equivalences.

B. To\"{e}n proved that $\Hot$ is a symmetric closed category, whose external Hom is denoted by $\RRHom(C,D)$. It is a dg category whose objects are {\it quasi-functors} and whose morphisms are their derived maps, see [To].
For two dg categories $C,D$ over $\k$, denote by $C\otimes D$ their {\it tensor product} over $\k$. Its objects are $\Ob(C)\times \Ob(D)$, and 
$$
\Hom_{C\otimes D}(x\times x_1, y\times y_1):=\Hom_C(x,y)\otimes_\k\Hom_D(x_1,y_1)
$$

The following fundamental adjunction is proven in [To, Cor. 6.4]:
\begin{theorem}\label{theortoen}
For $C,D,E$ in $\Cat_\dg(\k)$, one has
\begin{equation}
\Hot(C\otimes D,E)=\Hot(C,\RRHom(D,E))
\end{equation}
\end{theorem}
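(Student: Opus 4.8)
The plan is to prove the adjunction by first equipping $\Hot$ with its derived monoidal structure and then modeling the internal Hom $\RRHom(D,E)$ Morita-theoretically, rather than by trying to derive the naive point-set adjunction. At the level of the ordinary category $\Cat_\dg(\k)$ there is already a closed structure: dg functors $C\otimes D\to E$ correspond bijectively to dg functors $C\to\mathcal{H}om(D,E)$, where $\mathcal{H}om(D,E)$ has dg functors $D\to E$ as objects and strict dg natural transformations as morphisms. The difficulty is that $\mathcal{H}om(D,E)$ has the wrong homotopy type, since its morphism complexes see only strict transformations rather than coherent (derived) ones, so this bijection does not descend to $\Hot$. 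The role of $\RRHom$ is exactly to correct this, and the main work is to produce a homotopically meaningful internal Hom and to check the adjunction on its derived mapping spaces.

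First I would control the derived tensor product. Because $\otimes$ does not make $\Cat_\dg(\k)$ into a monoidal model category (the pushout-product and unit axioms fail), I cannot simply invoke a Quillen adjunction. Instead I would show that tensoring with a cofibrant dg category preserves quasi-equivalences: a cofibrant $D$ is a retract of an $I$-cell complex, whose Hom-complexes are semifree and hence cofibrant, in particular flat, dg $\k$-modules, so $-\otimes D$ carries quasi-isomorphisms of Hom-complexes to quasi-isomorphisms and is compatible with passage to $H^0$. Thus for $C$ or $D$ cofibrant, $C\otimes D$ computes the left derived tensor product $\otimes^{\mathbb{L}}$, which is then associative and symmetric on $\Hot$. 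Since every object of $\Hot$ is isomorphic to a cofibrant one, it suffices to establish the bijection for $C,D$ cofibrant.

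Next I would give the Morita model of the internal Hom. A $(D,E)$-bimodule is a dg functor $D^{\op}\otimes E\to\mathscr{C}(\k)$, and these assemble into the derived category $\mathscr{D}(D^{\op}\otimes^{\mathbb{L}}E)$, which admits a cofibrant–fibrant dg enhancement $\mathrm{Int}(D^{\op}\otimes^{\mathbb{L}}E)$. I would define $\RRHom(D,E)$ as the full sub-dg-category of $\mathrm{Int}(D^{\op}\otimes^{\mathbb{L}}E)$ spanned by the right quasi-representable modules, namely those $M$ for which each $M(d,-)$ is quasi-isomorphic to a representable $E$-module $E(-,e)$. Such a module is precisely a quasi-functor $D\to E$, so $H^0(\RRHom(D,E))$ is the category of quasi-functors and their derived natural transformations; functoriality in $D$ and $E$ comes from restriction and derived tensor of bimodules. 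This identification coincides with To\"{e}n's computation of the mapping space $\mathrm{Map}(D,E)$ as the nerve of the category of weak equivalences of quasi-representable bimodules, and it is the technical heart of the argument.

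Finally I would verify the bijection by comparing mapping spaces. Using associativity of $\otimes^{\mathbb{L}}$ together with the bimodule description, I would identify both $\mathrm{Map}(C\otimes^{\mathbb{L}}D,E)$ and $\mathrm{Map}(C,\RRHom(D,E))$ with the nerve of the category of weak equivalences of right quasi-representable modules over $C^{\op}\otimes^{\mathbb{L}}D^{\op}\otimes^{\mathbb{L}}E$ (quasi-representable in the $E$-variable after fixing objects in $C$ and $D$), naturally in $C$ and $E$. Passing to $\pi_0$ then yields the desired bijection $\Hot(C\otimes D,E)=\Hot(C,\RRHom(D,E))$. The main obstacle is exactly this Morita computation of the two mapping spaces: the failure of the monoidal model axioms rules out a formal Quillen-adjunction argument and forces the bimodule detour, so that establishing \emph{naturality} of the comparison, rather than merely a bijection object-by-object, becomes the delicate point.
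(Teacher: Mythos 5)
This theorem is not proved in the paper at all: it is quoted verbatim as an external input, with the proof deferred entirely to [To, Cor.\ 6.4], and the \qed\ records only the citation. What you have written is, in effect, a faithful reconstruction of the strategy of the cited reference rather than an alternative to anything in this paper. Your outline is accurate on the essential points: the strict internal Hom $\mathcal{H}om(D,E)$ gives a point-set adjunction that fails to descend because its morphism complexes are not homotopy invariant; $\otimes$ is not part of a monoidal model structure on $\Cat_\dg(\k)$, so no formal Quillen-adjunction argument is available; the correct $\RRHom(D,E)$ is the dg category of right quasi-representable $D^{\op}\otimes^{\mathbb{L}}E$-modules inside a cofibrant--fibrant enhancement of the derived category of bimodules; and the adjunction is verified by identifying both mapping spaces with nerves of categories of weak equivalences of suitable quasi-representable trimodules and then applying $\pi_0$. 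Two remarks. First, since the paper works over a field, the flatness discussion for $-\otimes D$ is automatic and $C\otimes D$ always computes $\otimes^{\mathbb{L}}$; the retract-of-$I$-cell argument is only needed over a general base ring. Second, and more substantively, the entire weight of the proof sits in the two steps you label the ``technical heart'': the computation of $\mathrm{Map}(C,D)$ as the nerve of weak equivalences of right quasi-representable bimodules (To\"{e}n's Theorem 4.2, which rests on the strictification of diagrams of dg modules) and the natural identification of quasi-representable $\RRHom(D,E)$-modules with the corresponding trimodules (his Theorem 6.1). As written these are asserted, not established, so your text is a correct road map of the proof rather than a proof; for the purposes of this paper that is exactly the status the statement has anyway.
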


\qed

Faonte proved in [Fa2, Th. 1.7] the following result, linking Theorem \ref{theortoen} with $\Coh(C,D)$:
\begin{theorem}\label{theoremfa}
For two small dg categories $C,D$ over $\k$, there is an isomorphism in $\Hot$:
\begin{equation}
\RRHom(C,D)\simeq\Coh_{A_\infty}(C,D)
\end{equation}
Consequently, one has:
\begin{equation}\label{adjtf}
\Hot(C\otimes D,E)=\Hot(C,\Coh_{A_\infty}(D,E))
\end{equation}
\end{theorem}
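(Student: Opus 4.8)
The plan is to separate the two assertions: the consequence \eqref{adjtf} is formal, while the isomorphism $\RRHom(C,D)\simeq\Coh_{A_\infty}(C,D)$ in $\Hot$ carries all the content. Granting the latter, \eqref{adjtf} follows at once by substituting $\RRHom(D,E)\simeq\Coh_{A_\infty}(D,E)$ into Toën's adjunction (Theorem \ref{theortoen}):
\begin{equation*}
\Hot(C\otimes D,E)=\Hot(C,\RRHom(D,E))=\Hot(C,\Coh_{A_\infty}(D,E)),
\end{equation*}
the last step being legitimate because $\Hot(C,-)$ carries isomorphisms of $\Hot$ to bijections. So the task reduces to the isomorphism.

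To produce the isomorphism I would use the bimodule model of $\RRHom$. Recall from [To] that $\RRHom(C,D)$ is modeled, up to quasi-equivalence, by the dg category of right quasi-representable $C$-$D$-bimodules (the \emph{quasi-functors} $C\to D$), the morphisms being the derived bimodule homomorphisms $\RHom_{C\text{-}D}(-,-)$. The bridge from $A_\infty$-functors is the assignment $F\mapsto M_F$, where $M_F$ is the $C$-$D$-bimodule with
\begin{equation*}
M_F(x,y)=\Hom_D(y,F(x)),
\end{equation*}
equipped with the left $A_\infty$-action induced by $F$; this bimodule is right quasi-representable by construction, its restriction to each $x$ being the representable module $\Hom_D(-,F(x))$. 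I would then check that $F\mapsto M_F$ is a quasi-equivalence onto the quasi-functor model, which amounts to two points.

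\emph{Essential surjectivity.} Every right quasi-representable bimodule is quasi-isomorphic to some $M_F$ with $F$ an $A_\infty$-functor. This rests on the invertibility of quasi-equivalences in the $A_\infty$-world, [LH, Thm 9.2.0.4], the same input used in Proposition \ref{fquis}. \emph{Full faithfulness on morphism complexes.} Here I would exploit the description recorded above of $\Coh(F,G)$ as the Hochschild cochain complex of $C$ with coefficients in the $C$-bimodule $\Hom_D(F(-),G(-))$. Resolving $M_F$ by the two-sided bar construction identifies the derived bimodule maps $\RHom_{C\text{-}D}(M_F,M_G)$ with precisely this Hochschild complex, yielding the desired quasi-isomorphism $\Coh(F,G)\simeq\RHom_{C\text{-}D}(M_F,M_G)$.

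The hard part will be the full-faithfulness comparison together with the $A_\infty$-sign bookkeeping: one must verify that the differential on $\Coh(F,G)$, assembled from the cosimplicial object $\coh_\bullet(F,G)$ through the end and total-complex construction, matches the differential on $\RHom_{C\text{-}D}(M_F,M_G)$ under the bar identification, including the higher terms that appear when $F,G$ are genuinely $A_\infty$ rather than strict dg functors. This is exactly the content developed in [LH, Ch. 8], so the most economical route is to invoke that identification rather than to reprove it; combined with Proposition \ref{fquis}, which guarantees that both sides are homotopy invariant and hence that the comparison may be carried out on convenient models, it delivers $\Coh_{A_\infty}(C,D)\simeq\RRHom(C,D)$ in $\Hot$.
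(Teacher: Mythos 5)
Your derivation of \eqref{adjtf} from the main isomorphism is exactly what the paper intends: substitute $\RRHom(D,E)\simeq\Coh_{A_\infty}(D,E)$ into To\"{e}n's adjunction (Theorem \ref{theortoen}) and use that $\Hot(C,-)$ preserves isomorphisms. The difference lies in how the main isomorphism $\RRHom(C,D)\simeq\Coh_{A_\infty}(C,D)$ is handled: the paper does not prove it at all, but quotes it verbatim as Faonte's theorem [Fa2, Th.~1.7] (the proof body is just a citation and a \textsc{qed}), whereas you sketch an actual argument. Your sketch --- the bimodule $M_F(x,y)=\Hom_D(y,F(x))$, right quasi-representability by construction, essential surjectivity via the invertibility of $A_\infty$ quasi-equivalences [LH, Thm 9.2.0.4], and full faithfulness by identifying $\RHom_{C\text{-}D}(M_F,M_G)$ with the Hochschild complex $\Coh(F,G)$ through the two-sided bar resolution --- is essentially the strategy of the cited external proof, and the ingredients you name are the right ones (note that working over a field, as the paper does, is what lets you use $C\otimes D$ itself rather than a cofibrant replacement when forming the bimodule category). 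So there is no gap relative to what the paper establishes; you have simply unpacked a black box that the paper leaves closed, and the honest caveat you give --- that the sign bookkeeping and the comparison of differentials is the real content and must be taken from [LH, Ch.~8] --- is precisely why the paper outsources the statement rather than reproving it.
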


\qed

Our first result ``refines'' the adjunction \eqref{adjtf} to the situation when the category $\Hot$ is replaced by $\Cat_\dg(\k)$.
Namely, we provide a construction of a dg category $C\sotimes D$, called {\it the twisted tensor product} of $C$ and $D$, such that the following adjunction holds:
\begin{equation}\label{adjb}
\Fun_\dg(C\sotimes D,E)=\Fun_\dg(C,\Coh_\dg(D,E))
\end{equation}
\comment
Note an essential difference in the categories $\Coh_*(D,E)$ in \eqref{adjtf} and in \eqref{adjb}:
there stands $\Coh_{A_\infty}$ in \eqref{adjtf}, and $\Coh_\dg$ in \eqref{adjb}.

Thus one should impose some conditions which guarantee that the two $\Coh_*(D,E)$ are isomorphic, as objects of $\Hot$. 
\endcomment

\comment
\subsection{\sc The functor $L$}\label{sectionl}
Here we construct a special cofibrant replacement functor $L\colon\Cat^\dg(\k)\to\Cat^\dg(\k)$, which plays an important role in the paper. It is a dg categorical version of the (simplicial) dg nerve construction, see ???. 

Let $C$ be a small dg category with a set of objects $S$. 
The small category $L(C)$ has the set of objects $S$, and the morphisms are defined as follows.

The underlying graded category of $L(C)$ is the free $\k$-linear category generated by the following graph $\Gamma(C)$.
The graph $\Gamma(C)$ contains a morphism $c(f)$ of degree $|f|$, for each homogeneous morphism $f$ in $C$, a morphism $c({f_1,f_2})\in \Hom^{|f_1|+|f_2|-1}(X_0,X_2)$, for each pair of homogeneous morphisms $f_0\in\Hom(X_0,X_1)$ and $f_1\in\Hom(X_1,X_2)$, a morphism $c({f_1,f_2,f_3})\in \Hom^{|f_1|+|f_2|+|f_3|-2}(X_0,X_3)$, for all homogeneous $f_1\in\Hom(X_0,X_1)$, $f_2\in\Hom(X_1,X_2)$, $f_3\in \Hom(X_2,X_3)$, and so on. 

For $n$ composable homogeneous morphisms $f_1\in \Hom^{|f_1|}(X_0,X_1),\dots,f_n\in\Hom^{|f_n|}(X_{n-1},X_n)$, $\Gamma(C)$ contains a homogeneous morphism $c(f_1,\dots,f_n)\in\Hom^{|f_1|+\dots+|f_n|-n+1}(X_0,X_n)$.
The morphisms $c(f_1,\dots,f_n)$ are supposed to be $\k$-linear in each argument.

The differential $d(c(f)):=c(d(f))$, and for $n\ge 2$:
\begin{equation}\label{ldif}
\begin{aligned}
\ &d(c({f_1,\dots,f_n})):=\sum_i\pm c({f_{i+1},\dots,f_n})\cdot c({f_1,\dots,f_i})   \pm\\
&\sum \pm c({f_1,\dots,f_{i-1},\  f_{i+1}\cdot f_i,\  f_{i+2},\dots, f_n})+\sum\pm c({f_1,\dots,df_i, \dots,f_n})
\end{aligned}
\end{equation}

 For $n=2$, \eqref{ldif} reads:
 \begin{equation}
 d(c(f_1,f_2))+c(df_1,f_2)+(-1)^{|f_1|}c(f_1,df_2)=c(f_2)c(f_1)-c(f_2f_1)
 \end{equation}

The dg category $L(C)$ is defined as the free dg category whose objects are $\Ob(C)$, whose morphisms are freely generated by the graph $\Gamma(C)$ described above, and with the differential of the generators given in \eqref{ldif}.

Consider the ordinary category $\Cat_\dg_\infty(\k)$ whose objects are small dg categories ober $\k$ and whose $\Hom$-sets are all $A_\infty$-functors. The set of all $A_\infty$ functors is denoted by $\Fun_{A_\infty}(-,-)$.

One has:
\begin{lemma}
The assignment $C\rightsquigarrow L(C)$ gives rise to a functor $L\colon \Cat_\dg_\infty(\k)\to\Cat^\dg(\k)$. It is left adjoint to the ``forgetful'' functor $R\colon \Cat_\dg(\k)\to\Cat^\dg_\infty(\k)$:
\begin{equation}\label{adjl}
\Fun_\dg(L(C),D)=\Fun_{A_\infty}(C,D)
\end{equation}
\end{lemma}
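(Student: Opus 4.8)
The plan is to establish the hom-set bijection (\ref{adjl}) directly for fixed $C$ and $D$, and then to obtain both the functoriality of $L$ and the adjunction with $R$ from the standard representability formalism. The starting point is that $L(C)$ is, by construction, a \emph{free} dg category: its underlying graded category is freely generated by the graph $\Gamma(C)$, so a graded functor $L(C)\to D$ is nothing but a map $\Ob(C)\to\Ob(D)$ together with a choice, for each chain of composable homogeneous morphisms $f_1,\dots,f_n$ in $C$, of an image of the generator $c(f_1,\dots,f_n)$ in $\Hom_D^{|f_1|+\dots+|f_n|-n+1}$, $\k$-linear in each argument. No compatibility with composition is imposed, precisely because the products $c(f_{i+1},\dots,f_n)\cdot c(f_1,\dots,f_i)$ are again among the free generators of the target.

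First I would record that $L(C)$ is a genuine dg category, i.e. that the differential (\ref{ldif}) squares to zero. This is the familiar verification that the cobar-type differential is a differential; it rests on the associativity of composition in $C$ together with $d^2=0$ and the Leibniz rule in $C$, and it is exactly what fixes the signs $\pm$ in (\ref{ldif}).

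The heart of the argument is to identify, under the free description above, the dg functors $F\colon L(C)\to D$ with the $A_\infty$ functors $C\to D$. Given a graded functor $F$, set $\Phi_0=F|_{\Ob}$ and, for $n\ge 1$, $\Phi_n(f_n\otimes\cdots\otimes f_1):=F(c(f_1,\dots,f_n))$, a map of degree $1-n$ (the order reversal matches the convention in $\coh_n$). The assignment $F\mapsto(\Phi_n)_{n\ge 0}$ is a bijection between graded functors and sequences of such multilinear maps. It remains to see that $F$ commutes with the differential exactly when the $\Phi_n$ satisfy the $A_\infty$ functor equations. Applying $F$ to (\ref{ldif}) and using that $F$ is a graded functor, the condition $d(F\,c(f_1,\dots,f_n))=F(d\,c(f_1,\dots,f_n))$ becomes
\begin{equation*}
\begin{aligned}
d\bigl(\Phi_n(f_n,\dots,f_1)\bigr)={}&\sum_{i}\pm\,\Phi_{n-i}(f_n,\dots,f_{i+1})\cdot\Phi_i(f_i,\dots,f_1)\\
&+\sum_i\pm\,\Phi_{n-1}(\dots,f_{i+1}f_i,\dots)+\sum_i\pm\,\Phi_n(\dots,df_i,\dots),
\end{aligned}
\end{equation*}
which is precisely the $n$-th $A_\infty$ functor equation for a map into a dg category, whose only nonzero structure maps are $m_1=d$ and $m_2$ (composition): the products correspond to $m_2$ on the target, the middle sum to $m_2$ on the source, and the last sum to $m_1$ on the source. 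Matching the Koszul signs of (\ref{ldif}) with those of the $A_\infty$ functor equations is the only delicate point, and it is the main technical obstacle of the proof — indeed it is the reason the signs in (\ref{ldif}) are chosen as they are.

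Finally, the bijection is natural in $D$: for a dg functor $g\colon D\to D'$, post-composition $g\circ(-)$ acts compatibly on both sides, since a dg functor is in particular an $A_\infty$ functor with vanishing higher components, so $g\circ F$ is matched with $g\circ\Phi$ (with $(g\circ\Phi)_n=g_1\circ\Phi_n$). Hence, for each fixed $C$, the functor $D\mapsto\Fun_{A_\infty}(C,D)$ is represented by $L(C)$. By the pointwise criterion for the existence of a left adjoint, $L$ extends uniquely to a functor on $A_\infty$ morphisms that is left adjoint to $R$: concretely, for an $A_\infty$ functor $\phi\colon C'\to C$ the dg functor $L(\phi)$ is the one corresponding under the bijection to $\eta_C\circ\phi$, where the unit $\eta_C\colon C\to RL(C)$ is the $A_\infty$ functor matched with $\id_{L(C)}$. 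Functoriality of $L$ and the triangle identities are then automatic.
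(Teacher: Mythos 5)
Your argument is correct, and it is the standard one. The paper itself offers no proof of this lemma at all --- the statement is followed immediately by an end-of-proof mark, with the verification left to the reader (moreover the whole subsection containing it is suppressed from the compiled text) --- so there is nothing to compare your proof against except the paper's implicit point of view, which surfaces later in Lemma \ref{lemmastep0}: there the author identifies an $A_\infty$ functor $C\to D$ with a dg functor $\Cobar(\Bar(C))\to D$, citing Lef\`{e}vre-Hasegawa, and $L(C)$ is exactly $\Cobar(\Bar(C))$. Your proof is an unwinding of that classical fact: freeness of the underlying graded category of $L(C)$ reduces everything to the generators $c(f_1,\dots,f_n)$, and the condition $dF=Fd$ checked on generators is precisely the $A_\infty$ functor equation; the representability argument at the end then gives functoriality of $L$ in $A_\infty$ morphisms and the triangle identities for free. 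The one point you should make explicit is the treatment of identities: as the graph $\Gamma(C)$ is described, it contains a generator $c(\id_X)$ distinct from the identity morphism of the object $X$ in $L(C)$, as well as generators $c(f_1,\dots,f_n)$ in which some $f_i$ is an identity, so the bijection you construct lands in non-normalized $A_\infty$ functors; to match the strictly unital convention one must either normalize $\Gamma(C)$ by discarding the degenerate generators or impose that they map compatibly. This is a gap in the construction as stated in the paper rather than in your argument, but your bijection is only literally an identification of $\Fun_\dg(L(C),D)$ with $\Fun_{A_\infty}(C,D)$ once that convention is fixed.
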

\qed

As usual, the pair $(L,R)$ of adjoint functors gives rise to a comonad $T=LR\colon \Cat_\dg(\k)\to\Cat^\dg(\k)$, where $T(C)=L(C)$.
One gets dg functors $\varkappa\colon L(C)\to C$  and $\mu\colon L(C)\to L(L(C))$.

\begin{lemma}
For any small dg category $C$, the dg category $L(C)$ is cofibrant, and the 
monadic maps $\varkappa\colon L(C)\to C$ and $\mu\colon L(C)\to L(L(C))$ are quasi-equivalences. 
\end{lemma}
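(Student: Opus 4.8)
The plan is to recognise $L(C)$ as the cobar--bar resolution of $C$ and to read off all three assertions from this identification together with standard closed-model arguments. The underlying graded category of $L(C)$ is free on the graph $\Gamma(C)$, and the differential of a generator $c(f_1,\dots,f_n)$ is a sum of three kinds of terms: the \emph{splitting} terms $c(f_{i+1},\dots,f_n)\cdot c(f_1,\dots,f_i)$, the \emph{contraction} terms $c(f_1,\dots,f_{i+1}f_i,\dots,f_n)$, and the \emph{internal} terms $c(f_1,\dots,df_i,\dots,f_n)$. Reading the letters $c(f_1,\dots,f_n)$ as the bar generators $f_1\otimes\dots\otimes f_n$, the contraction and internal terms constitute the bar differential and the splitting terms the cobar (deconcatenation) differential, so that $L(C)=\mathrm{Cobar}\,\mathrm{Bar}(C)$ and $\varkappa$ is the counit of the bar--cobar adjunction.

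\emph{Cofibrancy.} I would present $L(C)$ as a relative $I$-cell complex over the discrete dg category on $\Ob(C)$. Filtering by length, let $F_n$ be the dg subcategory generated by the letters $c(f_1,\dots,f_m)$ with $m\le n$; the three types of terms show $d(F_n)\subseteq F_n$, and modulo $F_{n-1}$ the differential of a length-$n$ generator reduces to its internal part. Choosing, for each pair $X,Y$, a homogeneous basis of $C(X,Y)$ in normal form with respect to $d$ (cycles representing homology, boundaries, and chosen preimages of boundaries, which exists because $\k$ is a field), I refine the filtration inside length $n$ by the number of preimage-type entries. The internal differential strictly lowers this number, so each generator can be adjoined with differential lying in the subcategory already built. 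Each such adjunction is a pushout of a generating cofibration of the Tabuada structure (freely adjoin a morphism killing a prescribed cycle), and the objects are added by pushouts of $\emptyset\to\k$; hence $\emptyset\to L(C)$ is a relative $I$-cell complex and $L(C)$ is cofibrant.

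\emph{The map $\varkappa$.} Since $\varkappa$ is the identity on objects (it sends $c(f)\mapsto f$ and annihilates the higher generators), essential surjectivity of $H^0(\varkappa)$ is automatic, and it remains to prove that $\varkappa\colon L(C)(X,Y)\to C(X,Y)$ is a quasi-isomorphism for all $X,Y$. Under the identification $L(C)=\mathrm{Cobar}\,\mathrm{Bar}(C)$ this is the acyclicity of the augmented cobar--bar complex, a classical fact proved by a standard contracting homotopy (the extra degeneracy); in the present generality I would cite [LH, Ch.~8]. This is the only genuinely homological step, and I expect it to be the main obstacle --- in particular one should argue via the contracting homotopy rather than a naive length spectral sequence, whose convergence is delicate when the $\Hom$-complexes of $C$ are unbounded.

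\emph{The map $\mu$.} This is now formal. The comonad counit law gives $\varkappa_{L(C)}\circ\mu_C=\id_{L(C)}$, where $\varkappa_{L(C)}\colon L(L(C))\to L(C)$. Applying the previous step to the dg category $L(C)$ shows that $\varkappa_{L(C)}$ is a quasi-equivalence, while $\id_{L(C)}$ trivially is; since quasi-equivalences satisfy two-out-of-three, $\mu_C$ is a quasi-equivalence. I would use this identity rather than $L(\varkappa_C)\circ\mu_C=\id_{L(C)}$, as the latter would require knowing that $L$ preserves the quasi-equivalence $\varkappa_C$ out of the possibly non-cofibrant $C$.
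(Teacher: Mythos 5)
Your proof is correct. Note that the paper itself offers no argument for this lemma: it is stated with a bare \qed inside a block that is in fact commented out of the compiled text, so there is nothing to compare your route against. On the merits, each of your three steps is sound. The identification $L(C)=\Cobar(\Bar(C))$ is exactly right (and is consistent with how the paper later uses $\Cobar(\Bar(C))$ in Lemma \ref{lemmastep0}); the acyclicity of the bar--cobar counit on $\Hom$-complexes via the extra-degeneracy homotopy, cited from [LH, Ch.~8], is the standard reference, and your caution about the length spectral sequence for unbounded $\Hom$-complexes is well placed. For cofibrancy, the refinement of the length filtration by the number of ``preimage-type'' basis entries (using a splitting $C(X,Y)\cong H\oplus B\oplus P$ over the field $\k$) correctly handles the internal terms $c(f_1,\dots,df_i,\dots,f_n)$, which stay in the same length stratum; this makes each generator an attachment along $s(k)\colon C(k)\to P(k)$ and exhibits $L(C)$ as an $I$-cell complex in the sense of Section \ref{sectioncmcdg}. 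Finally, deducing that $\mu$ is a quasi-equivalence from the comonad identity $\varkappa_{L(C)}\circ\mu_C=\id_{L(C)}$ and two-out-of-three is the cleanest choice, precisely because it only invokes the counit statement for the category $L(C)$ rather than any preservation property of $L$.
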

\qed

One gets a dg functor, given on objects by the isomorphism of \eqref{adjl}, and on morphisms induced by $\varkappa\colon L(C)\to C$:
\begin{equation}
F_\varkappa\colon \Coh_\dg(L(C),D)\to \Coh_{A_\infty}(C,D)
\end{equation}
\begin{lemma}
Let $C$ be a small dg category over $\k$. Then the dg functor $F_\varkappa$ is a quasi-equivalence.
\end{lemma}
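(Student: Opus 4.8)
The plan is to factor the comparison $F_\varkappa$ through the $A_\infty$-coherent category $\Coh_{A_\infty}(L(C),D)$ and to reduce the statement to Proposition \ref{fquis} applied to the quasi-equivalence $\varkappa\colon L(C)\to C$; I would treat quasi-essential surjectivity and full faithfulness separately. Quasi-essential surjectivity is immediate, because on objects $F_\varkappa$ is the adjunction bijection $\Fun_\dg(L(C),D)=\Fun_{A_\infty}(C,D)$, hence a bijection between the object-sets of source and target, so $H^0(F_\varkappa)$ is already surjective on isomorphism classes.

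For full faithfulness, fix dg functors $\tilde F,\tilde G\colon L(C)\to D$ and let $F=\tilde F\circ\eta_C$, $G=\tilde G\circ\eta_C$ be the corresponding $A_\infty$ functors, where $\eta_C\colon C\to L(C)$ is the unit of the adjunction $(L,R)$ (an $A_\infty$ functor). The first step is to observe that the natural inclusion $\Coh_\dg(L(C),D)\hookrightarrow\Coh_{A_\infty}(L(C),D)$ is an isomorphism on the Hom-complex from $\tilde F$ to $\tilde G$: since $L(C)$ is an honest dg category and $\tilde F,\tilde G$ carry no higher components, every extra term of the $A_\infty$-coherent differential involves either a higher product of the source or a higher component of $\tilde F,\tilde G$, all of which vanish, so $\Coh_{A_\infty}(L(C),D)(\tilde F,\tilde G)$ coincides with the ordinary Hochschild complex $\Coh_\dg(L(C),D)(\tilde F,\tilde G)$. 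Under this identification, the chain map induced by $F_\varkappa$ on the Hom-complex from $\tilde F$ to $\tilde G$ is the chain map $\eta_C^*$ induced by the unit $\eta_C$ (the comparison canonically attached to $\varkappa$).

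It remains to show that $\eta_C^*$ is a quasi-isomorphism on the Hom-complexes in question. Here I would use the triangle identities of $(L,R)$: one has $\varkappa\circ\eta_C=\id_C$ strictly, whereas $\eta_C\circ\varkappa\simeq\id_{L(C)}$ holds only up to $A_\infty$ homotopy ($\varkappa$ and $\eta_C$ being mutually quasi-inverse quasi-equivalences). Passing to cohomology, the strict identity yields $H(\eta_C^*)\circ H(\varkappa^*)=\id$ on the Hom-complex from $F$ to $G$; since $\varkappa$ is a dg quasi-equivalence, $H(\varkappa^*)$ is an isomorphism by Proposition \ref{fquis}, hence so is $H(\eta_C^*)$, i.e. $\eta_C^*$ is a quasi-isomorphism on $\Coh_{A_\infty}(L(C),D)(F\circ\varkappa,G\circ\varkappa)$. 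Finally, the homotopy $\eta_C\circ\varkappa\simeq\id_{L(C)}$ identifies $\tilde F\simeq F\circ\varkappa$ and $\tilde G\simeq G\circ\varkappa$, so the Hom-complex for $(\tilde F,\tilde G)$ is quasi-isomorphic to that for $(F\circ\varkappa,G\circ\varkappa)$ compatibly with $\eta_C^*$; full faithfulness then follows by two-out-of-three.

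The step I expect to be the main obstacle is exactly this last bookkeeping: matching the dg and $A_\infty$ coherent differentials precisely, and controlling the non-strict triangle identity $\eta_C\circ\varkappa\simeq\id_{L(C)}$, so that the square relating $F_\varkappa$, $\varkappa^*$ and $\eta_C^*$ commutes up to a coherent homotopy. Conceptually the cleanest formulation is that $\eta_C$ is itself an $A_\infty$ quasi-equivalence, whence $\eta_C^*$ is a quasi-equivalence by the $A_\infty$-bifunctoriality already invoked in the proof of Proposition \ref{fquis}; the retraction argument above is simply a way of extracting what is needed from the stated dg version.
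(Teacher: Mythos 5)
The paper gives no argument for this lemma at all: it sits in a block of material the author ultimately commented out, and the proof is left blank. So there is nothing to compare your attempt against, and I can only assess it on its own terms. Your reading of $F_\varkappa$ as the (full) inclusion $\Coh_\dg(L(C),D)\hookrightarrow\Coh_{A_\infty}(L(C),D)$ followed by restriction along the unit $\eta_C\colon C\to L(C)$ of the adjunction is the right one, and the two preliminary observations are correct and cost nothing: $F_\varkappa$ is bijective on objects, so $H^0$-essential surjectivity is free; and for two strict dg functors $\tilde F,\tilde G$ the dg-coherent and $A_\infty$-coherent Hom-complexes literally coincide, since every extra term of the $A_\infty$ differential involves a higher Taylor component of $\tilde F$ or $\tilde G$, and these vanish.

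The one genuine weak point is the retraction argument in the middle. From $\varkappa\circ\eta_C=\id_C$ and Proposition \ref{fquis} you only learn that $\eta_C^*$ is a quasi-isomorphism on the Hom-complexes $\Coh_{A_\infty}(L(C),D)(F\circ\varkappa,\,G\circ\varkappa)$, i.e.\ between objects in the image of $\varkappa^*$; what you need is the statement for $(\tilde F,\tilde G)$, and $\tilde F\neq F\circ\varkappa$ because $\eta_C\circ\varkappa$ is only homotopic to $\id_{L(C)}$. The sentence asserting that this homotopy identifies the two Hom-complexes ``compatibly with $\eta_C^*$'' is precisely the content that would have to be proved, and as written it is not. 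Fortunately your closing reformulation closes the gap and makes the detour unnecessary: $\eta_C$ is an $A_\infty$ quasi-equivalence by two-out-of-three applied to $\varkappa\circ\eta_C=\id_C$ (with $\varkappa$ a quasi-equivalence), and the proof of Proposition \ref{fquis} --- bifunctoriality of $\Coh_{A_\infty}$ with respect to $A_\infty$ functors together with invertibility of weak equivalences as $A_\infty$ functors, both taken from [LH] --- applies verbatim to the $A_\infty$ functor $\eta_C$, giving that $\eta_C^*$ is quasi-fully-faithful on all Hom-complexes, in particular between $\tilde F$ and $\tilde G$. Combined with your two preliminary observations this yields a complete proof; I would delete the retraction paragraph and keep only the direct argument. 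Compare also Lemma \ref{lemmastep0} in the final text, which runs the closely analogous argument for $\Cobar(\Bar(C))$ in place of $L(C)$.
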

\qed

One has:
\begin{prop}\label{propr}
Let $C,D$ be small dg categories over a field $\k$, $C$ is cofibrant. Then
the natural embedding
\begin{equation}
\Coh_\dg(C,D)\hookrightarrow\Coh_{A_\infty}(C,D)
\end{equation}
is a quasi-eaquivalence of dg categories.
\end{prop}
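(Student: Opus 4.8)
The plan is to verify the two defining conditions of a quasi-equivalence for the natural embedding $\iota_C\colon\Coh_\dg(C,D)\hookrightarrow\Coh_{A_\infty}(C,D)$ separately, and to observe that only essential surjectivity on $H^0$ carries any real content. Faithfulness at the chain level is automatic: for two \emph{dg} functors $F,G\colon C\to D$, viewed as $A_\infty$ functors with vanishing higher components, the twisting terms in the differential on $\Coh(F,G)$ coming from the $A_\infty$ structure maps of $F$, $G$ (and from $C$, which is strict) all vanish, so the $A_\infty$ coherent-transformation complex reduces verbatim to the dg one. Hence $\iota_C$ is in fact an \emph{isomorphism} on every Hom-complex and identifies $\Coh_\dg(C,D)$ with the full dg subcategory of $\Coh_{A_\infty}(C,D)$ spanned by the strict functors. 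Condition (a) is therefore free, and everything reduces to showing that $H^0(\iota_C)$ is essentially surjective, i.e. that every $A_\infty$ functor $C\to D$ is isomorphic in $H^0(\Coh_{A_\infty}(C,D))$ to a strict dg functor. This is a strictification statement, and it is exactly here that cofibrancy of $C$ must be used.

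To produce the strictification I would exploit the functor $L$ and the comparison $F_\varkappa\colon\Coh_\dg(L(C),D)\to\Coh_{A_\infty}(C,D)$. By the preceding lemma $F_\varkappa$ is a quasi-equivalence which is moreover \emph{bijective on objects}, via the adjunction $\Fun_\dg(L(C),D)=\Fun_{A_\infty}(C,D)$; combined with fullness and faithfulness this makes $H^0(F_\varkappa)$ an isomorphism of categories. A routine unwinding of the unit and counit of $L\dashv R$ shows that, for a strict $\Psi\colon C\to D$, the object underlying $F_\varkappa^{-1}(\iota_C\Psi)$ is the dg functor $\Psi\circ\varkappa\colon L(C)\to D$, while $F_\varkappa^{-1}(\Phi)=\tilde\Phi$ for an $A_\infty$ functor $\Phi$ corresponding to $\tilde\Phi\colon L(C)\to D$. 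Since $C$ and $L(C)$ are both cofibrant (hence cofibrant–fibrant), the quasi-equivalence $\varkappa\colon L(C)\to C$ becomes invertible in $\Ho$; choosing a dg functor $s\colon C\to L(C)$ that represents $[\varkappa]^{-1}$ gives $s\varkappa\simeq\id_{L(C)}$. Given $\Phi$ as above I would set $\Psi:=\tilde\Phi\circ s$, so that $\Psi\circ\varkappa=\tilde\Phi\circ s\varkappa\simeq\tilde\Phi$; applying the isomorphism $H^0(F_\varkappa)$ then yields $\iota_C\Psi\cong\Phi$ in $H^0(\Coh_{A_\infty}(C,D))$, which is the desired essential surjectivity.

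The main obstacle is the bridge between two a priori different notions of homotopy: one must know that model-category-homotopic dg functors $L(C)\to D$ define \emph{isomorphic} objects of $H^0(\Coh_\dg(L(C),D))$, so that $s\varkappa\simeq\id$ genuinely produces an isomorphism $\tilde\Phi\circ s\varkappa\cong\tilde\Phi$. I would extract this from the path-object description of $\Coh_\dg$ in [Tab2] (cf. Theorem \ref{theorem03}): a right homotopy is a dg functor into the path object, whose restriction is a closed degree-zero coherent transformation that is invertible in $H^0$. Two secondary points should be stated carefully, though both are standard: the identity $F_\varkappa\circ\varkappa^*=\iota_C$ on Hom-complexes, and the fact that every object of $\Cat_\dg(\k)$ is fibrant, so that Whitehead's theorem applies to $\varkappa$. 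If one prefers to avoid $F_\varkappa$ altogether, the same conclusion follows by a direct induction: reduce to $C$ an $I$-cell complex by a retract argument and then kill the higher components of a given $A_\infty$ functor one cell-filtration step at a time, using freeness of the generators to define the correcting coherent homotopy; in that route the difficulty is purely the bookkeeping of signs and the compatibility of the homotopy with the differential.
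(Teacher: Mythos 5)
Your argument is correct, and its skeleton coincides with the paper's own proof of this statement (which appears in the body of the paper as Lemma \ref{lemmastep0}): both reduce everything to the comparison with $\Coh_\dg$ of the cobar--bar resolution $L(C)=\Cobar(\Bar(C))$, using that $A_\infty$-functors $C\to D$ are exactly dg functors $L(C)\to D$ and that the counit $\varkappa=p\colon L(C)\to C$ induces quasi-isomorphisms on all Hom-complexes. Your opening observation that the embedding is actually an \emph{isomorphism} onto the full dg subcategory of strict functors, so that only essential surjectivity of $H^0$ carries content, is a clean and correct sharpening of what the paper obtains by chasing its triangle $s\circ i=t$. The one genuine divergence is in how cofibrancy of $C$ is spent. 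The paper lifts $\id_C$ through the acyclic fibration $p$ to get a \emph{strict} section $q$ with $p\circ q=\id_C$; then $q^*$ is literally surjective on objects and a quasi-isomorphism on Hom-complexes, so $H^0(q^*)$ is an equivalence and essential surjectivity drops out with no further input. You instead take only a homotopy inverse of $\varkappa$ and must therefore cross the bridge from ``right-homotopic dg functors $L(C)\to D$'' to ``isomorphic objects of $H^0(\Coh_\dg(L(C),D))$''. That bridge is sound --- a right homotopy through the Tabuada path object is precisely a closed degree-$0$ coherent transformation with pointwise $H^0$-invertible components, hence an isomorphism in $H^0$ by Lemma \ref{h0coh} --- and you correctly flag it as the crux; but it is an extra lemma (plus the check that post-composition with $\tilde\Phi$ preserves the homotopy relation) that the strict-section route avoids entirely. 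If you replace ``homotopy inverse'' by ``section obtained from the left lifting property of $\varnothing\to C$ against the acyclic fibration $\varkappa$'', your proof collapses onto the paper's.
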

\begin{proof}
{\it (i):}

{\it (ii):}
One has:
$$
\xymatrix{
&\Coh_\dg(L(C),D)\ar[dl]_{\sim}\ar[dr]^{F_\varkappa}\\
\Coh_{A_\infty}(L(C),D)\ar[rr]&&\Coh_{A_\infty}(C,D)
}
$$
where the left angle arrow is a weak equivalence by (i), and $F_\varkappa$ is a weak equivalence by Lemma ???. Now the statement follows from Proposition \ref{propfaonte}(i), and from the Yoneda lemma applied to $\Hot(\Cat_\dg(\k))$.

\end{proof}

\begin{coroll}
Let $D$ be a cofibrant small dg category over $\k$. Then the functor $\Coh_\dg(D,-)$ sends weak equivalences to weak equivalences. 
In particular, for any dg category $C$, the functor $\Coh_\dg(L(C),-)$ sends weak equivalences to weak equivalences. 
\end{coroll}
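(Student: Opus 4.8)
The plan is to deduce the statement about $\Coh_\dg$ from the functoriality of $\Coh_{A_\infty}$ already recorded in Proposition \ref{fquis}, using the comparison of Proposition \ref{propr} as a bridge. The key observation is that, for $D$ cofibrant, the categories $\Coh_\dg(D,E)$ and $\Coh_{A_\infty}(D,E)$ are naturally quasi-equivalent, so that preservation of weak equivalences can be transported from the $A_\infty$ side, where it is known, to the dg side, by a two-out-of-three argument.

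First I would fix a quasi-equivalence $g\colon E\to E_1$ and form the square
$$
\xymatrix{
\Coh_\dg(D,E)\ar[r]^{\iota_E}\ar[d]_{g_*}&\Coh_{A_\infty}(D,E)\ar[d]^{g_*}\\
\Coh_\dg(D,E_1)\ar[r]_{\iota_{E_1}}&\Coh_{A_\infty}(D,E_1)
}
$$
in which the horizontal arrows $\iota_E,\iota_{E_1}$ are the natural embeddings of $\Coh_\dg$ into $\Coh_{A_\infty}$ and the vertical arrows are induced by post-composition with $g$. This square commutes: it simply expresses the naturality of the embedding with respect to the functoriality map $g_*$, which is immediate from the definitions of the two $\Coh$-categories and of $g_*$.

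Next I would identify the remaining edges as quasi-equivalences. Since $D$ is cofibrant, Proposition \ref{propr} shows that both $\iota_E$ and $\iota_{E_1}$ are quasi-equivalences; since $g$ is a quasi-equivalence, Proposition \ref{fquis} shows that the right-hand vertical map $g_*\colon\Coh_{A_\infty}(D,E)\to\Coh_{A_\infty}(D,E_1)$ is a quasi-equivalence. Hence the composite $g_*\circ\iota_E$ around the top-right of the square is a quasi-equivalence, being a composite of two. By commutativity this composite equals $\iota_{E_1}\circ g_*$; as $\iota_{E_1}$ is also a quasi-equivalence, the two-out-of-three property of the weak equivalences of the Tabuada model structure forces the left-hand vertical map $g_*\colon\Coh_\dg(D,E)\to\Coh_\dg(D,E_1)$ to be a quasi-equivalence. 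This proves the first assertion.

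For the second assertion I would specialize to $D=L(C)$: since $L(C)$ is cofibrant for every small dg category $C$, the first part applies directly. I do not anticipate a genuine obstacle here, as all the substance is carried by Propositions \ref{fquis} and \ref{propr}; the only points requiring care are the commutativity (really the naturality) of the square and the check that two-out-of-three is applied to a genuinely commuting diagram, both of which are routine.
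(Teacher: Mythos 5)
Your proof is correct and coincides with the paper's own argument: the paper proves the same statement (as part (ii) of Corollary \ref{corollstep0}) by exactly this commutative square comparing $\Coh_\dg$ with $\Coh_{A_\infty}$, invoking the cofibrant-case quasi-equivalence of the embedding (Lemma \ref{lemmastep0}), Proposition \ref{fquis} for the $A_\infty$ side, and two-out-of-three. No substantive difference.
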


\qed

\endcomment

\section{\sc The twisted tensor product}
\subsection{\sc The definition}\label{sectiondeftwist}
Let $C$ and $D$ be two small dg categories over $\k$. We define {\it the twisted dg tensor product} $C\sotimes D$, as follows.

The set of objects of $C\sotimes D$ is $\Ob(C)\times \Ob(D)$. Consider the graded $\k$-linear category $F(C,D)$ with objects $\Ob(C)\times \Ob(D)$ freely generated by $\{C\otimes\id_d\}_{d\in D}$, $\{\id_c\otimes D\}_{c\in C}$, and by the new morphisms $\varepsilon(f;g_1,\dots,g_n)$, specified below.

For
$$
c_0\xrightarrow{f}c_1\text{  and  }d_0\xrightarrow{g_1}d_1\xrightarrow{g_2}\dots\xrightarrow{g_n}d_n
$$
chains of composable maps in $C$ and in $D$, correspondingly, with $n\ge 1$, one introduces a morphism
$$
\varepsilon(f;g_1,\dots,g_n)\in \Hom(c_0\times d_0,c_1\times d_n)
$$
of degree
\begin{equation}
\deg \varepsilon(f;g_1,\dots,g_n)=-n+\deg f_1+\sum\deg g_j
\end{equation}

The underlying fraded category is defined as the quotient of $F(C,D)$ by the two-sided ideal, defined by the following identities:

\begin{itemize}
\item[$(R_1)$]
$(\id_c\otimes g_1)* (\id_c\otimes g_2)=\id_c\otimes (g_1g_2)$, $(f_1\otimes\id_d)*(f_2\id_d)=(f_1f_2)*\id_d$
\item[$(R_2)$] $\varepsilon(f;g_1,\dots,g_n)$ is linear in each argument,
\item[$(R_3)$] 
$\varepsilon(f; g_1,\dots,g_n)=0$ if $g_i=\id_y$ for some $y\in \Ob(D)$ and for some $1\le i\le n$,\\
$\varepsilon(\id_x; g_1,\dots,g_n)=0$ for $x\in\Ob(C)$ and $n\ge 1$,
\item[$(R_4)$]
for any $c_0\xrightarrow{f_1}c_1\xrightarrow{f_2}c_2$ and $d_0\xrightarrow{g_1}d_1\xrightarrow{g_2}\dots\xrightarrow{g_N}d_N$
one has:
\begin{equation}\label{eqsuper}
\varepsilon(f_2f_1;g_1,\dots,g_N)=\sum_{0\le m\le N}(-1)^{|f_1|(|g_{m+1}|+\dots+|g_N|+N-m)}\varepsilon(f_2;g_{m+1},\dots,g_N)\star\varepsilon(f_1;g_1,\dots,g_m)
\end{equation}
\end{itemize}
To make it a dg category, one should define the differential $d\varepsilon(f;g_1,\dots,g_n)$.

For $n=1$ we set:
\begin{equation}\label{eqd0}
\begin{aligned}
\ & -d\varepsilon (f;g)+\varepsilon(df;g)+(-1)^{|f|}\varepsilon(f;dg)=\\
& (-1)^{|f||g|}(\id_{c_1}\otimes g)\star (f\otimes \id_{d_0})
-(f\otimes \id_{d_1})\star (\id_{c_0}\otimes g)
\end{aligned}
\end{equation}
For $n\ge 2$:
\begin{equation}\label{eqd1}
\begin{aligned}
\ &\varepsilon(df;g_1,\dots,g_n)=\\
&d\varepsilon({f;g_1,\dots,g_n})-\sum_{j=1}^n(-1)^{|f|+|g_n|+\dots+|g_{j+1}|+n-j} \varepsilon({f;g_1,\dots,dg_j,\dots,g_n})\big)+(-1)^{|f|+n-1}\Big[\\
&
(-1)^{|f||g_n|+|f|}(\id_{c_1}\otimes g_n)\star \varepsilon({f;g_1,\dots,g_{n-1}})
+(-1)^{|f|+\sum_{i=2}^n(|g_i|+1)+1}\varepsilon({f;g_2,\dots,g_n})\star (\id_{c_0}\otimes g_1)+\\
&\sum_{i=1}^{n-1} (-1)^{|f|+\sum_{j=i+1}^n(|g_j|+1)  } \varepsilon({f;g_1,\dots,g_{i+1}\circ g_i,\dots,g_n})\Big]
\end{aligned}
\end{equation}

We have:
\begin{lemma}\label{lemma1}
One has $d^2=0$. The differential agrees with relations ($R_1$)-($R_4$) above. 
\end{lemma}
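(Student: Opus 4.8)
The plan is to treat the two assertions separately, reducing each to a check on the free generators of $F(C,D)$.

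For $d^2=0$: since $d$ is the unique derivation of $F(C,D)$ (with respect to $\star$ and the Koszul sign rule) extending its prescribed values on generators, the composite $d^2$ is again a derivation, so it suffices to verify that it annihilates each generator. On $f\otimes\id_d$ and $\id_c\otimes g$ this is immediate, because the two embeddings $C\to C\sotimes D$ and $D\to C\sotimes D$ are dg functors by construction, so there $d$ restricts to the differentials of $C$ and $D$, for which $d^2=0$. The substantial case is $d^2\varepsilon(f;g_1,\dots,g_n)=0$. I would compute $d(d\varepsilon(f;g_1,\dots,g_n))$ by applying \eqref{eqd1} (resp. \eqref{eqd0} to the length-one pieces it produces) to each summand, and by Leibniz to the factors of the $\star$-products. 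I expect the resulting terms to fall into four groups that cancel separately: (a) the summands carrying $d^2f$ or some $d^2g_i$ vanish outright; (b) the doubly-internal terms $\varepsilon(df;\dots,dg_j,\dots)$ and $\varepsilon(f;\dots,dg_i,\dots,dg_j,\dots)$ occur twice with opposite Koszul signs and cancel in pairs; (c) the combinatorial terms coming from the composition summands $\varepsilon(f;\dots,g_{i+1}g_i,\dots)$ cancel exactly as in the assertion that the reduced bar differential of $D$ squares to zero, i.e. by associativity of composition in $D$; and (d) the boundary terms $(\id_{c_1}\otimes g_n)\star\varepsilon(\cdots)$ and $\varepsilon(\cdots)\star(\id_{c_0}\otimes g_1)$ produce, after Leibniz, cross-terms that cancel against one another and against the length-$(n-1)$ output of group (c), using associativity of $\star$ together with the base identity \eqref{eqd0}.

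For the compatibility of $d$ with the relations I need $d$ to preserve the two-sided ideal cut out by $(R_1)$--$(R_4)$, so that it descends to the quotient dg category. Relation $(R_2)$ is preserved automatically, as $d$ is $\k$-linear. For $(R_1)$ this is again the statement that the embeddings of $C$ and $D$ are dg functors. For $(R_3)$ I would substitute $g_i=\id_y$ (resp. $f=\id_x$) into \eqref{eqd1} and check that every surviving summand lies in the ideal: the composition terms telescope against the neighbouring arguments and the boundary terms collapse using $(R_1)$ and $(R_3)$ itself, so that $d$ carries a degenerate $\varepsilon$ to a combination of degenerate ones.

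The main obstacle, and the place where the exponents in \eqref{eqsuper} and \eqref{eqd1} are genuinely forced, is the compatibility with $(R_4)$. Here I would apply $d$ to both sides of \eqref{eqsuper}: on the left one expands $d\varepsilon(f_2f_1;g_1,\dots,g_N)$ by \eqref{eqd1}, while on the right one differentiates each product $\varepsilon(f_2;\dots)\star\varepsilon(f_1;\dots)$ by Leibniz and then re-expands both factors by \eqref{eqd1}. The claim is that the two expansions agree modulo the ideal. This is the intricate step: one must match the internal-differential contributions (using $d(f_2f_1)=(df_2)f_1+(-1)^{|f_2|}f_2(df_1)$), the bar-composition contributions in the $g$'s, and the boundary contributions, verifying that every Koszul sign generated on the right reproduces the single prescribed sign on the left. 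I expect this to pin down all the signs in \eqref{eqsuper} and \eqref{eqd1} simultaneously, and I would carry it out by splitting the index range $0\le m\le N$ of \eqref{eqsuper} and comparing term by term against the three bracketed families of \eqref{eqd1}.
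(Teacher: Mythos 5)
The paper states this lemma with no proof at all (it is left as a routine verification), and your outline is the correct and standard way to carry it out: extend $d$ as a derivation so that $d^2$ is again a derivation and need only be checked on the generators $\varepsilon(f;g_1,\dots,g_n)$, where the terms of $d^2$ cancel in exactly the groups you describe (internal differentials squaring to zero, bar-type terms cancelling by associativity in $D$, and cross/boundary terms cancelling via \eqref{eqd0} and the Leibniz rule), and then verify that $d$ preserves the ideal generated by $(R_1)$--$(R_4)$, the only delicate point being the compatibility of \eqref{eqd1} with \eqref{eqsuper}, which you correctly single out as the step that forces the signs. Your proposal is a plan rather than a completed sign computation, but it contains no wrong turn and is, if anything, more detailed than what the paper provides.
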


\qed

It is clear that the twisted tensor product $C\sotimes D$ is functorial in each argument, for dg functors $C\to C^\prime$ and $D\to D^\prime$.

Note that the twisted product $C\sotimes D$ is not symmetric in $C$ and $D$.

It is {\it not} true in general that the dg category $C\sotimes D$ is quasi-equivalent to $C\otimes D$, or that these two dg categories are isomorphic as objects of $\Hot(\Cat_\dg(\k))$. See Theorem \ref{theorht} for a result on the homotopy type of $C\sotimes D$. 

\subsection{\sc The adjunction}
Our interest in the twisted tensor product $C\sotimes D$ is explained by the following fact:
\begin{theorem}\label{prop1}
Let $C,D,E$ be three small dg categories over $\k$. 
Then there is a 3-functorial isomorphism of sets:
\begin{equation}\label{adj1}
\Phi\colon \Fun_{\dg}(C\sotimes D,E)\simeq \Fun_{\dg}(C,\Coh_\dg(D,E))
\end{equation}
\end{theorem}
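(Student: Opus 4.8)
The plan is to unwind both sides explicitly and to observe that the generators and relations defining $C\sotimes D$ have been set up precisely so that a dg functor out of $C\sotimes D$ is the same datum as the coherent data of a dg functor into $\Coh_\dg(D,E)$. So I would construct $\Phi$ and its inverse by matching components. Given a dg functor $\Psi\colon C\sotimes D\to E$, set $\Theta=\Phi(\Psi)$ to be the assignment $c\mapsto\Theta(c)$, where $\Theta(c)\colon D\to E$ is defined on objects by $d\mapsto\Psi(c\times d)$ and on morphisms by $\Theta(c)(g):=\Psi(\id_c\otimes g)$; and for a morphism $f\colon c_0\to c_1$ in $C$, let $\Theta(f)\in\Coh(\Theta(c_0),\Theta(c_1))$ be the coherent natural transformation with components
\[
\Theta(f)_0(d):=\Psi(f\otimes\id_d),\qquad
\Theta(f)_n(g_1,\dots,g_n):=\Psi\big(\varepsilon(f;g_1,\dots,g_n)\big)\ \ (n\ge1).
\]
Since $\Psi$ preserves degree and the $\coh_n$-component of a coherent transformation of total degree $\deg f$ is a map of internal degree $\deg f-n$, feeding $g_1\otimes\cdots\otimes g_n$ (of degree $\sum\deg g_j$) into $\Theta(f)_n$ lands in $E$ in degree $-n+\deg f+\sum\deg g_j=\deg\varepsilon(f;g_1,\dots,g_n)$, so the prescription is degree-correct.

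Next I would check that $\Theta$ is a well-defined dg functor, which is exactly where the relations $(R_1)$–$(R_4)$ and the differentials \eqref{eqd0}–\eqref{eqd1} are used. Relation $(R_1)$ together with the dg-functoriality of $\Psi$ makes each $\Theta(c)$ a genuine dg functor $D\to E$; relations $(R_2)$ and $(R_3)$ say that each $\Theta(f)_n$ is multilinear and vanishes when some $g_i$ is an identity, which is precisely the multilinearity and normalization built into the normalized Moore complex underlying $\Coh$. That $\Theta$ respects composition, $\Theta(f_2f_1)=\Theta(f_2)\star\Theta(f_1)$, is exactly relation $(R_4)$, equation \eqref{eqsuper}: the level-$N$ component of the $\star$-product of two coherent transformations is the sum over splittings $g_1,\dots,g_m\mid g_{m+1},\dots,g_N$ that appears there, with the same signs. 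Finally, that $\Theta$ commutes with the differential, $\Theta(df)=D\,\Theta(f)$ for the total differential $D$ on $\Coh$, unwinds level-by-level into exactly \eqref{eqd0} (for $n=1$) and \eqref{eqd1} (for $n\ge2$): the term $d\varepsilon(f;\dots)$ reproduces the internal differential of $E$, the terms $\varepsilon(f;\dots,dg_j,\dots)$ reproduce the differential of the source arguments, while the terms $(\id_{c_1}\otimes g_n)\star\varepsilon(f;g_1,\dots,g_{n-1})$, $\varepsilon(f;g_2,\dots,g_n)\star(\id_{c_0}\otimes g_1)$ and $\varepsilon(f;\dots,g_{i+1}\circ g_i,\dots)$ reproduce the outer coface maps $d_0$, $d_n$ and the inner coface maps $d_i$ ($1\le i\le n-1$) of the cosimplicial (Hochschild) differential applied to the lower component $\Theta(f)_{n-1}$.

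The inverse is then forced. Since the graded category underlying $C\sotimes D$ is freely generated by $\{f\otimes\id_d\}$, $\{\id_c\otimes g\}$ and the $\varepsilon(f;g_1,\dots,g_n)$ modulo $(R_1)$–$(R_4)$, a dg functor out of it is determined by compatible images of these generators respecting the relations and the differential. Given $\Theta\colon C\to\Coh_\dg(D,E)$, I read these images off from the object- and morphism-values of the functors $\Theta(c)$ and from the components of the coherent transformations $\Theta(f)$ by running the formulas above backwards; the computations of the previous paragraph show that the images respect $(R_1)$–$(R_4)$ and the differential, so $\Psi=\Phi^{-1}(\Theta)$ is a well-defined dg functor. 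The two assignments are visibly mutually inverse, and since every identification used is natural in the three variables, the isomorphism $\Phi$ is $3$-functorial in $C,D,E$.

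The step I expect to be the main obstacle is the differential matching in the second paragraph: verifying that \eqref{eqd0}–\eqref{eqd1} reproduce the total differential of $\Coh$ \emph{exactly, sign for sign}. Conceptually there is nothing to prove, since the signs in \eqref{eqd1} were calibrated for this, but reconciling the cosimplicial coface signs $(-1)^i$ with the explicit signs in \eqref{eqd1} and with the Koszul signs produced by moving $d$ past the homogeneous arguments $f,g_1,\dots,g_n$ is the delicate bookkeeping on which the proof rests.
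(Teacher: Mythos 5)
Your proposal is correct and follows essentially the same route as the paper's proof: define $\Phi(\Psi)$ on objects by restriction to $\{c\}\sotimes D$ and on a morphism $f$ by the coherent transformation with components $\Psi(f\otimes\id_d)$ and $\Psi(\varepsilon(f;g_1,\dots,g_n))$, with $(R_1)$--$(R_3)$ and \eqref{eqd0}--\eqref{eqd1} guaranteeing well-definedness and the differential compatibility, and $(R_4)$ giving compatibility with composition. Your write-up is in fact more explicit than the paper's (which simply states that the degrees and differentials were ``designed especially'' for this and leaves the inverse implicit), and your identification of the freeness of $C\sotimes D$ as what forces the inverse is exactly the intended mechanism.
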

\begin{proof}
Let $F\colon C\sotimes D\to E$ be a dg functor.

Define a dg functor $\Phi(F)\colon C\to\Coh_{\dg}(D,E)$, as follows:

On objects $\Phi(F)(x)=F|_{\{x\}\sotimes D}$, $x\in C$;

On morphisms: for $x_0\xrightarrow{f}x_1$ a morphism in $C$, set $\Phi(F)(f)$ to be a coherent natural transformation with components define as $F(f)$ for $n=0$, and for $n\ge 1$ its value on $y_0\xrightarrow{g_1}y_1\xrightarrow{g_2}\dots\xrightarrow{g_n}y_n$ is equal to
\begin{equation}
\Phi(F)(f)(g_1,\dots,g_n)=\prod_{n\ge 0} F(\varepsilon(f;\ g_1,\dots,g_n)\in\Hom_E(F(x_0\times y_0),F(x_1,y_n))
\end{equation}
The degree $\varepsilon(f;\ g_1,\dots,g_n))=\deg f+\sum_i\deg g_i-n$ for homogeneous $f,g_i$, and formula \eqref{eqd1} for $d(\varepsilon(f;\ g_1,\dots,g_n)$ are designed especially for $\Phi(F)(f)$ to be a coherent natural transformation. 

Finally, $\Phi(F)(f_2f_1)=\Phi(F)(f_2)\circ \Phi(F)(f_1)$ by the identity \eqref{eqsuper}. It makes $F$ a dg functor. 

\end{proof}

\begin{coroll}\label{corproj}
There is a dg functor $p_{C,D}\colon C\sotimes D\to C\otimes D$, equal to the identity on objects, and sending all $\varepsilon(f;\ g_1,\dots,g_s)$ with $s\ge 1$ to 0.
\end{coroll}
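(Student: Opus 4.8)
The plan is to construct $p_{C,D}$ directly on generators and then verify that it descends to a dg functor, exploiting the fact that, by construction, $C\sotimes D$ is the quotient of the free graded $\k$-linear category $F(C,D)$ by the two-sided ideal generated by $(R_1)$--$(R_4)$, equipped with the differential prescribed in \eqref{eqd0} and \eqref{eqd1}. Hence to produce a dg functor out of $C\sotimes D$ it suffices to fix the images of the generators and check two things: (i) the assignment respects the relations $(R_1)$--$(R_4)$, so that it descends to the quotient and defines a graded functor; and (ii) it commutes with the differentials, upgrading the graded functor to a dg functor. First I would set $p_{C,D}$ to be the identity on objects, send $f\otimes\id_d\mapsto f\otimes\id_d$ and $\id_c\otimes g\mapsto\id_c\otimes g$ (the evident honest morphisms of $C\otimes D$), and send every $\varepsilon(f;g_1,\dots,g_s)$ with $s\ge 1$ to $0$.

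For step (i), relations $(R_2)$ and $(R_3)$ are preserved trivially, since they only constrain the $\varepsilon$'s, which are all killed. Relation $(R_1)$ is preserved because composition in $C\otimes D$ is computed componentwise, so $(\id_c\otimes g_1)\star(\id_c\otimes g_2)=\id_c\otimes(g_1g_2)$ and likewise on the $C$-factor. For $(R_4)$, the left-hand side of \eqref{eqsuper} is $\varepsilon(f_2f_1;g_1,\dots,g_N)\mapsto 0$, while every summand on the right-hand side carries a factor $\varepsilon(f_i;\dots)$ and hence also maps to $0$, so both sides agree.

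For step (ii), compatibility with $d$ on the generators $f\otimes\id_d$ and $\id_c\otimes g$ is immediate, as the differential of $C\sotimes D$ restricted to these is the one inherited from $C$ and $D$. The only substantive check is on the $\varepsilon$'s. For $n=1$ I would apply $p_{C,D}$ to \eqref{eqd0}: the terms $\varepsilon(df;g)$ and $\varepsilon(f;dg)$ die, so one must see that the image of the right-hand side vanishes. Applying the Koszul sign rule for composition in the tensor product gives $(\id_{c_1}\otimes g)\star(f\otimes\id_{d_0})=(-1)^{|f||g|}\,f\otimes g$ while $(f\otimes\id_{d_1})\star(\id_{c_0}\otimes g)=f\otimes g$; substituting into the right-hand side of \eqref{eqd0}, the prefactor $(-1)^{|f||g|}$ makes the two contributions cancel, so $p_{C,D}(d\varepsilon(f;g))=0=d(p_{C,D}\varepsilon(f;g))$. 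For $n\ge 2$, solving \eqref{eqd1} for $d\varepsilon(f;g_1,\dots,g_n)$ exhibits it as a combination of terms each of which still contains an $\varepsilon$ with at least one $D$-argument --- even the contracted term $\varepsilon(f;g_1,\dots,g_{i+1}\circ g_i,\dots,g_n)$ retains $n-1\ge 1$ arguments --- so every such term maps to $0$ and again $p_{C,D}(d\varepsilon)=0=d(0)$.

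The main obstacle I anticipate is precisely the sign bookkeeping of the $n=1$ case: one must pin down the Koszul convention governing composition in $C\otimes D$ and confirm that it conspires with the sign $(-1)^{|f||g|}$ built into \eqref{eqd0} to force a cancellation. This is not accidental --- the twisted differential was arranged so that the strict ($\varepsilon=0$) projection would be a chain map --- but it is the one place where a genuine computation is needed rather than a formal ``everything maps to zero'' argument. As an alternative, I note that $p_{C,D}$ can equivalently be produced through the adjunction \eqref{adj1} of Theorem \ref{prop1}, as the functor corresponding to the dg functor $C\to\Coh_\dg(D,C\otimes D)$ that sends each object $x$ to the inclusion $d\mapsto x\times d$ and each morphism $f$ to the strict natural transformation whose only nonzero component is $f\otimes\id$ in degree $0$; verifying that this strict transformation is an honest coherent natural transformation reduces to the same sign cancellation, so the two routes coincide in substance.
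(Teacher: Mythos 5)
Your proposal is correct and follows the same route as the paper, which simply notes that the functor ``can be either seen directly, or can be deduced from Theorem \ref{prop1} and the natural dg embedding $\FFun_\dg(D,E)\to\Coh_\dg(D,E)$'' together with the classical adjunction \eqref{adj1bis}; you carry out the direct verification in detail (including the essential sign cancellation $(-1)^{|f||g|}\cdot(-1)^{|f||g|}=1$ in the $n=1$ case) and also mention the adjunction route. No gaps.
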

\begin{proof}
It can be either seen directly, or can be deduced from Theorem \ref{prop1} and the natural dg embedding $\FFun_\dg(D,E)\to \Coh_\dg(D,E)$, along with the classic adjunction
\begin{equation}\label{adj1bis}
\Fun_\dg(C\otimes D,E)=\Fun_\dg(C,\FFun_\dg(D,E))
\end{equation}
\end{proof}

\comment
\subsection{\sc The units}
Denote by $\k$ the dg category with a single object $e$ and with $\Hom(e,e)=\k$.

For a small dg category $C$, one has $C\sotimes \k\ne C$ and $\k\sotimes C\ne C$, as it can be seen from our definition. 

Nevertheless one has:
\begin{prop}
Let $C$ be a small dg category over $\k$. There are the following unit dg functors:
\begin{equation}
\begin{aligned}
\ &\lambda_C \colon \k\sotimes C\to C\\
&\lambda^\prime_C\colon C\sotimes \k\to C\\
&\rho_C\colon C\to C\sotimes \k
\end{aligned}
\end{equation}
\end{prop}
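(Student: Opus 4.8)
The plan is to exploit the fact that the unit dg category $\k$ has, up to scalars, only the identity endomorphism of its single object $e$, so that all of the new generators $\varepsilon(f;g_1,\dots,g_n)$ collapse as soon as one of the two tensor factors equals $\k$. Concretely, I would first record the key reduction: in $\k\sotimes C$ every first argument $f$ is a scalar multiple $\lambda\cdot\id_e$, so by $(R_2)$ and $(R_3)$ one has $\varepsilon(\lambda\id_e;g_1,\dots,g_n)=\lambda\,\varepsilon(\id_e;g_1,\dots,g_n)=0$; dually, in $C\sotimes\k$ every $g_i$ is a scalar multiple $\mu_i\id_e$, whence $\varepsilon(f;\mu_1\id_e,\dots,\mu_n\id_e)=\big(\prod_i\mu_i\big)\varepsilon(f;\id_e,\dots,\id_e)=0$, again by $(R_2)$ and $(R_3)$. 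Thus in both cases the only surviving generators are the ``horizontal'' $\k\otimes\id_c$ (resp. $\id_c\otimes\k$), which are scalar multiples of identities, and the ``vertical'' $\id_e\otimes C$ (resp. $C\otimes\id_e$), which form a copy of $C$ by the first (resp. second) identity of $(R_1)$.

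With this reduction in hand I would construct the three functors. For $\lambda_C\colon\k\sotimes C\to C$, set $e\times c\mapsto c$, $\id_e\otimes g\mapsto g$, and $\lambda\id_e\otimes\id_c\mapsto\lambda\id_c$, extending $\k$-linearly and multiplicatively; for $\lambda'_C\colon C\sotimes\k\to C$, dually send $c\times e\mapsto c$, $f\otimes\id_e\mapsto f$, and $\id_c\otimes\mu\id_e\mapsto\mu\id_c$; and for $\rho_C\colon C\to C\sotimes\k$, set $c\mapsto c\times e$ and $f\mapsto f\otimes\id_e$. Equivalently, and more economically, $\lambda_C$ and $\lambda'_C$ may be obtained as the composites of the projection $p$ of Corollary \ref{corproj}, namely $\k\sotimes C\xrightarrow{p}\k\otimes C$ and $C\sotimes\k\xrightarrow{p}C\otimes\k$, with the standard unit isomorphisms $\k\otimes C\cong C$ and $C\otimes\k\cong C$ of the ordinary tensor product of dg categories; since all $\varepsilon$ vanish these projections are in fact isomorphisms, so $\rho_C$ is the corresponding inverse unit inclusion. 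Alternatively one can read off $\lambda_C$ from the adjunction of Theorem \ref{prop1}: a dg functor $\k\to\Coh_\dg(C,C)$ is just a choice of object of $\Coh_\dg(C,C)$, i.e.\ a dg endofunctor of $C$, and taking $\id_C$ yields $\lambda_C$; likewise $\lambda'_C$ comes from the natural embedding $C\cong\FFun_\dg(\k,C)\hookrightarrow\Coh_\dg(\k,C)$ together with Theorem \ref{prop1}.

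The only genuine verification is that these assignments respect the relations and the differential. Compatibility with $(R_1)$ is immediate, $(R_4)$ reads $0=0$ once every $\varepsilon$ vanishes, and $d$ is preserved on the horizontal and vertical generators because there it is simply $d(f)\otimes\id$ (resp.\ $\id\otimes d(g)$). The point to check, which I expect to be the main---though still elementary---obstacle, is that the defining differential formulas \eqref{eqd0} and \eqref{eqd1} remain consistent after setting $\varepsilon\equiv 0$: with the left-hand sides zero, \eqref{eqd0} forces the interchange identity $(-1)^{|f||g|}(\id_{c_1}\otimes g)\star(f\otimes\id_{d_0})=(f\otimes\id_{d_1})\star(\id_{c_0}\otimes g)$ to hold in $\k\sotimes C$ and in $C\sotimes\k$. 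This is harmless precisely because one of $f,g$ is a scalar multiple of an identity (hence of degree $0$), so both sides reduce to the same scalar multiple of a single morphism of $C$; the higher formula \eqref{eqd1} degenerates similarly. Hence $\lambda_C,\lambda'_C,\rho_C$ are well-defined dg functors, and one sees in passing that $\lambda'_C\circ\rho_C=\id_C$, so that $C\sotimes\k$ and $\k\sotimes C$ are indeed isomorphic (though not equal) to $C$.
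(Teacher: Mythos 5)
Your construction of $\lambda_C$ and $\lambda'_C$ — the projection $p_{\k,C}$ (resp.\ $p_{C,\k}$) of Corollary \ref{corproj} followed by the unit isomorphism of the ordinary tensor product — is exactly the paper's argument for those two functors. The divergence is in $\rho_C$, and it is there that your proposal collides with the text. You obtain $\rho_C$ from the claim that every generator $\varepsilon(f;g_1,\dots,g_n)$ vanishes in $C\sotimes\k$, so that $p_{C,\k}$ is an isomorphism. Granting $(R_2)$ and $(R_3)$ exactly as printed, that computation is licensed; but it proves far more than the Proposition asserts — it makes all three unit maps isomorphisms — and it flatly contradicts the sentences surrounding the statement, namely that $C\sotimes\k\ne C$, that $\k\sotimes C\ne C$, and that a functor $\rho'_C\colon C\to\k\sotimes C$ does not exist in general. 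The paper's own construction of $\rho_C$ is of a completely different nature: using the adjunctions \eqref{adj1} and \eqref{adj1bis} it reduces to producing a dg functor $P\colon\Coh_\dg(\k,C)\to\FFun_\dg(\k,C)$, and it identifies $\Coh_\dg(\k,C)(x,y)$ with the product-total complex of $V\xrightarrow{0}V\xrightarrow{\id}V\xrightarrow{0}\cdots$ (the unnormalized Hochschild complex of $\k$ with values in $V=\Hom_C(x,y)$), $P$ being the projection onto the degree-$0$ column. That complex is quasi-isomorphic to $V$ but not equal to it, which is precisely why the author does not simply invert $\lambda'_C$.

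The fault line is the normalization clause of $(R_3)$. If coherent natural transformations are taken unnormalized — as the above description of $\Coh_\dg(\k,C)$ presupposes — then for the adjunction of Theorem \ref{prop1} to hold the classes $\varepsilon(f;\id_e,\dots,\id_e)$ must \emph{survive} in $C\sotimes\k$: they correspond exactly to the higher columns of $\Tot(V\to V\to\cdots)$. In that reading your step ``$\varepsilon(f;\mu_1\id_e,\dots,\mu_n\id_e)=0$ by $(R_2)$ and $(R_3)$'' is the one that fails, $C\sotimes\k$ is genuinely larger than $C\otimes\k$, and $\rho_C$ really does require the paper's adjunction argument. If instead the normalization built into $(R_3)$ and into the definition of $\Coh$ (via the normalized Moore complex) is in force, then your proof is correct — but then the paper's description of $\Coh_\dg(\k,C)$, and its claims that $C\sotimes\k\ne C$ and that $\rho'_C$ does not exist, are the statements that fail. (Note also that the second clause of $(R_3)$, $\varepsilon(\id_x;\vec g)=0$, is forced by unitality of the functors in Theorem \ref{prop1}, so your collapse of $\k\sotimes C$ is hard to avoid in any reading.) You should at minimum state which convention you are using and flag that your conclusion ($C\sotimes\k\cong C$, $\lambda'_C\circ\rho_C=\id$, and the existence of $\rho'_C$) is stronger than, and partly incompatible with, what the surrounding text asserts.
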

\begin{proof}
The dg functor $\lambda_C$ (resp., $\lambda_C^\prime$) is defined as the compositions of $p_{\k,C}$ of Corollary \ref{corproj} followed by the isomorphism $\k\otimes C=C$ (resp., the composition of $p_{C,\k}$ followed by the isomorphism $C\sotimes \k=C$).

For $\rho_C$ it is enough to define, in view of adjunctions \eqref{adj1}, \eqref{adj1bis}, a dg functor $P\colon \Coh_{\dg}(\k,C)\to 
\FFun_\dg(\k,C)$. It is defined as follows.

For both categories $\FFun_\dg(\k,C)$, $\Coh_\dg(\k,C)$, the objects are dg functors $\k\to C$. A dg functor $\k\to C$ is identified with an object $x$ of $C$. For two objects $x,y\in C$, the $\Hom$-complex $\FFun_\dg(\k,C)(x,y)$ is identified with the complex $\Hom_C(x,y)=V$. The $\Hom$-complex $\Coh_\dg(\k,C)(x,y)$ is the (product) total complex of the following bicomplex $W$:
$$
\underset{\deg=0}{V}\xrightarrow{0}\underset{\deg=1}{V}\xrightarrow{\id}\underset{\deg=2}{V}\xrightarrow{0}\underset{\deg=3}{V}\xrightarrow{\id}\dots
$$
There is a map of bicomplexes $W\to V$ sending the components $V$ in degrees 1,2, ... to 0. It induces a map of total complexes.

\end{proof}

Below we define the associativity constraint $\alpha_{C,D,E}\colon (C\sotimes D)\sotimes E\to C\sotimes (D\sotimes E)$ which is not, in general, an isomorphism. We show that the data $(\sotimes, \alpha_{C,D,E}, \lambda_C,\rho_C)$ fulfils the axioms of a {\it skew monoidal category} [LS].

Note that a dg functor $\rho^\prime_C\colon C\to \k\sotimes C$ in general does not exist. 

\endcomment

\subsection{\sc The homotopy type of $C\sotimes D$}
For general $C,D$, we do not know the homotopy type of the dg category $C\sotimes D$. However, one has:
\begin{theorem}\label{theorht}
Let $C,D$ be small dg categories over $\k$. Assume both $C,D$ are cofibrant for the Tabuada closed model structure.
Then $C\sotimes D$ is also cofibrant and is isomorphic to $C\otimes D$ as an object of $\Hot(\Cat_\dg(\k))$.
\end{theorem}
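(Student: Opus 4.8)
The plan is to reduce Theorem \ref{theorht} to the already-established adjunctions by showing that \eqref{adj1} descends from $\Cat_\dg(\k)$ to the homotopy category $\Hot$ when $C,D$ are cofibrant. First I would verify the easy structural claim that $C\sotimes D$ is cofibrant whenever $C$ and $D$ are: by construction $C\sotimes D$ is a free dg category on generators coming from $C$, from $D$, and from the new morphisms $\varepsilon(f;g_1,\dots,g_n)$, with the differential built from lower generators via \eqref{eqd0}--\eqref{eqd1}, so it carries an evident exhaustive filtration exhibiting it as an $I$-cell complex. Granting this, the main goal becomes to establish the homotopy-category adjunction
\begin{equation}\label{planadj}
\Hot(C\sotimes D,E)=\Hot(C,\Coh_\dg(D,E))
\end{equation}
for cofibrant $C,D$.

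The key step is Theorem \ref{theorem03}: for $D$ cofibrant and $E$ arbitrary, $\Coh_\dg(D,\hat E)$ is a path object for $\Coh_\dg(D,E)$, where $\hat{E}$ is the Tabuada path object [Tab2]. I would compute morphisms in $\Hot$ out of a cofibrant object via right homotopy with this path object: $\Hot(C,\Coh_\dg(D,E))$ is the set of dg functors $C\to\Coh_\dg(D,E)$ modulo the homotopy relation detected by $\Coh_\dg(D,\hat E)$. Applying the strict adjunction \eqref{adj1} with $E$ replaced by $\hat E$ gives
\begin{equation}
\Fun_\dg(C\sotimes D,\hat E)=\Fun_\dg(C,\Coh_\dg(D,\hat E)),
\end{equation}
and since $C\sotimes D$ is cofibrant, the left side computes $\Hot(C\sotimes D,E)$ as right homotopy classes through the standard path object of $E$. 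Once Theorem \ref{theorem03} identifies $\Coh_\dg(D,\hat E)$ as a legitimate path object of $\Coh_\dg(D,E)$, the two homotopy relations match under $\Phi$, yielding \eqref{planadj}. Finally I would combine \eqref{planadj} with Faonte's Theorem \ref{theoremfa} and Proposition \ref{fquis} — which identify $\Coh_\dg(D,E)\simeq\Coh_{A_\infty}(D,E)\simeq\RRHom(D,E)$ in $\Hot$ for $D$ cofibrant — together with To\"en's Theorem \ref{theortoen}, so that both $-\sotimes D$ and $-\otimes D$ are left adjoint to the same functor $\RRHom(D,-)$ on $\Hot$. By uniqueness of adjoints (via the Yoneda lemma in $\Hot$), $C\sotimes D\cong C\otimes D$ in $\Hot$.

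The main obstacle is Theorem \ref{theorem03}, and within it the case where $D$ is an $I$-cell complex, flagged in the excerpt as Key-Lemma \ref{keylemma}. The difficulty is that Tabuada's abstract argument that $\hat C$ is a path object [Tab2, Prop. 2.0.11] does not directly transport to the twisted setting; one must instead give a hands-on construction of the factorization
\begin{equation}
\Coh_\dg(D,E)\xrightarrow{w}\Coh_\dg(D,\hat E)\xrightarrow{(p_0,p_1)}\Coh_\dg(D,E)\times\Coh_\dg(D,E)
\end{equation}
and prove that $w$ is a quasi-equivalence while $(p_0,p_1)$ is a fibration. I expect the real work to lie in tracking the coherence data: the path object $\hat E$ introduces interpolating objects and homotopies, and one must check cell-by-cell, along the $I$-cell filtration of $D$, that the induced maps on Hochschild-type complexes $\Coh(F,G)$ remain quasi-isomorphisms and satisfy the surjectivity needed for fibrancy. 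This inductive verification over the cells of $D$, with the signs in \eqref{eqd1} and the relations $(R_1)$--$(R_4)$ in play, is the technical heart and the step most likely to require the more direct proof of the path-object property promised in Lemma \ref{lemmaf2} and Lemma \ref{lemmax}.
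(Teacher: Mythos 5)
Your proposal follows essentially the same route as the paper: cofibrancy of $C\sotimes D$, descent of the strict adjunction \eqref{adj1} to $\Hot$ via the right homotopy relation through the path object $\Coh_\dg(D,\hat E)$ (the Key-Lemma being the technical core), and then Faonte plus To\"en plus Yoneda to identify $C\sotimes D$ with $C\otimes D$. The only small imprecision is that for general cofibrant $C,D$ (retracts of $I$-cell complexes rather than $I$-cell complexes themselves) $C\sotimes D$ is not ``evidently'' an $I$-cell complex; the paper first treats the $I$-cell case and then passes to retracts, and the same retract argument is needed to extend the Key-Lemma from $I$-cell complexes to all cofibrant $D$.
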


We prove Theorem \ref{theorht} in Section \ref{proofht} below.

The main step of the proof is to show that, for cofibrant $C$ and $D$, the adjunction \eqref{adj1} ``descends'' to the adjunction in the homotopy category of dg categories $\Hot$:
\begin{equation}\label{adj1hot}
\Hot(C\sotimes D,E)=\Hot(C,\Coh_\dg(D,E))
\end{equation}
As soon as \eqref{adj1hot} is established, Theorem \ref{theorht} follows from To\"{e}n's result stated in Theorem \ref{theortoen}, from Lemma \ref{lemmastep0}, Theorem \ref{theoremfa}, and from the Yoneda lemma.

Thus, the main step is to pass from \eqref{adj1} to \eqref{adj1hot}.

The rest of the paper is devoted to a proof of Theorem \ref{theorht}.

\comment
The proof is based on several Lemmas.
\begin{lemma}\label{lemmafin1}
Let $C,D$ be cofibrant. Then $C\sotimes D$ is also cofibrant.
\end{lemma}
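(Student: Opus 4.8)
The statement asserts two things about the twisted tensor product of cofibrant $C,D$: that $C\sotimes D$ is again cofibrant, and that it becomes isomorphic to $C\otimes D$ in $\Hot$. The plan is to prove cofibrancy \emph{directly}, by a semi-free (``$I$-cell'') analysis of the generators of $C\sotimes D$, and then to deduce the homotopy-type statement by descending the strict adjunction \eqref{adj1} to the adjunction \eqref{adj1hot} in $\Hot$ and comparing with the results of To\"en (Theorem \ref{theortoen}) and Faonte (Theorem \ref{theoremfa}). Cofibrancy is logically prior, since it is precisely what will allow $\Hot(C\sotimes D,E)$ to be computed by a path-object homotopy relation in the descent step.

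\emph{Cofibrancy.} Since $-\sotimes-$ is functorial in each variable it preserves retracts, and cofibrant objects are exactly the retracts of $I$-cell complexes; hence it suffices to show that $C\sotimes D$ is an $I$-cell complex whenever $C,D$ are. Fix such $C,D$, semi-free on generating graded quivers $X_C,X_D$ carrying well-founded cell filtrations. I would first exhibit a free generating quiver for the underlying graded $\k$-category of $C\sotimes D$: by $(R_1)$ it is freely generated by the arrows $f\otimes\id_d$ (for $f\in X_C$, $d\in\Ob(D)$) and $\id_c\otimes g$ (for $c\in\Ob(C)$, $g\in X_D$), together with the new arrows $\varepsilon(f;g_1,\dots,g_n)$; and using the multilinearity $(R_2)$, the identity-vanishing $(R_3)$, and the composition law $(R_4)$ of \eqref{eqsuper}, one may take $f$ to range only over generators $f\in X_C$ and the chains $(g_1,\dots,g_n)$ over composable nonidentity morphisms of $D$, with no further relations. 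The decisive point is then to equip this quiver with a well-founded filtration for which the differential of every generator is a $\k$-linear combination of products of \emph{strictly lower} generators. I would grade $\varepsilon(f;g_1,\dots,g_n)$ lexicographically by the length $n$, then by the $C$-cell rank of $f$, then by the total $D$-cell rank of $g_1,\dots,g_n$, placing the arrows $f\otimes\id_d$ and $\id_c\otimes g$ at the bottom. Reading \eqref{eqd1} (and \eqref{eqd0} for $n=1$) against this filtration, the term $\varepsilon(df;g_1,\dots,g_n)$ either lowers the $C$-rank or, when $df$ is a composite, becomes through \eqref{eqsuper} a product of two $\varepsilon$'s of lower $C$-rank; the terms $\varepsilon(f;\dots,dg_j,\dots)$ lower the $D$-rank; and the bracketed families $(\id_{c_1}\otimes g_n)\star\varepsilon(f;g_1,\dots,g_{n-1})$, $\varepsilon(f;g_2,\dots,g_n)\star(\id_{c_0}\otimes g_1)$ and $\varepsilon(f;\dots,g_{i+1}\circ g_i,\dots)$ all lower the length $n$. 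Thus the differential is strictly lower-triangular, so $C\sotimes D$ is semi-free, hence cofibrant.

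\emph{The homotopy-type identification.} With cofibrancy in hand I would establish \eqref{adj1hot}. For a cofibrant source the set $\Hot(C,-)$ is computed as $\Fun_\dg(C,-)$ modulo the right-homotopy relation defined by Tabuada's path object $\hat E$; applied to the cofibrant object $C\sotimes D$ this gives $\Hot(C\sotimes D,E)=\Fun_\dg(C\sotimes D,E)/\!\sim_{\hat E}$, two functors being identified exactly when they extend over $\hat E$. The strict adjunction \eqref{adj1} with target $\hat E$ turns such extensions into maps $C\to\Coh_\dg(D,\hat E)$, and by Theorem \ref{theorem03} the object $\Coh_\dg(D,\hat E)$ is a path object of $\Coh_\dg(D,E)$ for $D$ cofibrant, so right-homotopies through it compute $\Hot(C,\Coh_\dg(D,E))$. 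Since \eqref{adj1} intertwines the two endpoint evaluations, it descends to a bijection natural in $E$, which is precisely \eqref{adj1hot}. Finally I would combine \eqref{adj1hot} with the comparison $\Coh_\dg(D,E)\simeq\Coh_{A_\infty}(D,E)$ in $\Hot$ valid for $D$ cofibrant, with Faonte's Theorem \ref{theoremfa}, and with To\"en's Theorem \ref{theortoen}: both $-\sotimes D$ and $-\otimes D$ then become left adjoint in $\Hot$ to the \emph{same} functor $\Coh_{A_\infty}(D,-)\simeq\RRHom(D,-)$, so by uniqueness of adjoints and the Yoneda lemma they agree, the comparison being realised by the projection $p_{C,D}$ of Corollary \ref{corproj}. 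Evaluating at $C$ yields $C\sotimes D\cong C\otimes D$ in $\Hot$.

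\emph{Main obstacle.} The one step that is not formal is the path-object input, Theorem \ref{theorem03}: that $\Coh_\dg(D,\hat E)$ is a genuine path object of $\Coh_\dg(D,E)$ — not merely weakly equivalent to it, but with the two endpoint projections forming a fibration. The delicate case is $D$ an $I$-cell complex, which is exactly Key-Lemma \ref{keylemma}; there the semi-free description of $D$ must be fed into the explicit coherent-transformation (Hochschild) complexes in order to lift the required factorizations. I expect this, rather than the cofibrancy bookkeeping above or the purely categorical adjoint-uniqueness argument, to be the crux of the whole proof.
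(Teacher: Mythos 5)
Your proof of the actual statement is correct and is essentially the paper's own argument (Lemma \ref{lemmastep2}): reduce by Proposition \ref{propretract} and functoriality of $\sotimes$ to the case where $C,D$ are $I$-cell complexes, then exhibit $C\sotimes D$ as an $I$-cell complex on the free generators $f_i\otimes\id_y$, $\id_x\otimes g_j$ and $\varepsilon(f_i;s_1,\dots,s_n)$ with $f_i$ a generator of $C$ and the $s_k$ arbitrary (non-identity) morphisms of $D$ — in fact you supply more detail than the paper, which merely asserts this, though your ``total $D$-cell rank'' should be the multiset (or ordinal) extension of the cell-level order rather than a plain sum, since $d$ of a high-level generator can be a long path of lower-level ones whose levels sum to more. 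Note also that the lemma asserts only cofibrancy, so your second and third paragraphs (descending \eqref{adj1} to \eqref{adj1hot} and identifying $C\sotimes D\cong C\otimes D$ in $\Hot$) prove the separate Theorem \ref{theorht} and are not needed here, though they correctly mirror the paper's Section \ref{proofht}.
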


???

\qed 

Denote by $\k[t,dt]$ the skew-commutative dg algebra over $\k$ with generators $t$ in degree 0 and $dt$ in degree 1, and with the differential sending $t$ to $dt$. For any $a\in \k$ denote by $p_a\colon \k[t,dt]\to \k$ the dg algebra map, sending $t$ to $a$ and $dt$ to 0.

\begin{lemma}\label{lemmafin2}
Let $C$ be cofibrant. Then $\Hot(C,D)$ can be computed as the quotient set $\Fun_\dg(C,D)/\sim$, where two dg functors $F,G\colon C\to D$ are equivalent if there is a dg functor $H\colon C\to D\otimes \k[t,dt]$ such that $p_0\circ H=F$, $p_1\circ H=G$. 
\end{lemma}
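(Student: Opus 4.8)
The plan is to deduce the statement from standard Quillen homotopy theory, using the fact that in the Tabuada model structure every dg category is fibrant; in particular $D$ is fibrant. For $C$ cofibrant and $D$ fibrant, the set $\Hot(C,D)$ is the quotient of $\Fun_\dg(C,D)$ by the right homotopy relation determined by any good path object, and this relation is an equivalence relation (the classical Whitehead/Quillen theorem). As a good path object I would take Tabuada's $\hat{D}$, whose path-object property is recorded in Lemma \ref{lemmaf2}, so that $\Hot(C,D)=\Fun_\dg(C,D)/\!\sim_{\hat{D}}$. The whole statement then reduces to the claim that the relation $\sim$ defined via $D\otimes\k[t,dt]$ coincides with the right homotopy relation $\sim_{\hat{D}}$.

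First I would check that $D\otimes\k[t,dt]$ is a path object for $D$, at least in the weak sense. The input is that the unit $\k\to\k[t,dt]$ is a quasi-isomorphism, so that the induced dg functor $\iota\colon D\to D\otimes\k[t,dt]$ is a quasi-equivalence, and $(p_0,p_1)\colon D\otimes\k[t,dt]\to D\times D$ factors the diagonal since $(p_0,p_1)\circ\iota=(\id_D,\id_D)$. Moreover each $p_a\colon D\otimes\k[t,dt]\to D$ is a trivial fibration: it is surjective on $\Hom$-complexes and induces an isomorphism (indeed the identity) on $H^0$. Because $p_a\circ\iota=\id_D$ and $\iota$ is invertible in $\Hot$, both $p_0$ and $p_1$ equal $\iota^{-1}$ in $\Hot$; hence any $H$ with $p_0\circ H=F$ and $p_1\circ H=G$ forces $[F]=[G]$ in $\Hot(C,D)$. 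This yields the inclusion $\sim\ \subseteq\ \sim_{\hat{D}}$ essentially for free, and the same structure maps give reflexivity of $\sim$ (via $\iota\circ F$) and symmetry (via the involution $t\mapsto 1-t$, $dt\mapsto -dt$ of $\k[t,dt]$, which swaps $p_0$ and $p_1$).

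The main obstacle is the reverse inclusion $\sim_{\hat{D}}\ \subseteq\ \sim$, i.e. realizing an abstract homotopy class as an explicit $\k[t,dt]$-homotopy. The difficulty is that $(p_0,p_1)$ is \emph{not} a fibration — the diagonal $H^0(D)\to H^0(D)\times H^0(D)$ fails the isofibration condition — so one cannot simply lift a homotopy $K\colon C\to\hat{D}$ through $(p_0,p_1)$, even with $C$ cofibrant. My plan is to construct a comparison dg functor $\Theta\colon\hat{D}\to D\otimes\k[t,dt]$ lying over $D\times D$, that is with $p_0\circ\Theta=\partial_0$ and $p_1\circ\Theta=\partial_1$ for the endpoint projections $\partial_0,\partial_1$ of $\hat{D}$; then $H:=\Theta\circ K$ solves the problem. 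To build $\Theta$ I would use the explicit semifree (relative $I$-cell) description of $\hat{D}$ underlying the direct proof in Lemma \ref{lemmaf2}, defining $\Theta$ generator by generator and lifting each generator through the trivial fibrations $p_a$, arranging by an inductive choice that both endpoints are matched simultaneously. An alternative is to factor $(p_0,p_1)$ as $D\otimes\k[t,dt]\xrightarrow{\ \sim\ }P\twoheadrightarrow D\times D$ into a genuine good path object $P$, invoke independence of the right homotopy relation on the choice of good path object (valid since $D$ is fibrant) to get $\sim_{\hat{D}}=\sim_{P}$, and then transfer a $P$-homotopy back along the acyclic comparison, again using cofibrancy of $C$.

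I expect the construction of $\Theta$ (equivalently, the transfer of homotopies from $\hat{D}$ to $D\otimes\k[t,dt]$) to be the technical heart of the argument, and the one place where the explicit cellular form of $\hat{D}$ is genuinely needed rather than its abstract path-object property. I would also be careful about the one hypothesis driving everything, namely that $\k\to\k[t,dt]$ is a quasi-isomorphism, which is what makes $\iota$ a quasi-equivalence and $D\otimes\k[t,dt]$ a path object in the first place.
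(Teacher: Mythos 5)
You have set out to prove a statement that the paper itself never establishes: this lemma sits in a commented-out draft passage whose proof is left as ``???'', and the published text explicitly walks away from this route (``This simple construction is not generalized directly for the case of dg categories''), working instead with Tabuada's path object $\hat{D}$ throughout. The reason is that the statement is false as written. A dg functor $H\colon C\to D\otimes\k[t,dt]$ has a single underlying map on objects, and both $p_0$ and $p_1$ are the identity on objects, so $p_0\circ H$ and $p_1\circ H$ necessarily have the \emph{same} object map; hence the relation $\sim$ can only identify dg functors agreeing on objects, whereas the homotopy relation identifies more. Concretely, take $C=\underline{\k}$ (cofibrant, being the target of the generating cofibration $\varnothing\to\underline{\k}$) and let $D$ have two distinct objects $a,b$ which are isomorphic in $H^0(D)$. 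Then $[a]=[b]$ in $\Hot(\underline{\k},D)$, witnessed by the object $(a,b,f)$ of the Tabuada path object $\hat{D}$, but there is no $H\colon\underline{\k}\to D\otimes\k[t,dt]$ with $p_0\circ H=a$ and $p_1\circ H=b$. So $\Fun_\dg(C,D)/\!\sim$ surjects onto, but does not equal, $\Hot(C,D)$.

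This locates exactly where your argument breaks. You correctly diagnosed that $(p_0,p_1)\colon D\otimes\k[t,dt]\to D\times D$ fails the isofibration condition (F2) and that the hard direction is $\sim_{\hat{D}}\ \subseteq\ \sim$; but the comparison functor $\Theta\colon\hat{D}\to D\otimes\k[t,dt]$ over $D\times D$ that you defer to ``the technical heart'' cannot exist: on an object $(x,y,f)$ of $\hat{D}$ with $x\neq y$ it would have to take a value $z\in\Ob(D)$ satisfying $z=x$ and $z=y$ simultaneously. Your fallback (factor $(p_0,p_1)$ through a good path object $P$ and transfer a $P$-homotopy back) fails for the same reason: the transfer back is precisely the nonexistent lift. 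The first half of your write-up is sound --- each $p_a$ is indeed a trivial fibration, the relation $\sim$ does imply equality in $\Hot$, and reflexivity and symmetry hold --- and in fact $\sim$ does compute the homotopy relation on the subset of functors with a fixed object map, since (F1) does hold for $(p_0,p_1)$; but the object-level defect is fatal for the lemma as a whole. The correct replacement, and the one the paper actually uses, is $\Hot(C,D)=\Fun_\dg(C,D)/\!\sim_R$ with $\sim_R$ the right homotopy relation taken with respect to $\hat{D}$, whose objects $(x,y,f)$ with $f$ an $H^0$-isomorphism supply exactly the room that $D\otimes\k[t,dt]$ lacks; see Section \ref{sectionlrhot} together with Proposition \ref{proptabpath}.
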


???

\qed

{\it Proof of Theorem:}

We compute $\Hot(C\sotimes D,E)$ and $\Hot(C,\Coh_\dg(D,E))$ by Lemma \ref{lemmafin2}, as
\begin{equation}\label{eqam1}
\Hot(C\sotimes D,E)=\Fun_\dg(C\sotimes D,E)/\sim,\ \ \ \Hot(C,\Coh_\dg(D,E))=\Fun_\dg(C,\Coh_\dg(D,E))/\sim
\end{equation}
The first equation follows from Lemma \ref{lemmafin2}, because $C\sotimes D$ is cofibrant, by Lemma \ref{lemmafin1}.

The dg functors in the right-hand sides are identified, by Theorem \ref{prop1}. We need to ``upgrade'' this identification to the homotopy relations.

Lemma \ref{lemmafin2} states that $F\sim G$ if there are $H_1, H_2$ which belong to
\begin{equation}
H_1\in \Fun_\dg(C\sotimes D,E\otimes \k[t,dt]),\ \ H_2\in\Fun_\dg(C,\Coh_\dg(D,E)\otimes \k[t,dt])
\end{equation}
such that $p_0\circ H_i=F$, $p_1\circ H_i=G$, $i=1,2$.

One has:
$$
\Fun_\dg(C\sotimes D,E\otimes \k[t,dt])=\Fun_\dg(C,\Coh_\dg(D,E\otimes \k[t,dt]))
$$
by Theorem \ref{prop1}.

We need to find a correspondence between
\begin{equation}
\Fun_\dg(C,\Coh_\dg(D,E\otimes \k[t,dt]))\text{  and  }\Fun_\dg(C,\Coh_\dg(D,E)\otimes \k[t,dt])
\end{equation}
which agrees with the projections $p_0, p_1$ to $\Fun_\dg(C,\Coh_\dg(D,E))$.

We claim that there are maps of sets $\alpha,\beta$, making the diagram 
\begin{equation}
\xymatrix{
\Fun_\dg(C,\Coh_\dg(D,E\otimes \k[t,dt]))\ar[rr]<1ex>^{\alpha}\ar[ddr]_{p_a}&&\Fun_\dg(C,\Coh_\dg(D,E)\otimes \k[t,dt])\ar[ll]<1ex>^{\beta}\ar[ddl]^{p_a}\\
\\
&\Fun_\dg(C,\Coh_\dg(D,E))
}
\end{equation}
commutative, for $a=0,1$.

We construct dg functors $A,B$ making the diagram commutative, for $a=0,1$:
\begin{equation}
\xymatrix{
\Coh_\dg(D,E\otimes \k[t,dt])\ar[rr]<1ex>^{A}\ar[ddr]_{p_a}&&\Coh_\dg(D,E)\otimes \k[t,dt]\ar[ll]<1ex>^{B}\ar[ddl]^{p_a}\\
\\
&\Coh_\dg(D,E)
}
\end{equation}

\endcomment

\section{\sc Reminder on closed model categories}
\subsection{\sc The Homotopy relation}\label{sectionlrhot}
Let $\mathscr{M}$ be a closed model category [Q], [GJ, Ch. II, Section 1], [Hir, Ch. 7]. Recall that one derives a {\it homotopy relation} on $\Hom_\mathscr{M}(C,D)$ from the basic axioms of a closed model category. There are two such relations. 

The first one $\sim_L$ uses {\it a cylinder object} of $C$, and is an equivalence relation when $C$ is cofibrant, in which case the realation $f\sim_L g$ does not depend on the choice of a cylinder object. 

The second one $\sim_R$ uses {\it a path object} of $D$, and is an equivalence relation when $D$ is fibrant, in which case the relation $f\sim_R g$ does not depend on the choice of a path object.

Moreover, when $C$ is cofibrant and $D$ is fibrant, $f\sim_L g$ holds iff $f\sim_R g$, see Proposition \ref{proplr} below.

{\it A cylinder object} of an object $C$ of $\mathscr{M}$ is a commutative triangle 
\begin{equation}
\xymatrix{
C\sqcup C\ar[dr]^{\nabla}\ar[d]_{i}\\
\tilde{C}\ar[r]_{\sigma}&C
}
\end{equation}
where $\nabla\colon C\sqcup C\to C$ is the canonical map defined from the identity map $C\to C$ on each summand, $i$ is a cofibration, and $\sigma$ is a weak equivalence. 

The cylinder object always exists but is not unique. 

The {\it left homotopy relation} $f\sim_L g$, for $f,g\colon C\to D$, is defined for a given choice of a cylinder object of $C$. It is defined as a commutative diagram
\begin{equation}
\xymatrix{
C\sqcup C\ar[d]_{i}\ar[dr]^{(f,g)}\\
\tilde{C}\ar[r]_h&D
}
\end{equation}
where $(f,g)$ is the map defined via $f$ and $g$ on the summands $C$, and the data $C\sqcup C\xrightarrow{i}\tilde{C}$ comes from some choice of a cylinder object.

In general, $\sim_L$ is not an equivalence relation on $\Hom(C,D)$, but it is when $C$ is cofibrant. See Proposition \ref{proplr} below. 

Recall also the path objects and the right homotopy relation.

Let $\mathscr{M}$ be a closed model category, $D$ an object of $\mathscr{M}$.

A {\it path object} of $D$ is a commutative triangle
\begin{equation}\label{eqpath}
\xymatrix{
&\hat{D}\ar[d]^{p=(p_0,p_1)}\\
D\ar[ur]^s\ar[r]_{\Delta\hspace{12pt}}&D\times D
}
\end{equation}
where $\Delta$ is the diagonal map, $s$ is a weak equivalence, and $p$ is a fibration.  The path objects always exist but are not unique.

Assume a path object of $D$ is chosen. One says that $f,g\in\Hom(C,D)$ are {\it right homotopic} with respect to the path object if there is a commutative triangle
\begin{equation}\label{equationrhot}
\xymatrix{
&\hat{D}\ar[d]^{p=(p_0,p_1)}\\
C\ar[ur]^{\theta}\ar[r]_{(f,g)\hspace{12pt}}&D\times D
}
\end{equation}

In general, $\sim_R$ is not an equivalence relation on $\Hom(C,D)$, but it is, if $D$ is fibrant, see Proposition \ref{proplr} below. 

\begin{prop}\label{proplr}
Let $\mathscr{M}$ be a closed model category. The following statements are true:
\begin{itemize}
\item[(i)] For a cofibrant object $C$, the left homotomy relation $\sim_L$ on $\Hom(C,D)$ is an equivalence relation. Moreover, in this case, if $f\sim_L g$ for one choice of the cylinder object of $C$, then $f\sim_L g$ for any other choice,
\item[(ii)] For a fibrant object $D$, the right homotopy relation $\sim_R$ on $\Hom(C,D)$ is an equivalence relation. Moreover, in this case, if $f\sim_R g$ for one choice of the path object of $D$, then $f\sim_R g$ for any other choice,
\item[(iii)] Let $C$ be cofibrant and $D$ be fibrant. Then on $\Hom(C,D)$ both homotopy relations coincide: $f\sim_L g\Leftrightarrow f\sim_R g$.
\end{itemize}
\end{prop}
See [GJ, Cor. II 1.9] for a proof.

\qed

The {\it homotopy category} $\Ho(\mathscr{M})$ is defined as a category whose objects are the same as the objects of $\mathscr{M}$, and whose morphisms $\Ho(\mathscr{M})(C,D)$ is $\mathscr{M}(RQ(C),RQ(D))/\sim_L$, or equally, $\mathscr{M}(RQ(C),RQ(D))/\sim_R$, where $RQ(X)$ is a fibrant cofibrant replacement of the object $X$.
The reader is refered to [GJ, Ch. II, Section 1] for more detail. 

\subsection{\sc The closed model structure on the category of dg associoative algebras}
Recall that a dg associative algebra $A$ over $\k$ is called {\it semi-free} if there is an exhaustive ascending filtration $$A_0\subset A_1\subset A_2\subset \dots$$
where each $A_i$ is a dg associative algebra, the underlying graded algebra of $A_{i+1}$ is freely generated by $A_i$ and a set of elements $V_{i+1}=\{f_{i+1,1},\dots,f_{i+1,n_{i+1}}\}$, and 
$$
d(f_{i+1,k})\subset A_i \text{  for any $k$}
$$
(the latter codition for $i=-1$ means that $d(f_{0,k})=0$ for any $k$).

Similarly, let in the above definition $A_0$ be some dg associative algebra (with possibly non-zero differential), and one has an exhaustive ascending filatration $A_0\subset A_1\subset A_2\subset\dots$ on $A$, such that each $A_i$ is a dg associative algebra, the underlying graded algebra of $A_{i+1}$ is freely generated by $A_i$ and a set of elements $V_{i+1}=\{f_{i+1,1},\dots,f_{i+1,n_{i+1}}\}$, and 
$$
d(f_{i+1,k})\subset A_i \text{  for any $k$ and for $i\ge 0$}
$$
(that is, the condition for $i=-1$ is dropped). Then the imbedding $$i\colon A_0\to A$$ is called {\it a standard cofibration}.

Recall that the category of dg associative algebras over a field $\k$ admits a closed model structure due to Hinich [H1]. Its weak equivalences are quasi-isomorphisms, fibrations are component-wise surjective maps, and cofibrations are the maps satisfying the left lifting property with respect to acyclic fibrations, see [H1, Theorem 2.2.1]. One can prove that a map in this closed model structure is a cofibration if it is a retract of a standard cofibration, see [H1, Remark 2.2.5].

\subsection{\sc The closed model structure on the category of small dg categories}\label{sectioncmcdg}
Introduce some notations, cf. [Tab1, Sect. 2]. Denote by $D^n$ the complex
$$
\dots 0\to \underset{\deg=-n}{\k}\xrightarrow{\id}\underset{\deg=-n+1}{\k}\to 0\dots
$$
It can also be defined as the cone of the identity map of $\k[n-1]$.

Define two dg categories, $C(n)$ and $P(n)$, $n\in\mathbb{Z}$.

The dg category $C(n)$ has 2 objects, denoted by $a,b$,  and the morphisms $C(n)(a,a)=\k$, $C(n)(b,b)=\k$, $C(n)(b,a)=0$, $C(n)(a,b)=\k[n-1]$. 

The dg category $P(n)$ has 2 objects, denoted by $a^\prime,b^\prime$, and the morphisms $P(n)(a^\prime,a^\prime)=\k$, $P(n)(b^\prime,b^\prime)=\k$, $P(n)(b^\prime,a^\prime)=0$, $P(n)(a^\prime,b^\prime)=D^n$. 

There is a dg functor $s(n)\colon C(n)\to P(n)$. It sends $a$ to $a^\prime$, $b$ to $b^\prime$, and the corresponding map $C(n)(a,b)\to P(n)(a^\prime,b^\prime)$ is the map $s_n\colon \k[n-1]\to D^n$ which maps $\k[n-1]$ to the summand $\k[n-1]\subset D^n$, by the identity map.

Let $X$ be a dg category. Note that a dg functor $F\colon C(n)\to X$ is nothing but a pair of objects $x,y\in X$, and a closed homogeneous of degree $-r+1$ morphism $X(x,y)$. The dg functor $F$ factors as $C(n)\xrightarrow{s(n)}P(n)\to X$ if and only if the closed homogeneous morphism of degree $-r+1$ is a coboundary.

A dg functor $f\colon X\to Y$ is called {\it a relative $I$-cell complex} if there is an ascending filtration $X=Y_0\subset Y_1\subset Y_2\subset \dots$, and maps $f_n\colon X\to Y_n$  such that $j_n\circ f_n=f_{n+1}$ (where $j_n\colon Y_n\to Y_{n+1}$ are the embedding of the consequtive terms of the filtration), such that $f_0\colon X\to Y_0=X$ is the identity functor, and $f_{n+1}\colon X\to Y_{n+1}$ is obtained from $f_n\colon X\to Y_n$ by either of the following operations (i), (ii):
\begin{itemize}
\item[(i)] $Y_{n+1}=Y_n\sqcup \underline{\k}$ (here $\underline{\k}$ is a dg category with a single object whose complex of endomorphisms is $\k$), and $f_{n+1}$ is the composition $X\xrightarrow{f_n}Y_n\xrightarrow{\alpha}Y_n\sqcup\underline{\k}$, where $\alpha$ is the canonical map to the first summand;
\item[(ii)] we are given a dg functor $F\colon C(k)\to Y_n$ (for some $k\in\mathbb{Z}$), and define $Y_{n+1}$ as colimit of the pushout diagram:
\begin{equation}
\xymatrix{
C(k)\ar[r]^{F}\ar[d]_{s(k)}&Y_{n}\ar@{.>}[d]^{j_n}\\
P(k)\ar@{.>}[r]_{F^\prime}&Y_{n+1}
}
\end{equation}
Define $f_{n+1}\colon X_n\to Y_{n+1}$ as $f_{n+1}=j_n\circ f_n$.
\end{itemize}

A small dg category $C$ is called {\it an $I$-cell complex} if $\varnothing\to C$ is a relative $I$-cell complex. 

\begin{remark}{\rm
Strictly speaking, one needs to consider the filtrations labelled by all small ordinals $\lambda$ in the definition of a relative $I$-cell complex, and to use the transfinite induction, cf. [Hir, Ch. 10.1-10.5]. Although the small object argument in our case holds for all ordinals starting with $\aleph_0$, one needs to have more relative $I$-cell complexes and $I$-cell complexes for e.g. Proposition \ref{propretract} below to be true. We skip these technical issues addressing the interested reader to loc.cit.
}
\end{remark}

Recall that the category $\Cat_\dg(\k)$ of small dg categories over $\k$ admits a cofibrantly generated closed model structure, constructed in [Tab1].

Its weak equivalences are quasi-equivalences of dg categories.

Its set of generating cofibrations is $I=\{\alpha, s(n)\}_{n\in\mathbb{Z}}$, where $\alpha\colon \varnothing\to\underline{\k}$ is the unique dg functor from the initial object $\varnothing$ in $\Cat_\dg(\k)$ to $\underline{\k}$.

The fibrations are described as follows:

A dg functor $F\colon C\to D$ is called a {\it fibration} if the following conditions hold:
\begin{itemize}
\item[(F1)] for any two objects $x,y\in C$, the map $C(x,y)\to D(F(x),F(y))$ induced by $F$, is a fibration of complexes (that is, a component-wise surjection), 
\item[(F2)] for any $x\in C$ and any isomorphism $v\colon [F](x)\to y^\prime$ in $H^0(D)$ there exists {\it an isomorphism} $u\colon x\to y$ in $H^0(C)$ such that $[F](u)=v$.
\end{itemize}

In particular, any object is fibrant.

The cofibrations are defined as those morphisms which have the left lifting property with respect to the acyclic fibrations. 

One has the following explicit description of the cofibrations.
\begin{prop}\label{propretract}
Any cofibration for the closed model structure on $\Cat_\dg(\k)$ is a retract of a relative $I$-cell complex. 
\end{prop}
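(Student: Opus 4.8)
The plan is to deduce the statement from the general machinery of cofibrantly generated closed model categories, specializing to $\Cat_\dg(\k)$ the standard fact that cofibrations are exactly the retracts of relative cell complexes built from the generating cofibrations. The whole argument is the classical combination of the small object argument with the retract argument; the only input specific to dg categories is the identification of the class of acyclic fibrations.

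First I would record the defining feature of Tabuada's cofibrantly generated structure with generating cofibrations $I=\{\alpha, s(n)\}_{n\in\mathbb{Z}}$: a dg functor is an acyclic fibration precisely when it has the right lifting property against every element of $I$. Granting this, the definition of cofibrations as the dg functors having the left lifting property against acyclic fibrations identifies the cofibrations with the $I$-cofibrations, i.e. the maps with the left lifting property against all $I$-injectives. The easy half of the Proposition is then immediate: each relative $I$-cell complex has the left lifting property against $I$-injectives (it is a transfinite composite of pushouts of maps in $I$), and this class is closed under retracts, so retracts of relative $I$-cell complexes are cofibrations.

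For the asserted half, let $f\colon X\to Y$ be an arbitrary cofibration. I would apply the small object argument to the set $I$ to factor $f$ as
\begin{equation}
X\xrightarrow{\ g\ }Z\xrightarrow{\ p\ }Y,
\end{equation}
where $g$ is a relative $I$-cell complex and $p$ has the right lifting property against $I$, hence is an acyclic fibration. Since $f$ is a cofibration and $p$ is an acyclic fibration, the square
\begin{equation}
\xymatrix{
X\ar[r]^{g}\ar[d]_{f}&Z\ar[d]^{p}\\
Y\ar[r]_{\id}\ar@{.>}[ur]^{h}&Y
}
\end{equation}
admits a lift $h\colon Y\to Z$ with $h\circ f=g$ and $p\circ h=\id_Y$. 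The diagram
\begin{equation}
\xymatrix{
X\ar[r]^{\id}\ar[d]_{f}&X\ar[r]^{\id}\ar[d]_{g}&X\ar[d]^{f}\\
Y\ar[r]_{h}&Z\ar[r]_{p}&Y
}
\end{equation}
then exhibits $f$ as a retract of the relative $I$-cell complex $g$, which is exactly the assertion.

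The main obstacle is the justification of the small object argument itself: one must verify that the domains $\varnothing$ and $C(n)$ of the generating cofibrations are small relative to relative $I$-cell complexes, so that the transfinite construction terminates and genuinely yields a relative $I$-cell complex in the sense defined above. As already flagged in the Remark, a fully rigorous treatment must index the cell attachments by an ordinal large enough for the smallness to hold—rather than by $\mathbb{N}$, as in the simplified definition—and invoke the transfinite small object argument of [Hir, Ch.~10]. Apart from this bookkeeping, the only genuinely dg-categorical point, as opposed to formal nonsense, is the identification of the $I$-injectives with the acyclic fibrations, which rests on the explicit description of the functors $s(n)$ and of the fibrations (F1)--(F2) recalled above.
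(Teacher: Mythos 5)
Your argument is correct and is essentially the paper's own proof: the paper simply cites the general description of cofibrations in a cofibrantly generated model category ([Hir, Prop.~11.2.1(1)]) together with Tabuada's identification of the generating cofibrations, and what you have written is precisely the standard small-object-argument factorization followed by the retract argument that proves that general fact. Your closing caveat about transfinite indexing matches the Remark already made in the paper.
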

\begin{proof}
It follows from the general description of cofibrations in a cofibrantly generated closed model category, given in [Hir, Prop. 11.2.1 (1)], and from the description of generating cofibrations, see [Tab1, Th. 1.8] for more detail.
\end{proof}

\subsection{\sc Explicit path objects}
\subsubsection{\sc The case of dg associative algebras}
Assume $\cchar \k=0$. Consider the dg commutative algebra (the algebraic de Rham complex of $\mathbb{A}_\k^1$), equal to $\k[t,dt]$, $d(t)=dt, d(dt)=0$. It is well-known that $\k[t,dt]$ is quasi-isomorphic to $\k[0]$. For a dg associative $C$, set
$$
\hat{C}=C\otimes\k[t,dt]
$$
The two projections $p_0,p_1\colon C\otimes \k[t,dt]\to C$ are given by two maps of dg algebras $p_0^\prime,p_1^\prime\colon \k[t,dt]\to \k$, which are evaluations at $t=0$ and at $t=1$. The map $(p_0,p_1)\colon C\otimes \k[t,dt]\to C\times C$ is term-wise surjective.

The map $s\colon C\to C\otimes \k[t,dt]$ is given by the dg algebra map $\k\to\k[t,dt]$.

It is clear that the diagram 
$$
\xymatrix{
&C\otimes \k[t,dt]\ar[d]^{(p_0,p_1)}\\
C\ar[ru]^{s}\ar[r]_{\Delta}&C\times C
}
$$
is a path object.

This simple construction is not generalized directly for the case of dg categories. 

\subsubsection{\sc The case of small dg categories}\label{sectionpathtab}
Here we provide a detailed account on the Tabuada path object $\hat{C}$ for a small dg category $C$, see [Tab2, Sect. 2]. For our needs, we provide a direct proof that it is a path object, replacing the implicit part in the proof of [Tab2, Prop. 2.0.11] by an explicit argument. It will make us possible to prove that for small dg categories $C,D$, with $C$ an $I$-cell complex, both dg categories $\widehat{\Coh_\dg(C,D)}$ and $\Coh_\dg(C,\hat{D})$, provide path objects of the dg category $\Coh_\dg(C,D)$.

Let $C$ be a small category. The category $\hat{C}$ has as objects the triples $(x,y,f)$, where $x,y\in \Ob(C)$, $\phi$ a closed degree 0 morphism $f\colon x\to y$, such that the corresponding morphism $[f]\in H^0(C)(x,y)$ is an isomorphism. 

The complexes of morphisms defined (as $\mathbb{Z}$-graded modules) as
$$
\hat{C}(x\xrightarrow{f}y, w\xrightarrow{g}z)=C(x,w)\oplus C(y,z)\oplus C(x,z)[-1]
$$
A homogeneous morphism $\phi$ of degree $r$ is given by a matrix
$$
\phi=\begin{bmatrix}m_1&0\\h&m_2\end{bmatrix}
$$
and its differential is given by
\begin{equation}
d\phi=\begin{bmatrix}
dm_1&0\\
dh+g\circ m_1- (-1)^r m_2\circ f& dm_2
\end{bmatrix}
\end{equation}
where $m_1$ and $m_2$ are homogeneous of degree $r$, and $h$ is homogeneous of degree $r-1$.

For $\phi$ as above and $$\phi^\prime=\begin{bmatrix}m_1^\prime&0\\h^\prime&m_2^\prime\end{bmatrix}\in\hat{C}(w\xrightarrow{g}z, p\xrightarrow{t}q)$$
the composition $\phi^\prime\circ \phi$ is defined as
\begin{equation}
\phi^\prime\circ\phi=\begin{bmatrix}
m_1^\prime m_1&0\\
h^\prime m_1+m_2^\prime h& m_2^\prime m_2
\end{bmatrix}\in\hat{C}(x\xrightarrow{f}y,p\xrightarrow{t}q)
\end{equation}
To form diagram \eqref{eqpath}, one needs to define dg functors $s\colon C\to \hat{C}$ and projections $p_0,p_1\colon \hat{C}\to C$. 
The dg functor $s$ sends $x\in C$ to $x\xrightarrow{\id}x$ in $\hat{C}$, and sends a morphism $m\in C(x,w)$ to the morphism $\hat{C}(x\xrightarrow{\id}x,w\xrightarrow{\id}w)$ given by the
matrix 
$$
\begin{bmatrix}
m&0\\
0&m
\end{bmatrix}
$$
The projection $p_0$ (corresp., $p_1$) sends $x\xrightarrow{f}y$ to $x$ (corresp., to $y$), and is extended to morphisms in the natural way. The dg functor $(p_0,p_1)$ sends the morphism $\phi\in\hat{C}(x\xrightarrow{f}y,w\xrightarrow{g}z)$ as above to the morphism in $(C\times C)(x\times y, w\times z)$, given as $(m_1\times m_2)$.

One has:
\begin{prop}\label{proptabpath}
For a small dg category $C$, the dg category $\hat{C}$ embedded to the diagram
\begin{equation}
\xymatrix{
&\hat{C}\ar[d]^{(p_0,p_1)}\\
C\ar[ur]^{s}\ar[r]_{\Delta}&C\times C
}
\end{equation}
is a path object. That is, the dg functor $s$ is a weak equivalence, and the dg functor $(p_0,p_1)$ is fibration. 
\end{prop}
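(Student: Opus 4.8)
The plan is to verify separately the two conditions defining a path object in \eqref{eqpath}: that $(p_0,p_1)\colon\hat{C}\to C\times C$ is a fibration, i.e.\ satisfies \textup{(F1)} and \textup{(F2)}, and that $s\colon C\to\hat{C}$ is a quasi-equivalence, i.e.\ induces a quasi-isomorphism on each Hom-complex and an equivalence $H^0(s)$. The verifications that $d^2=0$ on $\hat{C}$ and that all signs are consistent are routine, and I will take them for granted; the real content lies in a single criterion for recognizing isomorphisms in $H^0(\hat{C})$, together with an explicit contraction of the Hom-complexes of $s$.

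The criterion I would establish first is the following: a closed degree-$0$ morphism $u=\begin{bmatrix}m_1&0\\h&m_2\end{bmatrix}$ in $\hat{C}(x\xrightarrow{f}y,\,w\xrightarrow{g}z)$ is an isomorphism in $H^0(\hat{C})$ if and only if $[m_1]\in H^0(C)(x,w)$ and $[m_2]\in H^0(C)(y,z)$ are isomorphisms. One direction is formal, since $[m_i]=H^0(p_i)([u])$ and $p_0,p_1$ are dg functors. For the converse one picks closed degree-$0$ representatives $n_1,n_2$ of $[m_1]^{-1},[m_2]^{-1}$ and looks for an inverse of the form $\begin{bmatrix}n_1&0\\k&n_2\end{bmatrix}$; in $H^0$ the diagonals of the two composites are automatically the identities, and the only thing to arrange is the degree-$(-1)$ entry $k$, which is pinned down up to a coboundary by a closed, cohomologically trivial expression. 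Producing $k$ is the one genuinely non-formal computation, and it is exactly here that closedness of $f$ and $g$ enters.

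Granting the criterion, the fibration property is short. Condition \textup{(F1)} holds because on Hom-complexes $(p_0,p_1)$ is the projection $C(x,w)\oplus C(y,z)\oplus C(x,z)[-1]\to C(x,w)\oplus C(y,z)$, which is degreewise surjective. For \textup{(F2)}, given an object $x\xrightarrow{f}y$ and an isomorphism $(v_1,v_2)$ from $(x,y)$ to $(w,z)$ in $H^0(C\times C)=H^0(C)\times H^0(C)$, I would choose closed degree-$0$ representatives $m_1,m_2$ of $v_1,v_2$ and a closed degree-$0$ morphism $g\colon w\to z$ representing the isomorphism $v_2\,[f]\,v_1^{-1}$. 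Then $m_2\circ f-g\circ m_1$ is closed with vanishing cohomology class, hence equals $dh$ for some $h\in C(x,z)^{-1}$, so that $u=\begin{bmatrix}m_1&0\\h&m_2\end{bmatrix}$ is a closed degree-$0$ morphism $(x,y,f)\to(w,z,g)$ lifting $(v_1,v_2)$; by the criterion it is an isomorphism of $H^0(\hat{C})$.

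Finally, to see that $s$ is a quasi-equivalence, note that on Hom-complexes it is the map $s_*\colon C(x,w)\to C(x,w)\oplus C(x,w)\oplus C(x,w)[-1]$, $m\mapsto(m,m,0)$, with chain-level retraction $p_{0*}(a,b,c)=a$. I would prove $s_*$ is a quasi-isomorphism by exhibiting an explicit degree-$(-1)$ homotopy contracting $\id-s_*p_{0*}$, built from the third summand $C(x,w)[-1]$ (equivalently, via the short exact sequence $0\to C(x,w)[-1]\to\hat{C}(s(x),s(w))\to C(x,w)^{\oplus2}\to0$, whose connecting map is surjective with kernel the diagonal); this explicit homotopy is what the direct approach supplies and what is reused later in Lemma \ref{lemmax}. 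Full faithfulness of $H^0(s)$ then follows at once, and for essential surjectivity the morphism $\begin{bmatrix}\id_x&0\\0&f\end{bmatrix}\colon s(x)=(x,x,\id)\to(x,y,f)$ has invertible diagonal $\id_x,f$ (as $[f]$ is an isomorphism by the very definition of the objects of $\hat{C}$), hence is an isomorphism in $H^0(\hat{C})$ by the criterion, so every object is isomorphic to one in the image of $s$. The main obstacle is the isomorphism criterion of the second paragraph: both \textup{(F2)} and essential surjectivity reduce to it, and it is the unique place where the homotopy component of an inverse must be built by hand rather than read off formally.
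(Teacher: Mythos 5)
Your proposal is correct in outline but organized quite differently from the paper's proof. The paper does not prove a general recognition criterion for isomorphisms in $H^0(\hat{C})$; instead it reduces (F2) to Lemma \ref{lemmaf2}, where a \emph{specific} lift is written down ($g=bfa'$, $\xi=-bfh_x$) together with an explicit inverse ($a'$, $b'$, $\mu=h_yfa'$) and explicit homotopies $\Gamma,\Gamma'$. The engine making both composites work simultaneously is Kontsevich's Lemma \ref{lemmamax}: one may choose homotopy-inverse data satisfying the extra coherence $fh_X-h_Yf=dr$, and it is exactly this relation (via $r_1,r_2$ in \eqref{f2lemma4bis}) that saves the ``tricky'' composite $\Phi\Phi'$. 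Your route --- first prove that any closed degree-$0$ morphism of $\hat C$ with $H^0$-invertible diagonal entries is invertible in $H^0(\hat C)$, then deduce (F2) and essential surjectivity of $H^0(s)$ softly --- is valid and arguably cleaner for the proposition in isolation. One caution on your criterion: for an arbitrary closed choice of the corner entry $k$ of the candidate inverse, the obstruction to $vu\sim\id$ is a class in $H^{-1}(C(x,y))$ that is \emph{not} automatically zero; you must either use the freedom to shift $k$ by a closed element (killing one obstruction, then producing left and right inverses separately and concluding by the usual one-sided-inverse argument), or argue via the two-step filtration of the Hom-complexes and Yoneda in $H^0$. Your phrase ``pinned down by a cohomologically trivial expression'' elides this, though you do correctly flag it as the only non-formal step. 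Finally, note what the paper's explicitness buys and your abstraction does not: Lemma \ref{lemmax} later perturbs $g$ to a cohomologous $\bar g$ and must re-derive \emph{all} the other dashed arrows by explicit counter-terms ($\bar\xi=\xi-\varkappa a$, $\bar\mu=\mu+b'\varkappa$, $\delta=\epsilon'+\varkappa h_w+h_z\varkappa$); this is the reason the author insists on a computational proof of Lemma \ref{lemmaf2} rather than a recognition principle, and a proof of Proposition \ref{proptabpath} by your method would leave that later argument without its raw material.
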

\begin{proof}
The statement that $s$ is a weak equivalence is standard, and the reader is referred to [Tab2, Prop. 2.0.11] for a proof. The condition (F1) of a fibration (see Section \ref{sectioncmcdg}) is tautological. The hard part is the property (F2). As we've said, we provide a direct proof of this statement, different from loc.cit. It is a bit computational. We use this direct approach in Lemma \ref{lemmax} below, in the proof of Theorem \ref{theorht}.

In this setting, (F2) is the following statement.
\begin{lemma}\label{lemmaf2}
Let $x\xrightarrow{f}y$ be an object of $\hat{C}$, and let $x\xrightarrow{a}w$ and $y\xrightarrow{b}z$ be closed degree 0 morphisms which are isomorphisms in $H^0(C)$. Then the solid arrow diagram below 
\begin{equation}\label{f2lemma2}
\xymatrix{
x\ar[r]^a\ar[d]_{f}\ar@{.>}[dr]^{\xi}&w\ar@{.>}[d]^{g}\\
y\ar[r]_{b}&z
}
\end{equation}
can be reconstructed by the dashed arrows to a closed morphism 
$\Phi\in \hat{C}(x\xrightarrow{f}y, w\xrightarrow{g}z)$, that is
\begin{equation}\label{f2lemma3}
d\xi=b\circ f- g\circ a
\end{equation}
Moreover, $\Phi$ becomes an isomorphism in $H^0(\hat{C})$.

\end{lemma}
\begin{proof}

Recall the following classic fact, due to M.Kontsevich [Ko, Lecture 6]:
\begin{lemma}\label{lemmamax}
Let $C$ be a dg category, and let $f\colon X\to Y$ be a closed degree 0 morphism, such that $[f]$ is an isomorphism in $H^0(C)$. 
Then there exist the following data: a closed degree 0 morphism $g\colon Y\to X$, morphisms $h_X\in \Hom^{-1}_C(X,X)$, $h_Y\in\Hom^{-1}_C(Y,Y)$, and a morphism $r\in \Hom^{-2}_C(X,Y)$, such that
\begin{equation}\label{max1}
gf=\id_X+dh_X,\ \ fg=\id_Y+dh_Y
\end{equation}
\begin{equation}\label{max2}
fh_X-h_Yf=dr
\end{equation}
\end{lemma}
Of course, \eqref{max1} is trivial. The statement of Lemma essentially means that it is always possible to choose $g,h_X,h_Y$ such that \eqref{max2} holds.
\begin{proof}
Choose $g,h_X,h_Y$ in an arbitrary way. Then replace $h_Y$ by
\begin{equation}
h^\prime_Y=h_Y+fh_Xg-h_Yfg
\end{equation}
(and keep $g$ and $h_X$ unchanged).

It is claimed that $(f,g,h_X,h_Y^\prime)$ satisfy both \eqref{max1} and \eqref{max2}. 
The equation $fg=\id_Y+dh^\prime_Y$ is checked directly. For the equation \eqref{max2}, one has:
$$
fh_X-h^\prime_Yf=fh_X+h_Yfgf-h_Yf-fh_Xgf=d(-h_Yfh_X+fh_Xh_X)
$$
\end{proof}
\begin{remark}{\rm
V.Drinfeld constructed in [Dr, 3.7] a semi-free dg category with 2 objects $a,b$, which is a resolution of the $\k$-linear envelope of the ordinary category with two objects $a,b$, having exactly 1 morphism between any two objects. The construction was inspired by the Lemma above. The Drinfeld dg category has a fundamental value in Tabuada's construction [Tab1] of closed model structure on the category of small dg categories.
}
\end{remark}
We continue proving Lemma \ref{lemmaf2}.

Find closed degree 0 maps $a^\prime\colon w\to x$ and $b^\prime \colon z\to y$ which are inverse in $H^0(C)$:
\begin{equation}\label{f2lemma4}
\begin{aligned}
\ &aa^\prime=\id_w+dh_w\\
&a^\prime a=\id_x+dh_x\\
&bb^\prime=\id_z+dh_z\\
&b^\prime b=\id_y+dh_y
\end{aligned}
\end{equation}
and
\begin{equation}\label{f2lemma4bis}
\begin{aligned}
\ &a^\prime h_w-h_xa^\prime=dr_1\\
&bh_y-h_zb=dr_2
\end{aligned}
\end{equation}
(It is possible by Lemma \ref{lemmamax}).

Now define $g$ and $\xi$ as
\begin{equation}\label{f2lemma5}
\begin{aligned}
\ &g:=bfa^\prime\\
&\xi:=-bfh_x
\end{aligned}
\end{equation}
One checks that \eqref{f2lemma3} holds, that is, we have constructed a morphism $\Phi\in \hat{C}(x\xrightarrow{f}y,w\xrightarrow{g}z)$ such that $(p_0,p_1)(\Phi)=(a,b)$. It proves the first assertion. 

It remains to show that $\Phi$ becomes an isomorphism in $H^0(\hat{C})$. 

Define a morphism $\Phi^\prime\in \hat{C}(w\xrightarrow{g}z, x\xrightarrow{f}y)$ as
\begin{equation}\label{f2lemma6}
\xymatrix{
w\ar[r]^{a^\prime}\ar[d]_{g}\ar@{.>}[rd]^{\mu}&x\ar[d]^{f}\\
z\ar[r]_{b^\prime}&y
}
\end{equation}
with
\begin{equation}\label{f2lemma7}
\mu=h_yfa^\prime
\end{equation}
One has
\begin{equation}\label{f2lemma8}
d\mu=b^\prime g-fa^\prime
\end{equation}
that is, $\Phi^\prime$ is closed in $\hat{C}$.

It remains to show that
\begin{equation}\label{f2lemma9}
\begin{aligned}
\ &\Phi^\prime \Phi=\id({x\xrightarrow{f}y})+d\Gamma\\
&\Phi \Phi^\prime=\id({w\xrightarrow{g}z})+d\Gamma^\prime
\end{aligned}
\end{equation}
for some morphisms $\Gamma\in \hat{C}(x\xrightarrow{f}y,x\xrightarrow{f}y),\Gamma^\prime\in\hat{C}(w\xrightarrow{g}z,w\xrightarrow{g}z)$.

The composition $\Phi^\prime\Phi$ is
\begin{equation}
\xymatrix{
x\ar[rr]^{a^\prime a}\ar[d]_{f}\ar[rrd]^{s}&&x\ar[d]^{f}\\
y\ar[rr]_{b^\prime b}&&y
}
\end{equation}
where
\begin{equation}
s=-b^\prime bfh_x+h_yfa^\prime a=(-fh_x+h_yf)+d(-h_yfh_x)
\end{equation}

Define $\Gamma$ as 
\begin{equation}\label{f2lemma10}
\xymatrix{
x\ar[r]^{h_x}\ar[d]_{f}\ar[dr]^{\epsilon}&x\ar[d]^{f}\\
y\ar[r]_{h_y}&y
}
\end{equation}
where
\begin{equation}\label{f2lemma11}
\epsilon=-h_yfh_x
\end{equation}

Quite surprisingly, the case of composition $\Phi\Phi^\prime$ is more tricky. One needs Lemma \ref{lemmamax} for this case.

The composition $\Phi\Phi^\prime$ is equal to
\begin{equation}
\xymatrix{
w\ar[rr]^{aa^\prime}\ar[d]_{g}\ar[rrd]^t&&w\ar[d]^{g}\\
z\ar[rr]_{bb^\prime}&&z
}
\end{equation}
where 
\begin{equation}
t=bh_yfa^\prime-bfh_xa^\prime=(h_zb+dr_2)fa^\prime-bf(a^\prime h_w-dr_1)=(h_zg-g h_w)  +d(r_2 fa^\prime-bfr_1)
\end{equation}
where $r_1,r_2$ are defined in \eqref{f2lemma4bis}.

It follows that $\Phi\Phi^\prime=\id(w\xrightarrow{g}z)+d\Gamma^\prime$, where $\Gamma^\prime$ is equal to 
\begin{equation}
\xymatrix{
w\ar[r]^{h_w}\ar[d]_{g}\ar[rd]^{\epsilon^\prime}&w\ar[d]^{g}\\
z\ar[r]_{h_z}&z
}
\end{equation}
where
\begin{equation}\label{eqstrange}
\epsilon^\prime=r_2 fa^\prime-bfr_1
\end{equation}

\end{proof}

\end{proof}

\section{\sc A Proof of Theorem \ref{theorht}}\label{proofht}
\subsection{\sc The idea}
Let $C$ and $D$ be cofibrant. We need to show that the isomorphism
\begin{equation}\label{again1}
\Fun_\dg(C\sotimes D,E)\simeq \Fun_\dg(C,\Coh_\dg(D,E))
\end{equation}
descends to a map in the homotopy category (and that this map is an isomorphism):
\begin{equation}\label{again2}
\Hot(C\sotimes D,E)\simeq \Hot(C,\Coh_\dg(D,E))
\end{equation}
For $X$ cofibrant, we derive the equivalence relation on $\Fun_\dg(X,Y)$ via a path object $\hat{Y}$ of $Y$, as in \eqref{equationrhot}. That is, we consider the {\it right} homotopy relation $\sim_R$. 

For a small dg category $Y$, denote by $\hat{Y}$ the Tabuada path object of $Y$, see Section \ref{sectionpathtab}.
It is a part of the commutative diagram
$$
\xymatrix{&\hat{Y}\ar[d]^{(p_0,p_1)}\\
Y\ar[ur]^{s}\ar[r]_{\Delta}&Y\times Y}
$$
where $s$ is a weak equivalence and $(p_0,p_1)$ is a fibration.

One has from \eqref{again1}:
\begin{equation}\label{again3}
\Fun_{dg}(C\sotimes D,\hat{E})\simeq \Fun_\dg(C,\Coh_\dg(D,\hat{E}))
\end{equation}
To derive \eqref{again2} from \eqref{again3}, one needs to show
\begin{klemma}\label{keylemma}
Let $C,D$ be small dg categories, with $C$ an $I$-cell complex. Then $\Coh_\dg(C,\hat{D})$ is a path object of $\Coh_\dg(C,D)$.
That is, in the natural diagram
\begin{equation}
\xymatrix{&\Coh_\dg(C,\hat{D})\ar[d]^{(p_{0*},p_{1*})}\\
\Coh_\dg(C,D)\ar[ur]^{s_*}\ar[r]_{\Delta\hspace{12mm}}&\Coh_\dg(C,D)\times\Coh_\dg(C,D)
}
\end{equation}
the map $s_*$ is a weak equivalence and the map $(p_{0*},p_{1*})$ is fibration.

Here the maps $p_{0*},p_{1*}\colon \Coh_\dg(C,\hat{D})$ are induced by the maps $p_0,p_1\colon \hat{D}\to D$, and the map $s_*$ is induced by the map  $s\colon D\to \hat{D}$.
\end{klemma}

The statement that $s_*$ is a weak equivalence follows from Corollary \ref{corollstep0}(ii) below, because $s\colon D\to \hat{D}$ is a weak equivalence by Proposition \ref{proptabpath}.

Furthermore, the axiom (F1) of fibrations (see Section \ref{sectioncmcdg}) holds for $(p_{0*},p_{1*})$ by elementary reasons.

The hardest part is to prove the axiom (F2).  It is given in Section \ref{sectionkl} below. In Section \ref{klimpliesth} we prove Theorem \ref{theorht}, assuming Key-Lemma \ref{keylemma}.

\subsection{\sc Theorem \ref{theorht} follows from Key-Lemma \ref{keylemma}}\label{klimpliesth}
Here we deduce Theorem \ref{theorht} from Key-Lemma \ref{keylemma}. The deduction uses quite standard arguments. It is diveded into several steps.

\vspace{3mm}

{\it Step 0.}
\begin{lemma}\label{lemmastep0}
Let $C,D$ be small dg categories, with $C$ cofibrant. Then the natural imbedding 
$$
i\colon \Coh_\dg(C,D)\to\Coh_{A_\infty}(C,D)
$$
is a weak equivalence.
\end{lemma}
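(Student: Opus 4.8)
The plan is to separate the two conditions in the definition of a quasi-equivalence and to note that only essential surjectivity carries any content. For two \emph{strict} dg functors $F,G\colon C\to D$ the higher Taylor components of $F$ and $G$ vanish, so the twisted $C$-bimodule $\Hom_D(F(-),G(-))$ entering the $A_\infty$ definition of $\Coh(F,G)$ is the strict one; hence the complex $\Coh_{A_\infty}(C,D)(F,G)$ \emph{equals} $\Coh_\dg(C,D)(F,G)$ and $i$ is the identity on Hom-complexes. Thus condition (a) of a quasi-equivalence (quasi-isomorphism on Hom-complexes) is automatic, and the lemma reduces to the statement that, for $C$ cofibrant, $H^0(i)$ is essentially surjective, i.e.\ every $A_\infty$ functor $F\colon C\to D$ is isomorphic in $H^0(\Coh_{A_\infty}(C,D))$ to the image of a strict dg functor.

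First I would reduce essential surjectivity to the case that $C$ is an $I$-cell complex. By Proposition \ref{propretract} a cofibrant $C$ is a retract of an $I$-cell complex $\tilde C$; write $\iota\colon C\to\tilde C$, $r\colon\tilde C\to C$ with $r\circ\iota=\id_C$. Given $F\colon C\to D$, form the $A_\infty$ functor $F\circ r\colon\tilde C\to D$. Granting the $I$-cell case, there is a strict dg functor $G'\colon\tilde C\to D$ with $F\circ r\cong i(G')$ in $H^0(\Coh_{A_\infty}(\tilde C,D))$. Since pullback $\iota^{*}$ along the dg functor $\iota$ is itself a dg functor, it preserves $H^0$-isomorphisms, and using $r\circ\iota=\id_C$ one obtains
\begin{equation*}
F=\iota^{*}(F\circ r)\cong \iota^{*}i(G')=i(G'\circ\iota),
\end{equation*}
with $G'\circ\iota$ again a strict dg functor. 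This disposes of the retract.

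The heart of the proof is the $I$-cell case, which I would handle by (transfinite) induction along the cell filtration $\varnothing=Y_0\subset Y_1\subset\cdots$ defining $C$. Put $G:=F$ on objects, and build simultaneously the strict functor $G$ and an $A_\infty$-natural isomorphism $\Theta\colon F\Rightarrow G$, i.e.\ a closed, $H^0$-invertible degree-$0$ coherent transformation. At a stage that freely adjoins a generating morphism $m$ (an attachment along $s(k)\colon C(k)\to P(k)$, with $dm$ already living in the previous stage), extending $G$ and $\Theta$ over $m$ amounts to solving a single inhomogeneous linear equation in the Hom-complexes of $D$ for the value $G(m)$ and the new components of $\Theta$; its inhomogeneous part is assembled from $F$, from $\Theta$ on the previous stage, and from the already-defined $G(dm)$. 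Freeness of $m$ and the fact that $dm$ lies in the previous stage make this equation solvable, keep $G$ a \emph{strict} dg functor and $\Theta$ a coherent isomorphism, and the object-adjunction steps and the limit stages are immediate.

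I expect this inductive rectification to be the main obstacle: one must organise the simultaneous construction of $G$ and of \emph{all} components of $\Theta$ so that strictness of $G$ and the closedness and invertibility of $\Theta$ are preserved at each stage, with signs matching those in the differential of $\varepsilon(f;g_1,\dots,g_n)$. An alternative, more formal route would invoke Proposition \ref{fquis}: a functorial semifree replacement $L(C)$ satisfying $\Fun_\dg(L(C),-)=\Fun_{A_\infty}(C,-)$ together with a quasi-equivalence $\varkappa\colon L(C)\to C$ produces a weak equivalence $\Coh_\dg(L(C),D)\to\Coh_{A_\infty}(C,D)$ through which $i$ factors up to weak equivalence, reducing the claim to \ref{fquis} and the comparison along $\varkappa$.
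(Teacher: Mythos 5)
Your overall architecture is sound and genuinely different from the paper's argument. The paper does not split off fully-faithfulness the way you do; instead it factors $i$ through $\Coh_\dg(\Cobar(\Bar(C)),D)$ via $t=p^*$ and the comparison functor $s$, shows both are quasi-isomorphisms on all Hom-complexes, and then uses the cofibrancy of $C$ exactly once: the left lifting property of $\varnothing\to C$ against the acyclic fibration $p\colon\Cobar(\Bar(C))\to C$ produces a strict section $q$ with $p\circ q=\id_C$, and $q^*$ is surjective on objects and a quasi-isomorphism on Homs, which yields the essential surjectivity on $H^0$ with no induction at all. Your closing ``alternative, more formal route'' is essentially the paper's proof. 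Your opening observation --- that between strict dg functors the twisting of the bimodule $\Hom_D(F(-),G(-))$ vanishes, so $i$ is literally the identity on Hom-complexes --- is correct and in fact cleaner than the paper's treatment of condition (a); the retract reduction via $\iota^*$ and $r\circ\iota=\id_C$ is also fine.

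The one place where your main route is thinner than it needs to be is the $I$-cell induction, and this is a real gap as written. Extending over a generator $m$ is not ``a single inhomogeneous linear equation'': you must (a) choose $G(m)$ with $dG(m)=G(dm)$, and the solvability of this is not free --- it comes from evaluating the closedness equation of $\Theta$ on the previous stage at the closed element $dm$, which exhibits the difference between $G(dm)$ and a conjugate of $F_1(dm)$ (plus terms involving $\Theta_2$ and $F_2$) as a coboundary; and (b) extend \emph{every} higher component $\Theta_k$ on all composable tuples containing $m$, not just $\Theta_1(m)$. Moreover, if you allow $\Theta_0$ to be merely invertible in $H^0(D)$, the equation determining $G(m)$ requires inverting $\Theta_0$, and you are forced into the homotopy-inverse bookkeeping of Lemma \ref{lemmamax}; the induction runs cleanly only if you fix $G=F$ on objects and $\Theta_0=\id$ from the outset. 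With those repairs the cell-by-cell rectification is the standard strictification of an $A_\infty$ functor over a semi-free source and does go through, but as written the decisive step is asserted rather than proved --- which is exactly the work the paper's lifting-property argument is designed to bypass.
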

\begin{proof}
An $A_\infty$ functor $C\to D$ is a dg functor $\Cobar(\Bar(C))\to D$. There is a canonical projection $p\colon \Cobar(\Bar(C))\to C$ which is a weak equivalence. Consider the commutative diagram of dg functors
\begin{equation}
\xymatrix{
\Coh_\dg(C,D) \ar[rr]^i\ar[rd]_{t}&&   \Coh_{A_\infty}(C,D)\ar[dl]^s\\
&\Coh_\dg(\Cobar(\Bar(C)),D)
}
\end{equation}
The dg functor $s$ is an isomorphism on sets of objects, and a quasi-isomorphism on the corresponding complexes of maps by a standard argument (it holds for any small dg category $C$, not necessarily cofibrant).

The dg functor $t=p^*$ is defined on objects as the pre-composition with $p$, and is defined on morphisms as
\begin{equation}
t(\Psi)(f_1\otimes\dots\otimes f_n)=\Psi(pf_1\otimes pf_2\otimes \dots pf_n)
\end{equation}
for $\Psi\in\Coh_\dg(\Cobar(\Bar(C)),D)(tF,tG)$,
where $F,G\colon C\to D$ are dg functors, $tF(-)=F(p(-)), tG(-)=G(p(-))$.

For any $F,G$ as above, the maps $t\colon\Coh_\dg(C,D)(F,G)\to\Coh_\dg(\Cobar(\Bar(C)),D)(tF,tG)$ is a quasi-isomorphism of complexes, because $p$ is a weak equivalence (here we do not use that $C$ is cofibrant).

The point where the cofibrancy of $C$ is used is to show that $H^0(t)\colon H^0(\Coh_\dg(C,D))\to H^0(\Cobar(\Bar(C)),D))$ is equivalence of $\k$-linear categories.

Let us prove that $H^0(t)$ is an equivalence of $\k$-linear categories.

In the diagam 
\begin{equation}
\xymatrix{
\emptyset \ar[d]\ar[r]&\Cobar(\Bar(C))\ar[d]^{p}\\
C\ar@{.>}[ur]^{q}\ar[r]_{\id}&C
}
\end{equation}
the left-hand side vertical arrow is a cofibration, and the right-hand side vertical arrow is an acyclic fibration. Therefore a dashed arrow $q\colon C\to \Cobar(\Bar(C))$ exists. One has
\begin{equation}
p\circ q=\id_C
\end{equation}
In particular, $q$ is a weak equivalence. 

Consider the map $$t^\prime=q^*\colon \Coh_\dg(\Cobar(\Bar(C)),D)\to \Coh_\dg(C,D)$$
It defines a quasi-isomorphism on all complexes of maps, because $q$ is a weak equivalence. It is also surjective on objects, because $p\circ q=\id$. Therefore, it defines an equivalence on the level of $H^0(-)$. 
\end{proof}

\begin{coroll}\label{corollstep0}
The following statents are true:
\begin{itemize}
\item[(i)] Let $C,C^\prime,D$ be small dg categories, $C,C^\prime$ cofibrant, and let $w\colon C\to C^\prime$ be a weak equivalence. Then the dg functor
$$
w^*\colon \Coh_\dg(C^\prime,D)\to\Coh_\dg(C,D)
$$
is a weak equivalence,
\item[(ii)] let $C,D,D_1$ be small dg categories, $C$ cofibrant, and let $t\colon D\to D_1$ be a weak equivalence. Then the dg functor
$$
t_*\colon\Coh_\dg(C,D)\to\Coh_\dg(C,D_1)
$$
is a weak equivalence. 
\end{itemize}
\end{coroll}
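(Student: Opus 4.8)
The plan is to deduce both statements formally from the two facts already at our disposal: the comparison of Lemma \ref{lemmastep0}, asserting that the embedding $i\colon\Coh_\dg(C,D)\to\Coh_{A_\infty}(C,D)$ is a weak equivalence whenever the source dg category $C$ is cofibrant, and Proposition \ref{fquis}, which gives the desired functoriality statements for the $A_\infty$ version $\Coh_{A_\infty}$. The strategy in each case is to build a commutative square relating the $\dg$ and $A_\infty$ versions, observe that three of its four edges are weak equivalences, and conclude by the two-out-of-three property of weak equivalences in $\Cat_\dg(\k)$.

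For part (ii), I would first note that the embedding $i$ is natural with respect to push-forward along $t$ (both act on disjoint pieces of the data: $i$ is the identity on the $\Hom$-complexes $\Coh(F,G)$, while $t_*$ post-composes objects and morphisms with $t$), so that the square
$$
\xymatrix{
\Coh_\dg(C,D)\ar[r]^{t_*}\ar[d]_i & \Coh_\dg(C,D_1)\ar[d]^i\\
\Coh_{A_\infty}(C,D)\ar[r]_{t_*} & \Coh_{A_\infty}(C,D_1)
}
$$
commutes. Since $C$ is cofibrant, both vertical arrows are weak equivalences by Lemma \ref{lemmastep0}; since $t$ is a weak equivalence, the lower horizontal arrow is a weak equivalence by Proposition \ref{fquis}. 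Two-out-of-three then forces the upper arrow $t_*$ to be a weak equivalence, proving (ii).

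For part (i) the argument is identical with the analogous square built from pre-composition along $w$:
$$
\xymatrix{
\Coh_\dg(C',D)\ar[r]^{w^*}\ar[d]_i & \Coh_\dg(C,D)\ar[d]^i\\
\Coh_{A_\infty}(C',D)\ar[r]_{w^*} & \Coh_{A_\infty}(C,D)
}
$$
Here the lower horizontal arrow is a weak equivalence by Proposition \ref{fquis} since $w$ is, and both vertical embeddings are weak equivalences by Lemma \ref{lemmastep0}; this is precisely why the hypothesis requires \emph{both} $C$ and $C'$ to be cofibrant, as the left-hand vertical edge involves $\Coh_\dg(C',-)$. Two-out-of-three again yields the claim.

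The verifications that the two squares commute and that two-out-of-three applies are entirely routine. The only genuine content is imported wholesale: all the analytic work sits inside Proposition \ref{fquis} and Lemma \ref{lemmastep0}. Consequently there is no real obstacle to overcome here beyond correctly tracking the cofibrancy hypotheses needed to invoke Lemma \ref{lemmastep0} on each vertical edge.
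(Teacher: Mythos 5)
Your proposal is correct and is essentially identical to the paper's own proof: the paper forms the same commutative square comparing $\Coh_\dg$ with $\Coh_{A_\infty}$, invokes Lemma \ref{lemmastep0} for the horizontal (embedding) arrows and Proposition \ref{fquis} for the $A_\infty$ side, and concludes by two-out-of-three. The only cosmetic difference is that the paper writes out case (i) and declares (ii) similar, whereas you treat both explicitly.
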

\begin{proof}
We prove (i), the proof of (ii) is similar. 

In the commutative diagram
\begin{equation}
\xymatrix{
\Coh_\dg(C^\prime,D)\ar[d]_{w^*}\ar[r]^i&\Coh_{A_\infty}(C^\prime,D)\ar[d]^{w^*}\\
\Coh_\dg(C,D)\ar[r]_{i}&\Coh_{A_\infty}(C,D)
}
\end{equation}
the horisontal arrows are weak equivalences by Lemma \ref{lemmastep0}, and the right vertical arrow is a weak equivalence by Proposition \ref{fquis}. Therefore, the left vertical arrow also is.
\end{proof}

{\it Step 1.}

\begin{prop}\label{propstep1}
Let $C,D$ be small dg categories, $C$ cofibrant. Then 
$\Coh_\dg(C,\hat{D})$ is a path object of $\Coh_\dg(C,D)$.
\end{prop}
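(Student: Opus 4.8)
The statement matches the definition of a path object in two halves: that $s_*\colon \Coh_\dg(C,D)\to\Coh_\dg(C,\hat{D})$ is a weak equivalence, and that $(p_{0*},p_{1*})\colon\Coh_\dg(C,\hat{D})\to\Coh_\dg(C,D)\times\Coh_\dg(C,D)$ is a fibration. The first half is immediate for \emph{any} cofibrant $C$: since $s\colon D\to\hat{D}$ is a weak equivalence by Proposition \ref{proptabpath}, Corollary \ref{corollstep0}(ii) shows at once that $s_*$ is a weak equivalence. Thus the only content of the Proposition lying beyond Key-Lemma \ref{keylemma} is to upgrade the fibration condition from the case ``$C$ is an $I$-cell complex'' to the case ``$C$ is cofibrant''. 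The plan is to do this by a purely formal retract argument.

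First I would use that $C$ is cofibrant, so that $\varnothing\to C$ is a cofibration; by Proposition \ref{propretract} it is then a retract of a relative $I$-cell complex $\varnothing\to C'$. In other words, $C'$ is an $I$-cell complex and $C$ is a retract of $C'$ in $\Cat_\dg(\k)$: I fix data $\iota\colon C\to C'$ and $\rho\colon C'\to C$ with $\rho\circ\iota=\id_C$. Next I would apply the bifunctor $\Coh_\dg(-,-)$. Since $\Coh_\dg(-,Y)$ is contravariant in its first slot, $\iota,\rho$ induce maps $\rho^*\colon\Coh_\dg(C,Y)\to\Coh_\dg(C',Y)$ and $\iota^*\colon\Coh_\dg(C',Y)\to\Coh_\dg(C,Y)$ with $\iota^*\circ\rho^*=(\rho\circ\iota)^*=\id$, uniformly for $Y=D$ and $Y=\hat{D}$. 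Because $p_{0*},p_{1*}$ are induced by morphisms in the \emph{second} slot, they are natural with respect to $\rho^*$ and $\iota^*$ (the two variances of a strict bifunctor commute on the nose). This exhibits the square $(p_{0*},p_{1*})$ for $C$ as a retract, in the arrow category, of the same square for $C'$:
\[
\xymatrix{
\Coh_\dg(C,\hat{D})\ar[r]^{\rho^*}\ar[d]_{(p_{0*},p_{1*})}&\Coh_\dg(C',\hat{D})\ar[r]^{\iota^*}\ar[d]_{(p_{0*},p_{1*})}&\Coh_\dg(C,\hat{D})\ar[d]^{(p_{0*},p_{1*})}\\
\Coh_\dg(C,D)^{\times 2}\ar[r]_{\rho^*}&\Coh_\dg(C',D)^{\times 2}\ar[r]_{\iota^*}&\Coh_\dg(C,D)^{\times 2}
}
\]
whose horizontal composites are the identities.

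Finally, Key-Lemma \ref{keylemma}, applied to the $I$-cell complex $C'$, says the middle vertical map is a fibration; since fibrations in a closed model category are stable under retracts, the left vertical map is a fibration as well. Combined with the weak-equivalence half above, this proves that $\Coh_\dg(C,\hat{D})$ is a path object of $\Coh_\dg(C,D)$. I expect the main obstacle of the whole argument to be contained not in this Proposition but in Key-Lemma \ref{keylemma} itself, that is, in verifying the lifting property (F2) for an $I$-cell complex; granting that, the reduction to an arbitrary cofibrant $C$ is the formal retract-stability argument above. The only points requiring care are that the retraction data $(\iota,\rho)$ are honest morphisms of $\Cat_\dg(\k)$ and that the two variances of $\Coh_\dg$ commute strictly, both of which hold because $\Coh_\dg(-,-)$ is a strict bifunctor valued in $\Cat_\dg(\k)$.
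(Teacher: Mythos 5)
Your proposal is correct and coincides with the paper's own proof: the weak-equivalence half via Corollary \ref{corollstep0}(ii), and the fibration half by writing $C$ as a retract of an $I$-cell complex (Proposition \ref{propretract}), exhibiting $(p_{0*},p_{1*})$ as a retract of the corresponding map for the $I$-cell complex, and invoking Key-Lemma \ref{keylemma} together with the stability of fibrations under retracts.
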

\begin{proof}
We need to prove that in the diagram
\begin{equation}
\xymatrix{
&\Coh_\dg(C,\hat{D})\ar[d]^{(p_0,p_1)}\\
\Coh_\dg(C,D)\ar[ur]^{s_*}\ar[r]_{\Delta\hspace{12mm}}&\Coh_\dg(C,D)\times\Coh_\dg(C,D)
}
\end{equation}
the dg functor $s^*$ is a weak equivalence, and the dg functor $(p_0,p_1)$ is a fibration (here $s\colon D\to \hat{D}$ is the dg functor constructed in Section \ref{sectionpathtab}.

The dg functor $s$ is a weak equivalence, by Proposition \ref{proptabpath}. Therefore, $s^*$ is a weak equivalence, by Corollary \ref{corollstep0}(ii).

The proof that $(p_0,p_1)$ is a fibration is more involved; we essentially use Key-Lemma \ref{keylemma}. 

By Proposition \ref{propretract}, any cofibrant dg category $C$ is a retract of an $I$-cell complex (cofibrant) dg category $C_\sm$. It means that there are dg functors $C\xrightarrow{i}C_\sm\xrightarrow{\rho}C$ where $\rho\circ i=\id_C$ (in fact, $i$ and $\rho$ are weak equivalences). It gives the diagram
\begin{equation}
\xymatrix{
\Coh_\dg(C,\hat{D})\ar[r]^{\rho^*}\ar[d]_{(p_0,p_1)}&\Coh_\dg(C_\sm,\hat{D})\ar[r]^{i^*}\ar[d]_{(p_0^\prime,p_1^\prime)}&\Coh_\dg(C,\hat{D})\ar[d]^{(p_0,p_1)}\\
C\times C\ar[r]_{i\times i}&C_\sm\times C_\sm\ar[r]_{\rho\times \rho}&C\times C
}
\end{equation}
By Key-Lemma \ref{keylemma} we know that the middle vertical arrow $(p_0^\prime,p_1^\prime)$ is a fibration. On the other hand, the compositions of horizontal arrows are indentity maps. That is, the map $(p_0,p_1)$ is a retract of $(p_0^\prime,p_1^\prime)$.

It follows that $(p_0,p_1)$ is a fibration, by axiom M3) of a closed model category, see e.g. [Hir, Ch. 7.1].

\end{proof}

{\it Step 2.} 

Here we complete the proof of Theorem \ref{theorht}.

\begin{lemma}\label{lemmastep2}
Let small dg categories $C,D$ be cofibrant.
Then the dg category $C\sotimes D$ is cofibrant. 
\end{lemma}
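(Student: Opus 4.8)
The plan is to verify cofibrancy of $C\sotimes D$ directly, by checking that $\varnothing\to C\sotimes D$ has the left lifting property against every acyclic fibration, and to reduce this — via the adjunction of Theorem \ref{prop1} — to the corresponding lifting property for $C$. The key tool will be the characterisation of acyclic fibrations in $\Cat_\dg(\k)$ furnished by the generating cofibrations $I=\{\alpha, s(n)\}$ of Section \ref{sectioncmcdg}: a dg functor $p\colon E\to E'$ is an acyclic fibration if and only if it is surjective on objects (right lifting against $\alpha$) and the maps $E(x,y)\to E'(px,py)$ are surjective quasi-isomorphisms for all $x,y$ (right lifting against the $s(n)$). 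Using this characterisation lets me sidestep the isomorphism-lifting axiom (F2) entirely.

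So I would start from an acyclic fibration $p\colon E\to E'$ and an arbitrary dg functor $\Psi\colon C\sotimes D\to E'$, and seek a lift $C\sotimes D\to E$. By Theorem \ref{prop1}, $\Psi$ corresponds to $\bar\Psi=\Phi(\Psi)\colon C\to\Coh_\dg(D,E')$, and, by the naturality of $\Phi$ in its last argument, lifting $\Psi$ along $p$ is the same as lifting $\bar\Psi$ along the induced functor $p_*\colon\Coh_\dg(D,E)\to\Coh_\dg(D,E')$. Hence everything reduces to showing that $p_*$ is again an acyclic fibration: once this is known, the cofibration $\varnothing\to C$ admits a lift $\tilde\Psi\colon C\to\Coh_\dg(D,E)$ with $p_*\tilde\Psi=\bar\Psi$, and $\Phi^{-1}(\tilde\Psi)$ is then the desired lift of $\Psi$.

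I would verify the three defining properties of $p_*$ as follows. Surjectivity of $p_*$ on objects is exactly the assertion that every dg functor $D\to E'$ lifts along $p$ to a dg functor $D\to E$; since $D$ is cofibrant and $p$ is an acyclic fibration, such a lift exists by the left lifting property of $\varnothing\to D$ — this is the \emph{only} place where cofibrancy of $D$ is used. Surjectivity of $p_*$ on $\Hom$-complexes follows from the componentwise surjectivity of $p$, because each constituent of $\Coh(F,G)$ is a product of spaces $\underline{\Hom}_\k\big(D(X_{n-1},X_n)\otimes\dots\otimes D(X_0,X_1),\,E(FX_0,GX_n)\big)$, and over a field the functor $\underline{\Hom}_\k(V,-)$ is exact, hence preserves surjections, as do products. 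Finally, $p_*$ is a quasi-isomorphism on each $\Hom$-complex because it is a weak equivalence: this is Corollary \ref{corollstep0}(ii), applied with the cofibrant category $D$ in the first slot and the weak equivalence $p$ in the second. The last two points together say that $p_*$ is a surjective quasi-isomorphism on $\Hom$-complexes, so with surjectivity on objects it is an acyclic fibration, completing the reduction.

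The substance of the argument is the reduction itself: recognising that cofibrancy of $C\sotimes D$ is best extracted from the adjunction rather than from a direct cell decomposition (which would be awkward because of the relations $(R_1)$–$(R_4)$). Once that is done, the two hypotheses separate cleanly — cofibrancy of $D$ gives surjectivity of $p_*$ on objects, while cofibrancy of $C$ supplies the final lift. \textbf{The main obstacle}, and the point to handle with care, is exactly the reliance on the generating-cofibration description of acyclic fibrations: it is what replaces the delicate axiom (F2), the very condition that makes Key-Lemma \ref{keylemma} hard, by the two elementary surjectivity conditions used above.
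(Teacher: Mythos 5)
Your proof is correct, but it takes a genuinely different route from the paper's. The paper argues by exhibiting an explicit cell structure: for $C,D$ $I$-cell complexes with free generating morphisms $\{f_i\}$, $\{g_j\}$, it observes that $C\sotimes D$ is itself an $I$-cell complex on the generators $f_i\otimes\id_y$, $\id_x\otimes g_j$ and $\varepsilon(f_i;s_1,\dots,s_n)$ (the relations $(R_1)$--$(R_4)$ precisely reduce everything to these), and then handles general cofibrant $C,D$ by the retract argument of Proposition \ref{propretract}. You instead verify the left lifting property of $\varnothing\to C\sotimes D$ against acyclic fibrations directly, transporting the lifting problem through the adjunction of Theorem \ref{prop1} and showing that $\Coh_\dg(D,-)$ preserves acyclic fibrations; your three checks on $p_*$ are all sound (object-surjectivity is exactly the lifting property of $\varnothing\to D$; surjectivity on $\Hom$-complexes follows from exactness of $\underline{\Hom}_\k(V,-)$ and of products over a field; the quasi-isomorphism part is Corollary \ref{corollstep0}(ii), which is proved in Step 0 independently of this lemma, so there is no circularity), and the identification of acyclic fibrations with the $I$-injectives is part of the cofibrantly generated package. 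What each approach buys: yours cleanly separates the roles of the two cofibrancy hypotheses and sidesteps the bookkeeping needed to see that the $\varepsilon$-generators and the relations $(R_1)$--$(R_4)$ assemble into a filtration by pushouts along the $s(n)$; but it leans on the $A_\infty$ input behind Corollary \ref{corollstep0} (Lemma \ref{lemmastep0} and Proposition \ref{fquis}), whereas the paper's argument is elementary and yields strictly more, namely an explicit $I$-cell structure on $C\sotimes D$ when $C$ and $D$ are $I$-cell complexes --- information that a pure lifting argument cannot produce and that is the sort of thing reused elsewhere (e.g.\ in the retract step and in Key-Lemma \ref{keylemma}, which require genuine $I$-cell complexes rather than bare cofibrancy).
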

\begin{proof}
Let firstly $C$ and $D$ be $I$-cell complexes. Denote by $\{f_i\}_{i\in I}$ and $\{g_j\}_{j\in J}$ morphisms in $C$ and $D$, correspondingly, which are their free generators. Then $C\sotimes D$ is an $I$-cell complex, with the set of free generating morphisms $$\{f_i\otimes\id_y\}_{i\in I, y\in \Ob(D)}\sqcup \{\id_x\otimes g_j\}_{j\in J, x\in \Ob(C)}\sqcup \{\varepsilon(f_i; s_1,\dots,s_n)\}_{i\in J, n\ge 1, s_1,\dots,s_n\in\Mor(D)}$$
In particular, $C\sotimes D$ is cofibrant, if $C$ and $D$ are $I$-cell complexes.

By Proposition \ref{propretract}, any cofibrant dg category is a retract of an $I$-cell complex dg category. 
Let $C,D$ be cofibrant, and $C_\sm, D_\sm$ be $I$-cell complex dg categories whose retracts $C$ and $D$ are. 
Then $C\sotimes D$ is a retract of $C_\sm\sotimes D_\sm$ which is cofibrant. Then $C\sotimes D$ itself is cofibrant, by axiom M3) of a closed model category, see e.g. [Hir, Ch. 7.1].
\end{proof}

Consider the adjunctions:
\begin{equation}\label{eqstep21}
\Fun_\dg(C\sotimes D,E)=\Fun_\dg(C,\Coh_\dg(D,E))
\end{equation}
\begin{equation}\label{eqstep22}
\Fun_\dg(C\sotimes D,\hat{E})=\Fun_\dg(C,\Coh_\dg(D,\hat{E}))
\end{equation}
which follow from \eqref{adj1}.

\begin{prop}\label{propstep2}
Let $C,D,E$ be small dg categories, with $C,D$ cofibrant. One has:
\begin{equation}\label{eqend1}
\Hot(C\sotimes D,E)=\Hot(C,\Coh_\dg(D,E))
\end{equation}
\end{prop}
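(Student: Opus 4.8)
The plan is to compute both sides through the right homotopy relation and then to show that the adjunction isomorphism $\Phi$ of Theorem \ref{prop1} descends to the quotients. First I would record the ingredients that make this possible: every object of $\Cat_\dg(\k)$ is fibrant (Section \ref{sectioncmcdg}), $C$ is cofibrant by hypothesis, and $C\sotimes D$ is cofibrant by Lemma \ref{lemmastep2}. By Proposition \ref{proplr}(ii), $\sim_R$ is then an equivalence relation on each of $\Fun_\dg(C\sotimes D,E)$ and $\Fun_\dg(C,\Coh_\dg(D,E))$, independent of the chosen path object, and the standard computation of morphisms between a cofibrant source and a fibrant target gives
\begin{equation}
\Hot(C\sotimes D,E)=\Fun_\dg(C\sotimes D,E)/\!\sim_R,\qquad \Hot(C,\Coh_\dg(D,E))=\Fun_\dg(C,\Coh_\dg(D,E))/\!\sim_R
\end{equation}

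Next I would fix convenient path objects to make $\sim_R$ explicit: for $E$ the Tabuada path object $\hat E$, and for $\Coh_\dg(D,E)$ the dg category $\Coh_\dg(D,\hat E)$, which is a path object precisely by Proposition \ref{propstep1} (using that $D$ is cofibrant). With these choices, $F\sim_R F'$ for $F,F'\in\Fun_\dg(C\sotimes D,E)$ means there is a dg functor $\theta\colon C\sotimes D\to\hat E$ with $p_0\circ\theta=F$ and $p_1\circ\theta=F'$, while $G\sim_R G'$ for $G,G'\in\Fun_\dg(C,\Coh_\dg(D,E))$ means there is a dg functor $\vartheta\colon C\to\Coh_\dg(D,\hat E)$ with $p_{0*}\circ\vartheta=G$ and $p_{1*}\circ\vartheta=G'$.

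The core step is to check that $\Phi$ intertwines these two relations. Applying the adjunction of Theorem \ref{prop1} at level $\hat E$ gives a bijection $\Fun_\dg(C\sotimes D,\hat E)=\Fun_\dg(C,\Coh_\dg(D,\hat E))$, and the $3$-functoriality of $\Phi$ in its third argument yields $\Phi(p_i\circ\theta)=p_{i*}\circ\Phi(\theta)$ for $i=0,1$, where $p_{i*}\colon\Coh_\dg(D,\hat E)\to\Coh_\dg(D,E)$ are the induced functors. Hence, if $\theta$ witnesses $F\sim_R F'$, then $\vartheta:=\Phi(\theta)$ satisfies $p_{0*}\vartheta=\Phi(F)$ and $p_{1*}\vartheta=\Phi(F')$, so $\vartheta$ witnesses $\Phi(F)\sim_R\Phi(F')$; and conversely, since $\Phi$ is a bijection, any $\vartheta$ witnessing $\Phi(F)\sim_R\Phi(F')$ produces $\theta:=\Phi^{-1}(\vartheta)$ witnessing $F\sim_R F'$. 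Thus $F\sim_R F'$ if and only if $\Phi(F)\sim_R\Phi(F')$, and $\Phi$ descends to a bijection of quotient sets, which is exactly the claimed identity \eqref{eqend1}.

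I expect the genuine difficulty to reside entirely in the inputs already secured, namely Proposition \ref{propstep1} (equivalently Key-Lemma \ref{keylemma}), which supplies the path object $\Coh_\dg(D,\hat E)$, and Lemma \ref{lemmastep2}, which supplies cofibrancy of $C\sotimes D$. Granting these, the present argument is formal. The only points requiring care are to invoke the naturality of $\Phi$ with respect to $p_0,p_1$ correctly, and to exploit that $\sim_R$ is independent of the choice of path object, which is what legitimizes computing it on both sides through the matched pair $(\hat E,\Coh_\dg(D,\hat E))$ linked by the adjunction.
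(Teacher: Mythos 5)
Your proof is correct and follows essentially the same route as the paper: the paper's own argument likewise combines Lemma \ref{lemmastep2}, Proposition \ref{propstep1}, the adjunctions \eqref{eqstep21}--\eqref{eqstep22}, and the computation of $\Hot(-,-)$ via $\sim_R$, merely leaving the details you spell out as ``easily follows''. Your explicit use of the $3$-functoriality of $\Phi$ to intertwine the two homotopy relations is precisely the intended content of that remark.
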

\begin{proof}
The dg category $\Coh_\dg(D,\hat{E})$ is a path object of $\Coh_\dg(D,E)$ by Proposition \ref{propstep1}, because $D$ is  cofibrant.  
The dg category $C\sotimes D$ is cofibrant by Lemma \ref{lemmastep2}, because $C$ and $D$ are. Then the statement easily follows from \eqref{eqstep21}, \eqref{eqstep22}, and from the description of morphisms in the homotopy category via the right homotopy relation $\sim_R$, see Section \ref{sectionlrhot}.
\end{proof}

It is proven in [Fa2] that $\Coh_{A_\infty}(D,E)$ is isomorphic in $\Hot$ to the To\"{e}n category $\RRHom(C,D)$ of quasi-functors [To], for which one has
\begin{equation}
\Hot(C\otimes D,E)=\Hot(C,\RRHom(D,E))
\end{equation}
by [To, Corr. 6.4].

For $D$ cofibrant, the dg categories $\Coh_\dg(D,E)$ and $\Coh_{A_\infty}(D,E)$ are isomorphic as objects of $\Hot$, by Lemma \ref{lemmastep0}.

It follows from the Yoneda lemma that $C\sotimes D$ is isomorphic to $C\otimes D$ in $\Hot$.

\qed

\subsection{\sc A proof of Key-Lemma \ref{keylemma}}\label{sectionkl}
\subsubsection{\sc}\label{proofkl1}
Let us write down explicitely the objects and the morphisms of the dg category $\Coh_\dg(C,\hat{D})$. 

An object of $\Coh_\dg(C,\hat{D})$ is a dg functor $\Phi\colon C\to\hat{D}$. It is given by two dg functors $F,G\colon C\to D$, collection of closed degree 0 maps $\{\theta(X)\colon F(X)\to G(X)\}_{X\in C}$ which are isomorphisms in $H^0(D)$, and a collection of maps $\{h(f)\in \Hom^{-1}_D(F(X),G(Y))\}_{f\in C(X,Y), X,Y\in C}$, such that
\begin{equation}\label{kl1}
d(h(f))-(-1)^{|f|}h(d(f))=G(f)\circ \theta(X)-\theta(Y)\circ F(f)
\end{equation}
and
\begin{equation}\label{kl2}
h(f_2\circ f_1)=h(f_2)\circ F(f_1)+G(f_2)\circ h(f_1)
\end{equation}
\begin{equation}\label{kl3}
\xymatrix{
F(X)\ar[r]^{F(f)}\ar[d]_{\theta(X)}\ar[rd]^{h(f)}&F(Y)\ar[d]^{\theta(Y)}\\
G(X)\ar[r]_{G(f)}&G(Y)
}
\end{equation}
Note that \eqref{kl1} and \eqref{kl2} together just mean that $\Theta=(\theta, h,0,0,\dots)$ is a coherent natural transformation $\Theta\colon F\Rightarrow G$ of very special type: its components of degrees 2,3,... vanish. One also has that its 0-component $\{\theta(X)\colon F(X)\to G(X)\}_{X\in C}$ has degree 0 and is invertible in $H^0(D)$. 

Let $\Phi=(F,G,\Theta)$ and $\Phi_1=(F_1,G_1,\Theta_1)$ be two objects of $\Coh_\dg(C,\hat{D})$.
A morphism $\Psi_\ldot\colon\Phi\to\Phi_1$ in $\Coh_\dg(C,\hat{D})$ is given by three coherent natural transformations
$$
\Psi_1\colon F\Rightarrow F_1,\ \Psi_2\colon G\Rightarrow G_1,\Psi_3\colon F\Rightarrow G_1
$$
The boundary $d\Psi_\ldot$ is defined as
\begin{equation}\label{klsuper}
d(\Psi_1,\Psi_2,\Psi_3)=\big(d\Psi_1,\  d\Psi_2,\   d\Psi_3-\Psi_2\cup \Theta+\Theta_1\cup \Psi_1\big)
\end{equation}
Here $\Theta=(\theta,h,0,\dots)\colon F\Rightarrow G$ and $\Theta_1=(\theta_1,h_1,0,\dots)\colon F_1\Rightarrow G_1$ are the natural transformations introduced above. The sign $\cup$ denotes the ``vertical'' product in $\Coh_\dg(C,D)$.

One can compare this dg category $\Coh_\dg(C,\hat{D})$ with the path-object dg category $\widehat{\Coh_\dg(C,D)}$, we'll see these two dg categories are rather similar.

The dg category $\widehat{\Coh_\dg(C,D)}$ is a particular case of the Tabuada path-object category $\hat{X}$ of a small dg category $X$, see Section \ref{sectionpathtab}.
It has the following description. 

An object of $\widehat{\Coh_\dg(C,D)}$ is a triple $(F,G,\Xi)$ where $F,G\colon C\to D$ are dg functors, and $\Xi\colon F\Rightarrow G$ is a closed degree 0 coherent natural transformation, which defined an invertible morphism in $H^0(\Coh_\dg(C,D))$. One has:
\begin{lemma}\label{h0coh}
A degree 0 closed coherent natural transformation $\Xi\colon F\Rightarrow G\colon C\to D$ defines an invertible morphism in $H^0(\Coh_\dg(C,D))$ if an only if the closed degree 0 morphisms $$\{\Xi(X)\colon D(F(X),G(X))\}_{X\in\Ob(C)}$$ are all invertible in $H^0(D)$
\end{lemma}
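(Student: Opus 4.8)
The plan is to prove the two implications separately, treating the forward direction as routine and concentrating the effort on the converse.

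\emph{The easy direction.} For each $X\in\Ob(C)$ I would first record that there is an \emph{evaluation} dg functor $\ev_X\colon\Coh_\dg(C,D)\to D$, sending a dg functor $F$ to $F(X)$ and a coherent natural transformation $\Psi=(\Psi_0,\Psi_1,\dots)$ to its zeroth component at $X$, namely $\Psi_0(X)\colon F(X)\to G(X)$. This is genuinely a dg functor: because the Hochschild differential on $\Coh$ strictly raises the word-length $n$, the zeroth component of $d\Psi$ is just the internal differential $d_D\big(\Psi_0(X)\big)$, and the zeroth component of a vertical composite is the composite of zeroth components. Hence $H^0(\ev_X)$ is a $\k$-linear functor, and functors preserve isomorphisms; so if $\Xi$ is invertible in $H^0(\Coh_\dg(C,D))$, then each $\Xi(X)=\ev_X(\Xi)$ is invertible in $H^0(D)$.

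\emph{Reduction for the converse.} Assume now that every $\Xi(X)$ is invertible in $H^0(D)$. I would reduce the invertibility of $[\Xi]$ to a one-sided Yoneda criterion: it suffices to prove that post-composition $\Xi\cup(-)\colon\Coh(H,F)\to\Coh(H,G)$ is a quasi-isomorphism of complexes for every dg functor $H\colon C\to D$. Indeed, taking $H=G$ produces a closed degree $0$ transformation $\Xi'$ with $[\Xi\cup\Xi']=[\id_G]$, and taking $H=F$ together with injectivity of $\Xi\cup(-)$ on $H^0$ upgrades $\Xi'$ to a two-sided inverse, so that $[\Xi]$ is invertible in $H^0(\Coh_\dg(C,D))$.

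\emph{The filtration argument.} To establish that $\Xi\cup(-)$ is a quasi-isomorphism I would filter both $\Coh(H,F)$ and $\Coh(H,G)$ by word-length, setting $\Coh_{\ge p}:=\prod_{n\ge p}\coh_n(H,-)$. The map $\Xi\cup(-)$ is filtered, and on the associated graded $\gr^p=\coh_p$ it reduces to post-composing the output by the zeroth component $\Xi_0$ at the last object. Since $\Xi_0(X)$ is invertible in $H^0(D)$, it is a homotopy equivalence in $D$ (this is exactly the content of Lemma \ref{lemmamax}), hence post-composition with it is a homotopy equivalence of the relevant $\Hom$-complexes; applying $\underline{\Hom}_\k(V,-)$, which preserves homotopy equivalences, and taking products, $\gr^p\big(\Xi\cup(-)\big)$ is a quasi-isomorphism for every $p$.

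\emph{Passage to the limit — the main obstacle.} It remains to lift the statement from the associated graded to the whole complex, and this is the step I expect to require the most care. The filtration is complete, with $\Coh=\varprojlim_p\Coh/\Coh_{\ge p}$ and surjective transition maps; each truncation $\Coh/\Coh_{\ge p}=\prod_{0\le n<p}\coh_n$ carries a finite filtration, so by induction on $p$ and the five lemma the induced map on every truncation is a quasi-isomorphism. To conclude on $\varprojlim$ I would invoke the Milnor sequences relating $H^k(\Coh)$ to $\varprojlim_p H^k(\Coh/\Coh_{\ge p})$ and to $\varprojlim^{1}_p H^{k-1}(\Coh/\Coh_{\ge p})$ for both towers; since $\Xi\cup(-)$ is a levelwise isomorphism on these cohomology towers, it induces isomorphisms on both the $\varprojlim$ and the $\varprojlim^{1}$ terms, and the five lemma forces an isomorphism on $H^k(\Coh)$. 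This shows $\Xi\cup(-)$ is a quasi-isomorphism for all $H$, completing the converse. The delicacy lies entirely in controlling the inverse limit (the $\varprojlim^{1}$ term), which is why the completeness of the product filtration is essential.
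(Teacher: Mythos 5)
The paper states this lemma with no proof at all (the \qed follows the statement immediately), so there is no argument of the author's to compare yours against; what you have done is supply the omitted details, and your proof is correct. Both halves are sound: the evaluation dg functors $\ev_X\colon\Coh_\dg(C,D)\to D$ do exist because the Hochschild part of the differential strictly raises word-length (so the $\coh_0$-component of $d\Psi$ is the internal differential of $\Psi_0$) and the $\coh_0$-component of a vertical composite is the composite of the $\coh_0$-components; and for the converse, the reduction to the statement that $\Xi\cup(-)\colon\Coh(H,F)\to\Coh(H,G)$ is a quasi-isomorphism for all $H$, followed by the word-length filtration, is the standard way to exploit the hypothesis, since on $\gr^p$ the map is post-composition by $\Xi_0$ at the last object, which is a homotopy equivalence of $\Hom$-complexes. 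You are right that the only delicate point is the passage from the associated graded to the completed total complex, and your treatment via the surjective tower $\Coh/\Coh_{\ge p}$ and the Milnor ${\varprojlim}^1$-sequence handles it correctly (exactness of products over a field makes the $\gr$-step unproblematic as well). The one alternative worth mentioning, more in the spirit of the paper's explicit computations elsewhere (e.g.\ Lemma \ref{lemmaf2}), is to build a two-sided inverse $\Xi'$ directly by induction on word-length, solving for $\Xi'_n$ at each stage with the obstruction killed by invertibility of the $\Xi_0(X)$; that avoids the ${\varprojlim}^1$ discussion but is essentially the same filtration argument in disguise.
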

\qed

A morphism $\Psi_\ldot\colon (F,G,\Xi)\to (F_1,G_1,\Xi_1)$ is given by a triple of natural transformations
$$
\Psi_1\colon F\Rightarrow F_1,\ \Psi_2\colon G\Rightarrow G_1,\Psi_3\colon F\Rightarrow G_1
$$
The differential $d\Psi_\ldot$ is given as
\begin{equation}\label{klsuperbis}
d(\Psi_1,\Psi_2,\Psi_3)=\big(d\Psi_1,\  d\Psi_2,\   d\Psi_3-\Psi_2\cup \Xi+\Xi_1\cup \Psi_1\big)
\end{equation}

Let us compare the two dg categories $\Coh_\dg(C,\hat{D})$ and $\widehat{\Coh_\dg(C,D)}$. It follows from Lemma \ref{h0coh} that for the coherent natural transformation $\Xi$ of type $\Theta$ (that is, with vanishing components in degrees $\ge 2$), the conditions ``are invertible in $H^0(-)$'' agree. The only difference is that, for the case of dg category $\Coh_\dg(C,\hat{D})$, the coherent natural tansformations $\Theta\colon F\Rightarrow G$ which figure out in the definition of {\it objects}, are those with vanishing components in degrees $\ge 2$. At the same time, those coherent natural transformations which figure out in {\it morphisms} (that is, $\Psi_1,\Psi_2,\Psi_3$), are totally the same for both dg categories.

\subsubsection{\sc}\label{proofkl2}
For the dg category $\widehat{\Coh_\dg(C,D)}$ we know that it is a path object of the dg category $\Coh_\dg(C,D)$, as a particular case of Proposition \ref{proptabpath}. 

For any two small dg categories $C,D$, the dg category $\Coh_\dg(C,D)$ contains a dg sub-category $\Coh_\dg^\lin(C,D)$ defined as follows. 

The objects of $\Coh_\dg^\lin(C,D)$ are all dg functors $F\colon C\to D$, that is, the same that the objects of $\Coh_\dg(C,D)$.

For two dg functors $F,G\colon C\to D$, the complex $\Coh_\dg^\lin(C,D)(F,G)$ is formed by the coherent natural transformations having only non-zero components in degrees 0 and 1. That is, it is formed by the coherent natural transformations $\Theta$, where 
$$
\begin{aligned}
\ &\Theta_0\in\prod_{X\in C}\Hom_D(F(X),G(X))\\
&\Theta_1\in \prod_{X,Y\in C}\Hom_\k\Big(\Hom_C(X,Y),\Hom_D(F(X),G(Y))\Big)\\
&\Theta_2,\Theta_3,\dots=0
\end{aligned}
$$
and such that for any morphism $X\xrightarrow{f}Y\xrightarrow{g}Z$ in $C$ one has:
\begin{equation}\label{eqder}
\Theta_1(g)F(f)-\Theta_1(gf)+G(g)\Theta_1(f)=0
\end{equation}
It is clear that one gets a subcomplex $\Coh_\dg^\lin(C,D)(F,G)\subset \Coh_\dg(C,D)(F,G)$.

\begin{lemma}\label{lemmaproofkl1}
Let $C,D$ be small dg categories, with $C$ an $I$-cell complex. Then, for any two dg functors $F,G\colon C\to D$, the imbedding of complexes 
$$
\Coh_\dg^\lin(C,D)(F,G)\hookrightarrow\Coh_\dg(C,D)(F,G)
$$
is a quasi-isomorphism. Consequently, the corresponding dg functor 
$$
i\colon \Coh_\dg^\lin(C,D)\to\Coh_\dg(C,D)
$$
is a quasi-equivalence.
\end{lemma}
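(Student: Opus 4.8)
The plan is to recognize both complexes as computing the Hochschild cohomology of $C$ and to realize the inclusion $i$ as the map induced by a comparison of two semi-free bimodule resolutions; the whole point is that $I$-cell complexes are \emph{smooth}, i.e.\ have Hochschild dimension $\le 1$. First I would reformulate the objects. As recalled just after the definition of $\Coh$, the complex $\Coh_\dg(C,D)(F,G)$ is the Hochschild cochain complex $\Hom_{C^e}(\Bar(C),M)$ of $C$ with values in the $C$-bimodule $M=\Hom_D(F(-),G(-))$ (left action through $G$, right action through $F$), where $\Bar(C)$ is the normalized bar resolution of $C$ as a $C$-bimodule. Writing $S=\prod_X \k\cdot\id_X$ for the subcategory of objects and $V$ for the graded $S$-bimodule spanned by the generating morphisms of the $I$-cell complex $C$, I would then identify $\Coh_\dg^\lin(C,D)(F,G)$ with $\Hom_{C^e}(K,M)$, where $K$ is the two-term complex $C\otimes_S V\otimes_S C\to C\otimes_S C$. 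Indeed a component $\Theta_1$ satisfying the derivation identity \eqref{eqder} is exactly an element of $\Hom_S(V,M)=\Hom_{C^e}(C\otimes_S V\otimes_S C,M)$, since a derivation on the free graded category $C$ is freely determined by its restriction to the generators $V$; meanwhile $\Theta_0\in\Hom_{C^e}(C\otimes_S C,M)$, and a direct check shows that the internal plus $d_{\mathrm{bar}}$ differentials match the differential of $\Hom_{C^e}(K,M)$.

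The heart of the argument is that, precisely because $C$ is an $I$-cell complex, $K$ is a \emph{semi-free resolution} of the diagonal bimodule $C$, and the canonical comparison map $\beta\colon\Bar(C)\to K$ covering $\id_C$ (the identity in degree $0$, the projection $\bar C\to V$ in degree $1$, and $0$ in higher degrees) is a quasi-isomorphism. On underlying graded categories $C$ is the free $S$-category on $V$, and there $K^\gr\to C^\gr$ is the classical length-one resolution of a tensor algebra, which is exact. I would upgrade this to the dg level by filtering both $K$ and $\Bar(C)$ by the ascending $I$-cell filtration $Y_0\subset Y_1\subset\cdots$ of $C$ and running the transfinite induction: the internal differential of $C$ is ``lower order'' for this filtration, so on the associated graded the claim reduces to the free graded case. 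This exhibits $K$ as semi-free and $\beta$ as a quasi-isomorphism between semi-free bimodule resolutions, hence as a homotopy equivalence.

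Applying the contravariant functor $\Hom_{C^e}(-,M)$ to $\beta$ then yields that $\beta^*=i\colon \Coh_\dg^\lin(C,D)(F,G)\hookrightarrow\Coh_\dg(C,D)(F,G)$ is itself a homotopy equivalence of complexes, in particular a quasi-isomorphism. I would stress that using an honest homotopy equivalence of resolutions, rather than a spectral sequence, is what makes the argument valid for the \emph{product}-total complex $\Tot_\Pi$, where the naive filtration spectral sequence need not converge. Finally, since $i\colon\Coh_\dg^\lin(C,D)\to\Coh_\dg(C,D)$ is the identity on objects and a quasi-isomorphism on every $\Hom$-complex, it is automatically a quasi-equivalence: condition (a) is the previous sentence, and condition (b) holds because $H^0$ is unchanged on objects and an isomorphism on all $\Hom$'s.

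The main obstacle is the dg upgrade in the second step. The classical Koszul/smoothness statement is about the \emph{graded} free category, and incorporating the differential $d_C$ of the semi-free category (together with $d_D$ sitting inside $M$) is exactly what forces the filtration by the cell structure and the attendant transfinite induction; this is the one place where the hypothesis that $C$ is an $I$-cell complex is genuinely used and cannot be weakened, as a general $C$ has Hochschild dimension $>1$. A more hands-on alternative, in the spirit of the explicit computations elsewhere in the paper, is to write down directly a contracting homotopy on the cokernel $\Coh_\dg(C,D)(F,G)/\Coh_\dg^\lin(C,D)(F,G)$ by splitting off the leftmost generator from each reduced bar word; this avoids naming resolutions but reproduces the same combinatorics and the same need to control the interaction with $d_C$.
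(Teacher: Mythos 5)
Your proposal is correct, and it follows the same overall strategy as the paper: identify $\Coh_\dg(C,D)(F,G)$ with the Hochschild cochain complex of $C$ with coefficients in the bimodule $M=\Hom_D(F(-),G(-))$, use the classical fact that for the free graded category the linear subcomplex already computes this cohomology, and propagate to nonzero differentials via the $I$-cell filtration. Where you genuinely diverge is in the mechanism for the last step. The paper stays on the cochain side and invokes ``an elementary argument with spectral sequences''; you move to the resolution side, exhibit the two-term Koszul-type complex $K=\bigl(C\otimes_S V\otimes_S C\to C\otimes_S C\bigr)$ as an h-projective resolution of the diagonal bimodule, and deduce that the comparison map $\beta\colon\Bar(C)\to K$ is a homotopy equivalence, so that $i=\Hom_{C^e}(\beta,M)$ is a homotopy equivalence with no further discussion. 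This buys a real simplification: for the product-total complex $\Tot_\Pi$ the convergence of the column-filtration spectral sequence is not automatic (it can be rescued here because the $E_1$-page is concentrated in two columns, but that point has to be made), whereas dualizing a homotopy equivalence of resolutions requires no convergence argument. Two small repairs to your write-up: first, the degree-one component of $\beta$ cannot be the naive projection $\bar C\to V$ (with that choice $\beta$ fails to be a chain map already on words of length two); it must be the universal derivation $v_1\cdots v_n\mapsto\sum_i v_1\cdots v_{i-1}\otimes v_i\otimes v_{i+1}\cdots v_n$, whose dual carries $\theta_1\in\Hom_{S^e}(V,M)$ to the derivation it generates --- which is precisely the identification of the image with $\Coh_\dg^\lin(C,D)(F,G)$ that you correctly describe in the preceding sentence. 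Second, $\Bar(C)$ is not literally semi-free, but over a field it is h-projective (the bar-length filtration is exhaustive with degreewise split, h-projective quotients), and that is all you need for a quasi-isomorphism out of it to be a homotopy equivalence onto the semi-free $K$.
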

\begin{proof}
It is well-known. One considers the case of a free dg category $C$ with 0 differential. Then its Hochschild cohomological complex $\Hoch^\udot(C,M)$ with coefficients in any bimodule $M$ is quasi-isomorphic to the sub-complex which is the total complex of the following two-terms bicomplex
$$
\Hoch^\udot_\lin(C,M)=\Big\{0\to \underset{\deg=0}{\prod_{X\in C}M(X)}\to\underset{\deg=1}{\prod_{X,Y\in C}\Hom_\k\big(C(X,Y), M(X,Y)\big)}\to 0\Big\}
$$
such that for the degree 1 elements the Leibniz rule analogous to \eqref{eqder} holds. 

Our case is corresponded to $M(X,Y)=D(F(X),G(Y))$. Next, when $C$ is an $I$-cell complex with non-zero differential, an elementary argument with spectral sequiences shows that the imbedding
$$
\Hoch^\udot_\lin(C,M)\hookrightarrow\Hoch^\udot(C,M)
$$
remains a quasi-isomorphism.
\end{proof}

\subsubsection{\sc }\label{proofkl3}
Turn back to our proof of Proposition \ref{proptabpath}. Starting with a solid arrow diagram in \eqref{f2lemma2}, we constructed $g\colon w\to z$, and $\xi\colon x\to z$ such that 
\begin{equation}\label{lemmax01}
d\xi=b\circ f-g\circ a
\end{equation}
which expresses that the constructed $\Phi\in\hat{C}(x\xrightarrow{f}y,w\xrightarrow{g}z)$ is closed. Then we constructed $\Phi^\prime\colon \hat{C}(w\xrightarrow{g}z,x\xrightarrow{f}y)$ such that 
\begin{equation}\label{lemmax02}
\begin{aligned}
\ &\Phi^\prime \Phi=\id({x\xrightarrow{f}y})+d\Gamma\\
&\Phi \Phi^\prime=\id({w\xrightarrow{g}z})+d\Gamma^\prime
\end{aligned}
\end{equation}

The main point in the following Lemma is that one can replace $g\colon w\to z$ to a cohomologous map $\bar{g}\colon w\to z$, in an arbitrary way, and then re-define the other dashed arrows in \eqref{f2lemma2} and \eqref{f2lemma6} appropriately (keeping the solid arrows fixed), such that that the statement of Proposition \ref{proptabpath} still holds. That is, there is more freedom in the choice of $g$ than one could reckon directly based on the proof of Proposition \ref{proptabpath}. 
\begin{lemma}\label{lemmax}
In the notations as above, replace $g\colon w\to z$ by a cohomologous $\bar{g}\colon w\to z$, 
\begin{equation}\label{eqx1}
\bar{g}=g+d(\varkappa)
\end{equation}
and define 
\begin{equation}\label{eqx2}
\bar{\xi}=\xi-\varkappa a
\end{equation}
Then define $\bar{\Phi}\in\hat{C}(x\xrightarrow{f}y,w\xrightarrow{\bar{g}}z)$ as 
\begin{equation}\label{eqx3}
\xymatrix{
x\ar[r]^{a}\ar[dr]^{\bar{\xi}}\ar[d]_{f}&w\ar[d]^{\bar{g}}\\
y\ar[r]_{b}&z
}
\end{equation}
where $a,b$ are the same as for $\Phi$. Then $\bar{\Phi}$ is closed morphism in $\hat{C}$, and there exists closed $\bar{\Phi}^\prime\in \hat{C}(w\xrightarrow{\bar{g}}z,x\xrightarrow{f}y)$ such that
\begin{equation}\label{eqx4}
\begin{aligned}
\ &\bar{\Phi}^\prime \bar{\Phi}=\id({x\xrightarrow{f}y})+d\bar{\Gamma}\\
&\bar{\Phi }\bar{\Phi}^\prime=\id({w\xrightarrow{\bar{g}}z})+d\bar{\Gamma}^\prime
\end{aligned}
\end{equation}
\end{lemma}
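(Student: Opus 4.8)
The plan is to treat $\bar{\Phi}$ and $\bar{\Phi}^\prime$ not as fresh constructions but as explicit perturbations of the morphisms $\Phi,\Phi^\prime$ built in the proof of Lemma \ref{lemmaf2}, and to verify that every change caused by replacing $g$ with $\bar{g}=g+d\varkappa$ enters only through boundaries. Concretely, I would keep the quasi-inverses $a^\prime,b^\prime$ and all the homotopy data $h_x,h_y,h_w,h_z,r_1,r_2$ of Lemma \ref{lemmamax} and \eqref{f2lemma4bis} unchanged, and correct only the off-diagonal (homotopy) components of the morphisms and of the two homotopies $\Gamma,\Gamma^\prime$.

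First I would dispose of the closedness statements. Since $a$ is closed, the graded Leibniz rule gives $d(\varkappa a)=(d\varkappa)a$, so from \eqref{lemmax01} one obtains $d\bar{\xi}=d\xi-(d\varkappa)a=(b f-g a)-(d\varkappa)a=b f-\bar{g}a$, which is exactly the closedness of $\bar{\Phi}$ in \eqref{eqx3}. For the candidate inverse I would set $\bar{\Phi}^\prime=(a^\prime,b^\prime,\bar{\mu})$ with $\bar{\mu}=\mu+b^\prime\varkappa$; since $b^\prime$ is closed, $d(b^\prime\varkappa)=b^\prime\,d\varkappa$, and then \eqref{f2lemma8} upgrades to $d\bar{\mu}=(b^\prime g-f a^\prime)+b^\prime\,d\varkappa=b^\prime\bar{g}-f a^\prime$, so $\bar{\Phi}^\prime$ is closed as well.

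The heart of the argument is the behaviour of the two compositions under the composition rule of $\hat{C}$ from Section \ref{sectionpathtab}. For $\bar{\Phi}^\prime\bar{\Phi}$ the off-diagonal entry is $\bar{\mu}a+b^\prime\bar{\xi}=(\mu+b^\prime\varkappa)a+b^\prime(\xi-\varkappa a)=\mu a+b^\prime\xi$, i.e. the two terms $\pm b^\prime\varkappa a$ cancel and the composition \emph{literally} equals $\Phi^\prime\Phi$; hence the first identity in \eqref{eqx4} holds with $\bar{\Gamma}=\Gamma$ unchanged. For $\bar{\Phi}\bar{\Phi}^\prime$ the off-diagonal entry becomes $\bar{\xi}a^\prime+b\bar{\mu}=(\xi a^\prime+b\mu)-\varkappa a a^\prime+b b^\prime\varkappa$, and inserting $a a^\prime=\id_w+d h_w$ and $b b^\prime=\id_z+d h_z$ from \eqref{f2lemma4} this equals $t-\varkappa\,dh_w+dh_z\,\varkappa$, where $t$ is the off-diagonal entry of $\Phi\Phi^\prime$. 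I would then check that
\[
\bar{\Gamma}^\prime=\big(h_w,\,h_z,\ \epsilon^\prime+\varkappa h_w+h_z\varkappa\big),
\]
with $\epsilon^\prime$ as in \eqref{eqstrange}, reproduces exactly this off-diagonal entry: expanding $d\bar{\Gamma}^\prime$ via the differential of $\hat{C}$ with structure map $\bar{g}$, the terms $(d\varkappa)h_w$ and $h_z\,d\varkappa$ arising from $d(\varkappa h_w)$, $d(h_z\varkappa)$ cancel against those hidden in $\bar{g}h_w=(g+d\varkappa)h_w$ and $h_z\bar{g}=h_z(g+d\varkappa)$, leaving precisely $t-\varkappa\,dh_w+dh_z\,\varkappa$. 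Since the diagonal entries of $d\bar{\Gamma}^\prime$ are $dh_w,dh_z$, this yields the second identity in \eqref{eqx4}.

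I expect the only genuine obstacle to be the sign bookkeeping in this last step: one must expand $d(\varkappa h_w)$ and $d(h_z\varkappa)$ with the graded Leibniz rule (both $\varkappa$ and $h_w,h_z$ carry odd degree $-1$) and match the resulting cross terms against the $d\varkappa$-contributions of $\bar{g}h_w$ and $h_z\bar{g}$ in the off-diagonal of the differential on $\hat{C}$. The conceptual content is transparent: the perturbation $g\mapsto g+d\varkappa$, together with the compensations $\xi\mapsto\xi-\varkappa a$ and $\mu\mapsto\mu+b^\prime\varkappa$, leaves the first composition strictly invariant and alters the second only by a boundary, so the two identities \eqref{eqx4} exhibit $\bar{\Phi}^\prime$ as a two-sided homotopy inverse of $\bar{\Phi}$ and the homotopy-invertibility of Lemma \ref{lemmaf2} survives verbatim.
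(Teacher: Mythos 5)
Your proposal is correct and coincides with the paper's own argument: the same correction terms $\bar{\mu}=\mu+b^\prime\varkappa$, the same cancellation of $\pm b^\prime\varkappa a$ giving $\bar{\Phi}^\prime\bar{\Phi}=\Phi^\prime\Phi$ with $\bar{\Gamma}=\Gamma$, and the same homotopy $\bar{\Gamma}^\prime=(h_w,h_z,\epsilon^\prime+\varkappa h_w+h_z\varkappa)$ absorbing the residual $-(d\varkappa)h_w+h_z(d\varkappa)$ terms via the $\bar g$-dependence of the differential in $\hat{C}$. The sign bookkeeping you defer is exactly the computation carried out in the paper, and it closes as you expect.
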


\begin{proof}
We define $a^\prime,b^\prime$ in the same way as in the proof of Proposition \ref{proptabpath}, as well as $h_x,h_y,h_z,h_y$. In particular, \eqref{f2lemma4} holds.

The closedness of $\bar{\Phi}$ amounts to the equation
\begin{equation}\label{eqx5}
d\bar{\xi}=bf-\bar{g}a
\end{equation}
which follows directly from \eqref{lemmax01}, \eqref{eqx1}, and \eqref{eqx2}.

Define $\bar{\Phi}^\prime\in\hat{C}(w\xrightarrow{\bar{g}}z,x\xrightarrow{f}y)$ as
\begin{equation}\label{eqx6}
\xymatrix{
w\ar[r]^{a^\prime}\ar[rd]^{\bar{\mu}}\ar[d]_{\bar{g}}&x\ar[d]^{f}\\
z\ar[r]_{b^\prime}&
}
\end{equation}
where
\begin{equation}\label{eqx7}
\bar{\mu}=\mu+b^\prime\varkappa
\end{equation}
where $\mu$ is as in \eqref{f2lemma7}. Then the closedness of $\bar{\Phi}^\prime$
\begin{equation}\label{eqx8}
d\bar{\mu}=b^\prime\bar{g}-fa^\prime
\end{equation}
follows directly from \eqref{f2lemma8}, \eqref{eqx1}, and \eqref{eqx7}.

It remains to prove \eqref{eqx4}.

The composition $\bar{\Phi}^\prime\bar{\Phi}$ is given by
\begin{equation}\label{eqx9}
\xymatrix{
x\ar[d]_{f}\ar[rr]^{a^\prime a}\ar[rrd]^{\ell_1}&&x\ar[d]^{f}\\
y\ar[rr]_{b^\prime b}&&y
}
\end{equation}
where
\begin{equation}\label{eqx10}
\ell_1=b^\prime \bar{\xi}+\bar{\mu}a
\end{equation}
We have:
\begin{equation}\label{eqx11}
\ell_1=(b^\prime \xi-b^\prime\varkappa a)+(\mu a+b^\prime \varkappa a)
\end{equation}
We see that the counter-terms $-b^\prime \varkappa a$ and $b^\prime \varkappa a$ are mutually cancelled, and finally
\begin{equation}\label{eqx12}
\bar{\Phi}^\prime\bar{\Phi}=\Phi^\prime\Phi
\end{equation}
Therefore,
\begin{equation}\label{eqx13}
\bar{\Phi}^\prime\bar{\Phi}=\id(x\xrightarrow{f}y)+d\bar{\Gamma}
\end{equation}
with $\bar{\Gamma}=\Gamma$, see \eqref{f2lemma10}, \eqref{f2lemma11}. 

The composition $\bar{\Phi}\bar{\Phi}^\prime$ is equal to
\begin{equation}\label{eqx14}
\xymatrix{
w\ar[d]_{\bar{g}}\ar[rr]^{aa^\prime}\ar[rrd]^{\ell_2}&&w\ar[d]^{\bar{g}}\\
z\ar[rr]_{bb^{\prime}}&&z
}
\end{equation}
where
\begin{equation}\label{eqx15}
\ell_2=\bar{\xi}a^\prime+b\bar{\mu}=(\xi a^\prime+b\mu)+(-\varkappa a a^\prime +b b^\prime \varkappa)
\end{equation}
Here the counter-terms are not cancelled, but the vertical map $g$ is also replaced by $\bar{g}=g+d\varkappa$.

One has:
\begin{equation}
\ell_2=(\xi a^\prime+b\mu)+(-\varkappa dh_w+dh_z\varkappa)=(\xi a^\prime+b\mu)+d(\varkappa h_w+h_z\varkappa)+(-(d\varkappa) h_w+h_z d\varkappa)
\end{equation}
Define $\bar{\Gamma}^\prime$ as
\begin{equation}
\xymatrix{
w\ar[r]^{h_w}\ar[d]_{\bar{g}=g+d\varkappa}\ar[dr]^{\delta}&w\ar[d]^{\bar{g}=g+d\varkappa}\\
z\ar[r]_{h_z}&z
}
\end{equation}
where
\begin{equation}
\delta=\epsilon^\prime+\varkappa h_w+h_z\varkappa
\end{equation}
see \eqref{eqstrange}.

Then one has
\begin{equation}\label{eqx16}
\bar{\Phi}\bar{\Phi}^\prime=\id(w\xrightarrow{\bar{g}}z)+d\bar{\Gamma}^\prime
\end{equation}

\end{proof}

\subsubsection{\sc }\label{proofkl4}
We prove Key-Lemma \ref{keylemma}.

In Section \ref{proofkl1} we have seen that the complex 
$$
(p_0\times p_1)^{-1}(F,G)\subset\Coh_\dg(C,\hat{D})
$$
is identified with $\Coh_\dg^\lin(C,D)(F,G)$, and that the functor $\Coh_\dg(C,\hat{D})\subset \widehat{\Coh_\dg(C,D)}$ is full.

In Lemma \ref{lemmaproofkl1} we have seen that, for $C$ an $I$-cell complex, the imbedding  $\Coh_\dg^\lin(C,D)(F,G)\hookrightarrow\Coh_\dg(C,D)(F,G)$ is a quasi-isomorphism.

Then we can apply Lemma \ref{lemmax}, taking for $\bar{g}$ a cohomologous linear coherent natural transformation. It shows that, for $C$ an $I$-cell complex, $\Coh_\dg(C,\hat{D})$ is a path object for the dg category $\Coh_\dg(C,D)$.

Key-Lemma \ref{keylemma} is proven.

\qed

\comment
\appendix

\section{\sc A construction of the set $\Hot(C,D)$ where $C$ is semi-free, via the convolution $L_\infty$ algebra}
In the Appendix, we recall the construction of the $\Hom$-set in the homotopy category $\Hot(C,D)$, where $C$ semi-free, via the Maurer-Cartan elements in  the ``convolution $L_\infty$ algebra''. 
The morphisms $C\to D$ are corresponded to the Maurer-Cartan elements in the convolution $L_\infty$ algebra, while the homotopy relation on them is given by the ``gauge action'' of degree 0 elements in the convolution $L_\infty$ algebra, in the sense of Deligne groupoid [GM].

After reminder on the homotopy relation in a closed model category, we consider at first the case of the closed model category of associative algebras over $\k$. After that, we discuss the homotopy relation in the closed model category of small dg categories over $\k$.

\subsection{\sc Reminder on the homotopy relations in a closed model category}\label{sectionlrhot}
Let $\mathscr{M}$ be a closed model category [Q], [GJ, Ch. II, Section 1], [Hir, Ch. 7]. Recall that one derives a {\it homotopy relation} on $\Hom_\mathscr{M}(C,D)$ from the basic axioms of a closed model category. There are two such relations. 

The first one $\sim_L$ uses {\it a cylinder object} of $C$, and is an equivalence relation when $C$ is cofibrant, in which case the realation $f\sim_L g$ does not depend on the choice of a cylinder object. 

The second one $\sim_R$ uses {\it a path object} of $D$, and is an equivalence relation when $D$ is fibrant, in which case the relation $f\sim_R g$ does not depend on the choice of a path object.

Moreover, when $C$ is cofibrant and $D$ is fibrant, $f\sim_L g$ holds iff $f\sim_R g$, see Proposition \ref{proplr} below.

{\it A cylinder object} of an object $C$ of $\mathscr{M}$ is a commutative triangle 
\begin{equation}
\xymatrix{
C\sqcup C\ar[dr]^{\nabla}\ar[d]_{i}\\
\tilde{C}\ar[r]_{\sigma}&C
}
\end{equation}
where $\nabla\colon C\sqcup C\to C$ is the canonical map defined from the identity map $C\to C$ on each summand, $i$ is a cofibration, and $\sigma$ is a weak equivalence. 

The cylinder object always exists but is not unique. 

The {\it left homotopy relation} $f\sim_L g$, for $f,g\colon C\to D$, is defined for a given choice of a cylinder object of $C$. It is defined as a commutative diagram
\begin{equation}
\xymatrix{
C\sqcup C\ar[d]_{i}\ar[dr]^{(f,g)}\\
\tilde{C}\ar[r]_h&D
}
\end{equation}
where $(f,g)$ is the map defined via $f$ and $g$ on the summands $C$, and the data $C\sqcup C\xrightarrow{i}\tilde{C}$ comes from some choice of a cylinder object.

In general, $\sim_L$ is not an equivalence relation on $\Hom(C,D)$, but it is when $C$ is cofibrant. See Proposition \ref{proplr} below. 

Recall also the path objects and the right homotopy relation.

Let $\mathscr{M}$ be a closed model category, $D$ an object of $\mathscr{M}$.

A {\it path object} of $D$ is a commutative triangle
\begin{equation}\label{eqpath}
\xymatrix{
&\hat{D}\ar[d]^{p=(p_0,p_1)}\\
D\ar[ur]^s\ar[r]_{\Delta\hspace{12pt}}&D\times D
}
\end{equation}
where $\Delta$ is the diagonal map, $s$ is a weak equivalence, and $p$ is a fibration.  The path objects always exist but are not unique.

Assume a path object of $D$ is chosen. One says that $f,g\in\Hom(C,D)$ are {\it right homotopic} with respect to the path object if there is a commutative triangle
\begin{equation}\label{equationrhot}
\xymatrix{
&\hat{D}\ar[d]^{p=(p_0,p_1)}\\
C\ar[ur]^{\theta}\ar[r]_{(f,g)\hspace{12pt}}&D\times D
}
\end{equation}

In general, $\sim_R$ is not an equivalence relation on $\Hom(C,D)$, but it is, if $D$ is fibrant, see Proposition \ref{proplr} below. 

\begin{prop}\label{proplr}
Let $\mathscr{M}$ be a closed model category. The following statements are true:
\begin{itemize}
\item[(i)] For a cofibrant object $C$, the left homotomy relation $\sim_L$ on $\Hom(C,D)$ is an equivalence relation. Moreover, in this case, if $f\sim_L g$ for one choice of the cylinder object of $C$, then $f\sim_L g$ for any other choice,
\item[(ii)] For a fibrant object $D$, the right homotopy relation $\sim_R$ on $\Hom(C,D)$ is an equivalence relation. Moreover, in this case, if $f\sim_R g$ for one choice of the path object of $D$, then $f\sim_R g$ for any other choice,
\item[(iii)] Let $C$ be cofibrant and $D$ be fibrant. Then on $\Hom(C,D)$ both homotopy relations coincide: $f\sim_L g\Leftrightarrow f\sim_R g$.
\end{itemize}
\end{prop}
See [GJ, Cor. II 1.9] for a proof.

\qed

The {\it homotopy category} $\Ho(\mathscr{M})$ is defined as a category whose objects are the same as the objects of $\mathscr{M}$, and whose morphisms $\Ho(\mathscr{M})(C,D)$ is $\mathscr{M}(RQ(C),RQ(D))/\sim_L$, or equally, $\mathscr{M}(RQ(C),RQ(D))/\sim_R$, where $RQ(X)$ is a fibrant cofibrant replacement of the object $X$.
The reader is refered to [GJ, Ch. II, Section 1] for more detail. 

\subsection{\sc The Deligne groupoid}
Here we recall the Deligne groupoid  $D(L)$, associated with an $L_\infty$ algebra $L$. We also recall its simplicial interpretation given in [H2] and [G].

Let $\g$ be an $L_\infty$ algebra over a field of characteristic 0, denote by $L_2,L_3,\dots$ its Taylor components. 

The objects of the groupoid $D(\g)$ are the Maurer-Cartan elements, that is the elements $x\in \g^1$ such that
\begin{equation}\label{appa1}
dx +\frac12L_2(x,x)+\frac16L_3(x,x,x)+\dots+\frac1{n!}L_n(x,x,\dots,x)+\dots=0
\end{equation}
(provided the convergence of the above series).

The morphisms are given by elements $h\in\g^0$, as follows. Each $x\in \g^0$ defines a vector field on $\g^1$:
\begin{equation}\label{appa2}
\dot{x}=dh+L_2(h,x)+\frac12L_3(h,x,x)+\dots+\frac1{(n-1)!}L_n(h,x,\dots,x)+\dots
\end{equation}
Denote this vector field by $v(h)$. 

One has:
\begin{lemma}
Let $\g$ be an $L_\infty$ algebra. The following statement are true:
\begin{itemize}
\item[(1)]
The vector fields $v(h)$, $h\in \g^0$, preserve the ``subvariety'' of Maurer-Cartan elements,
\item[(2)] 
The map $\g^0\to\Vect(\g^1)$, $h\mapsto v(h)$, is a map of Lie algebras. 
\end{itemize}
\end{lemma}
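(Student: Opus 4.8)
The plan is to reduce both assertions to one algebraic input, the twisting identities of $L_\infty$ algebras, and to keep all Koszul signs uniform by working after the d\'ecalage, on $\g[1]$, where the brackets become graded symmetric. Write $\MC(x)=\sum_{n\ge1}\tfrac1{n!}L_n(x^{\otimes n})$ for the Maurer--Cartan function, whose zeros are the objects of $D(\g)$, and for a fixed $x$ introduce the twisted differential
\[
d_x(a)=\sum_{n\ge1}\tfrac1{(n-1)!}L_n(x^{\otimes(n-1)},a).
\]
The first step is to record, by differentiating the two series termwise, the identities $D\MC_x=d_x$ on tangent vectors and $v(h)|_x=d_x(h)$ for the gauge vector field $v(h)$ of $h\in\g^0$; on $\g[1]$ these hold on the nose, because the symmetry of the shifted brackets makes the position of the distinguished slot irrelevant (in the unshifted picture they hold up to the d\'ecalage sign, which is precisely the sign in $\dot x=dh+L_2(h,x)+\cdots$). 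The structural observation I would isolate is that twisting by $x$ yields a curved $L_\infty$ algebra, so that the $L_\infty$ relations repackage into
\[
d_x^2(a)=\sum_{k\ge0}\tfrac1{k!}L_{k+2}\big(\MC(x),x^{\otimes k},a\big),
\]
exhibiting $d_x^2$ as a linear operator whose value is linear in $\MC(x)$; in particular $d_x^2=0$ on the Maurer--Cartan locus.

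Granting these, part (1) is immediate infinitesimally: for a Maurer--Cartan element $x$ one has
\[
D\MC_x\big(v(h)|_x\big)=d_x\big(d_x(h)\big)=d_x^2(h)=0,
\]
so $v(h)$ is tangent to $\{\MC=0\}$. To promote tangency to honest invariance of the subvariety under the flow, which is what the groupoid morphisms require, I would integrate: along an integral curve $\dot x_t=v(h)|_{x_t}$ one computes $\tfrac{d}{dt}\MC(x_t)=D\MC_{x_t}(\dot x_t)=d_{x_t}^2(h)=\Lambda(x_t)\big(\MC(x_t)\big)$ for a $t$-dependent linear operator $\Lambda(x_t)$ read off from the formula for $d_x^2$. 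Thus $t\mapsto\MC(x_t)$ solves a homogeneous linear ODE, so $\MC(x_0)=0$ forces $\MC(x_t)\equiv0$; this argument does not see any singularities of the locus.

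For part (2) the bracket on $\g^0$ is $L_2$, which is a genuine Lie bracket in the differential graded Lie algebra case of Goldman--Millson [GM] to which the Deligne groupoid properly belongs. I would compute the commutator of vector fields directly on the affine space $\g^1$: setting $\phi_h(x)=v(h)|_x$ and differentiating,
\[
D(\phi_h)_x(a)=\sum_{n\ge2}\tfrac1{(n-2)!}L_n\big(h,x^{\otimes(n-2)},a\big),
\]
so that $[v(h_1),v(h_2)]|_x=D(\phi_{h_2})_x(\phi_{h_1}(x))-D(\phi_{h_1})_x(\phi_{h_2}(x))$. Substituting $\phi_{h_1}(x),\phi_{h_2}(x)$ and grouping the terms by the number of $x$-slots, the generalized Jacobi identity of $\g$, applied with the two distinguished entries $h_1,h_2$ and all remaining entries equal to $x$, collapses the antisymmetrized difference to $\sum_{m\ge1}\tfrac1{(m-1)!}L_m\big(L_2(h_1,h_2),x^{\otimes(m-1)}\big)=v(L_2(h_1,h_2))|_x$. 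Hence $h\mapsto v(h)$ carries $L_2$ to the commutator bracket on $\Vect(\g^1)$, which is the assertion.

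The skeleton is short, so the real work, and the main obstacle, is sign bookkeeping: verifying $D\MC_x=d_x$ and $v(h)|_x=d_x(h)$ with the sign making $\dot x=dh+L_2(h,x)+\cdots$ consistent, and, above all, extracting the displayed formula for $d_x^2$ and the regrouping in part (2) from the homotopy Jacobi relations. I would neutralize this by carrying out every step on $\g[1]$ with the symmetric shifted brackets, where the generalized Jacobi identity has a single uniform form and both computations become formal manipulations of that one identity. A last point to flag is that for a genuinely higher $L_\infty$ algebra $\g^0$ need not be a strict Lie algebra under $L_2$, its Jacobiator being the boundary of an $L_3$-term; statement (2) is therefore to be read in the Goldman--Millson differential graded Lie algebra setting, or in the $L_\infty$ world as an identity modulo the higher homotopies recorded by $d_x^2$.
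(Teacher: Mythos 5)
The paper never actually proves this lemma: it is quoted as a standard fact from the Goldman--Millson/Hinich/Getzler circle of ideas and stated without argument, so there is no internal proof to compare against and your proposal must stand on its own. Your part (1) does: the identities $D\MC_x=d_x$ and $v(h)|_x=d_x(h)$, the curved-twist relation $d_x^2(a)=\pm\sum_{k\ge0}\tfrac1{k!}L_{k+2}(\MC(x),x^{\otimes k},a)$ giving infinitesimal tangency, and the upgrade to invariance under the flow via the homogeneous linear ODE $\tfrac{d}{dt}\MC(x_t)=\Lambda(x_t)(\MC(x_t))$ constitute the standard and correct argument. (In the pro-nilpotent setting the paper works in, you should say explicitly that convergence and uniqueness for this ODE are settled by induction on the filtration degree; otherwise "integrating the flow" is not available.)

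Part (2) has a genuine gap. Apply the generalized Jacobi identity to $(h_1,h_2,x^{\otimes(n-2)})$ and sort terms by where $h_1,h_2$ land. The commutator $[v(h_1),v(h_2)]|_x$ accounts exactly for the terms with $h_1,h_2$ in different slots, and the terms with both $h_1,h_2$ in the outer operation are proportional to $\MC(x)$ and vanish on the Maurer--Cartan locus; but the terms with both $h_1,h_2$ in the inner operation are $\sum_{i\ge2,\,j\ge1}\pm\tfrac{1}{(i-2)!(j-1)!}L_j\bigl(L_i(h_1,h_2,x^{\otimes(i-2)}),x^{\otimes(j-1)}\bigr)$, and only the $i=2$ piece is $v(L_2(h_1,h_2))|_x$. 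The $i\ge3$ pieces do not cancel: since each $L_i(h_1,h_2,x^{\otimes(i-2)})$ has degree $0$, they assemble into $d_x\bigl(\ell_2^x(h_1,h_2)-L_2(h_1,h_2)\bigr)$, where $\ell_2^x$ is the twisted binary bracket. So the true identity on the MC locus is $[v(h_1),v(h_2)]|_x=\pm d_x\bigl(\ell_2^x(h_1,h_2)\bigr)$, whose "argument'' depends on the point $x$; the span of the $v(h)$ is therefore not closed under commutator, and $h\mapsto v(h)$ is a Lie algebra map only when $L_{\ge3}=0$. Your closing caveat identifies exactly this obstruction, but it contradicts the displayed computation, which asserts that the Jacobi identity "collapses'' the commutator to $v(L_2(h_1,h_2))|_x$. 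As written, the proof of (2) is valid only for dg Lie algebras; for a genuine $L_\infty$ algebra the statement must either be weakened (an identity modulo $\MC(x)$ and the twisted corrections) or bypassed entirely by defining gauge equivalence through the simplicial Maurer--Cartan space of [H2], [G] --- which is in fact the route the surrounding text takes.
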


One can formally integrate $v(h)$, to a map $\exp(v(h))\colon \g^1\to \g^1$, provided the convergence of the exponent. 
By definition, $h\in \g^0$ gives a morphism $f_h\colon x\to \exp(v(h))(x)$ in the Deligne groupoid $D(\g)$. The composition of two of such morphisms is once again a morphism of this type, because $\g^0$ is a Lie subalgebra and due to the Campbell-Baker-Hausdorff formula, provided its convergence.

The problem of convergence is essential. In deformation theory, one takes an artinian commutative algebra $a$, and considers the $L_\infty$ algebra $\g\otimes a_+$, where $a_+$ is the nilpotent kernel of the augmentation map, $a_+^N=0$ for some $N$. Then all convergence problems are automatically solved.

In our case, $\g=\Conv(C,D)$ is pro-nilpotent by construction. It guarantees that all formulas we deal with, such as \eqref{appa1}, \eqref{appa2}, the formula for $\exp(v(h))$, and the CBH formula, do converge. Therefore, the Deligne groupoid $D(\Conv(C,D))$ is well-defined.

With a groupoid $G$ one associates its set of connected components $\pi_0(G)$.

Let $\g$ be an $L_\infty$ algebra, $D(\g)$ its Deligne groupoid. The set $\pi_0(D(\g))$ has a description as $\pi_0(\MC_\ldot(\g))$ of a simplicial set $\MC_\ldot(\g)$. It is due to Hinich [H2] and Getzler [G], see also [DR].

Denote by $\Omega^\udot_n$ the commutative dg algebra of polynomial differential forms on $\Delta_n$:
$$
\Omega_n^\udot=\k[x_0,\dots,x_n, dx_0,\dots,dx_n]/(x_0+\dots+x_n-1, dx_0+\dots+dx_n)
$$
where $\deg x_i=0, \deg dx_i=1$, and $d(x_i)=dx_i, d(dx_i)=0$. 

Taking $\g_n:=\g\hat{\otimes} \Omega^\udot_n$, one gets a simplicial $L_\infty$ algebra $\g_\ldot$. Then define 
$$
\MC_n(\g):=\MC(\g_n)
$$
It is a simplicial set. It is known to be a Kan simplicial set, which is a higher generalization of the Deligne groupoid. 
We need here only its 1-categorical counterpart.

One has:
\begin{prop}[{[H2],[G]}]\label{prophg}
For an $L_\infty$ algebra $\g$ over a field of characteristic 0, one has:
$$
\pi_0(\MC_\ldot(\g))=\pi_0(D(\g))
$$
\end{prop}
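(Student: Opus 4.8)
The plan is to reduce the statement to a comparison of two equivalence relations on the set $\MC(\g)$ of Maurer--Cartan elements. Since $\Omega_0^\udot=\k$, we have $\MC_0(\g)=\MC(\g)$, and $\pi_0(\MC_\ldot(\g))$ is the quotient of $\MC(\g)$ by the equivalence relation generated by $1$-simplices, where a $1$-simplex is an element of $\MC_1(\g)=\MC(\g\hat{\otimes}\Omega_1^\udot)=\MC(\g\hat{\otimes}\k[t,dt])$ whose two faces are its evaluations at $t=0$ and $t=1$. On the other side, $\pi_0(D(\g))$ is the quotient of $\MC(\g)$ by gauge equivalence, namely $x\sim_{\mathrm g}x'$ iff $x'=\exp(v(h))(x)$ for some $h\in\g^0$. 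As recalled above, gauge equivalence is already an equivalence relation: the $\exp(v(h))$ form an action of the pro-unipotent group $\exp(\g^0)$, with composition governed by the Campbell--Baker--Hausdorff formula, which converges because $\g=\Conv(C,D)$ is pro-nilpotent. Hence it suffices to prove that the ``$1$-simplex relation'' coincides, as a relation on $\MC(\g)$, with gauge equivalence.

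First I would decode the Maurer--Cartan equation in $\g\hat{\otimes}\k[t,dt]$. Writing a general degree-$1$ element as $\tilde x=x(t)+\lambda(t)\,dt$ with $x(t)\in\g^1\hat{\otimes}\k[t]$ and $\lambda(t)\in\g^0\hat{\otimes}\k[t]$, and splitting the equation $d\tilde x+\sum_{n\ge2}\tfrac1{n!}L_n(\tilde x,\dots,\tilde x)=0$ according to the power of $dt$, one obtains
\[
d x(t)+\sum_{n\ge2}\tfrac1{n!}L_n(x(t),\dots,x(t))=0,\qquad
\dot x(t)+\sum_{n\ge1}\tfrac1{(n-1)!}L_n(\lambda(t),x(t),\dots,x(t))=0 .
\]
The first equation says that $x(t)$ is, for every $t$, a Maurer--Cartan element of $\g$; the second is exactly the integral-curve equation (up to sign) for the time-dependent gauge vector field $v(\lambda(t))$ recalled above. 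Thus a $1$-simplex is precisely a polynomial path of Maurer--Cartan elements tangent to the gauge foliation, and it witnesses a relation between its endpoints $x(0)$ and $x(1)$.

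The two directions then run as follows. For gauge $\Rightarrow$ $1$-simplex, given $x'=\exp(v(h))(x)$ I would take $\lambda(t)=h$ constant and $x(t)=\exp(t\,v(h))(x)$; by pro-nilpotence the defining series terminates modulo each step of the filtration, so $x(t)$ is a well-defined element of $\g^1\hat{\otimes}\k[t]$, it solves both displayed equations, and its faces are $x$ and $x'$. For $1$-simplex $\Rightarrow$ gauge, given $\tilde x=x(t)+\lambda(t)\,dt$ I would integrate the time-dependent flow equation: the holonomy from $0$ to $1$ is the time-ordered exponential of $\lambda(t)$, which by the CBH formula and pro-nilpotence is realized by a \emph{single} element $h\in\g^0$, and $\exp(v(h))$ then carries $x(0)$ to $x(1)$, so $x(0)\sim_{\mathrm g}x(1)$.

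Finally, the $1$-simplex relation is reflexive (constant paths) and symmetric (reverse $t\mapsto1-t$), and by the two directions it equals gauge equivalence; since the latter is already transitive, the equivalence relation it generates is gauge equivalence itself. Therefore $\pi_0(\MC_\ldot(\g))=\MC(\g)/\!\sim_{\mathrm g}=\pi_0(D(\g))$. I expect the main obstacle to be the direction $1$-simplex $\Rightarrow$ gauge, namely showing that the time-dependent gauge flow integrates to one gauge transformation $\exp(v(h))$: this is where the time-ordered exponential (Magnus expansion), the CBH formula, and the convergence afforded by pro-nilpotence must be handled carefully, together with the bookkeeping of the $L_\infty$ signs in the splitting of the Maurer--Cartan equation by the degree in $dt$.
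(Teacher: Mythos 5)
Your argument follows the same route as the paper's (sketched) proof: decompose the Maurer--Cartan equation in $\g\hat{\otimes}\k[t,dt]$ into its $(dt)^0$ and $(dt)^1$ components, identify a $1$-simplex as a polynomial path of Maurer--Cartan elements flowing along the gauge vector field, and match endpoints. The paper merely asserts that the case of $t$-dependent gauge parameter ``is reduced to'' the constant case; your treatment of that reduction via the time-ordered exponential, the Campbell--Baker--Hausdorff formula, and pro-nilpotence is precisely the step the paper omits, and it is correct.
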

\begin{proof}
We only sketch the argument, referring the reader to [H2], [G], and [DR] for more detail.

In fact, to compute $\pi_0(\MC_\ldot(\g))$ one needs only $\MC_0(\g)=\MC(\g)$ and $\MC_1(\g)=\MC(\g\hat{\otimes} \Omega_1^\udot)$.

Let $h\in \g^0$, and let $f(t)\in \g^1\hat{\otimes}\k[t]$ be such that
\begin{equation}\label{appa10}
K=-h\otimes dt+f(t)\in \MC(\g\hat{\otimes}\Omega_1^\udot)
\end{equation}
(we identify $\Omega_1^\udot=\k[t,dt]$ with $d(t)=dt$, where $t=x_0-x_1$). 

We assume that $h$ does not depend on $t$; the general case is reduced to this one.

The Maurer-Cartan equation for $K$ results in two equations, which are vanishing of $(dt)^1$ and $(dt)^0$ components. They are:
\begin{equation}\label{appa11}
-{d_\g}h+\dot{f}(t)-L_2(h,f(t))-\frac12L_3(h,f(t),f(t))-\dots=0
\end{equation}
and
\begin{equation}\label{appa12}
d_\g(f(t))+\frac12L_2(f(t),f(t))+\frac16L_3(f(t),f(t),f(t))+\dots=0
\end{equation}
Equation \eqref{appa12} means that $f(t)$ is a Maurer-Cartan element, or, equivalently, that the specification $f(a)$ is a Maurer-Cartan element, for any $a\in \k$. Equation \eqref{appa11} just means that $f(t)$ is a solution of the equation \eqref{appa2}.

\end{proof}

\subsection{\sc An equivalent version}
In Proposition \ref{prophg}, the morphisms in the Deligne groupoid of an $L_\infty$ algebra $\g$ are interpretated via the Maurer-Cartan elements in the bigger $L_\infty$ algebra $\g\otimes \k[t,dt]$. There are two maps of $L_\infty$ algebras 
$$
p_0,p_1\colon \g\otimes \k[t,dt]\to \g
$$
given by evaluation maps at $t=0$ and at $t=1$, correspondingly. 

It gives rise to two maps 
$$
p_0,p_1\colon \MC(\g\otimes \k[t,dt])\to\MC(\g)
$$
and Proposition \ref{prophg} says that two Maurer-Cartan elements $\alpha_0,\alpha_1\in \MC(\g)$ are connected by a morphism in the eligne grouppoid $D(\g)$ if and only if there is a Maurer-Cartan element $A\in \MC(\g\otimes \k[t,dt])$ such that 
$$
p_0(A)=\alpha_0,\ p_1(A)=\alpha_1
$$

Here we replace $\g\otimes \k[t,dt]$ by a ``smaller'' quasi-isomorphic $L_\infty$ algebra, what makes it possible to find an equivalent criterium when $\alpha_0$ and $\alpha_1$ are connected by a morphism in the Deligne groupoid $D(\g)$.

Consider the following commutative dg algebra $\varkappa$.

As a graded vector space, $\varkappa$ has two-dimensional component in degree 0, and 1-dimensional component in degree 1 (all other components vanish).

Denote by $x_0$ and $x_1$ degree 0 elements and by $x_{01}$ degee 1 elements. Define the differential as 
$$
d(x_0)=d(x_1)=x_{01},\ d(x_{01})=0
$$
Define a commutative algebra structure as
$$
x_0^2=x_0,\  x_1^2=x_1, \ x_0x_1=0,\  2x_0x_{01}=x_{01},\ 2x_1x_{01}=x_{01},\  x_{01}^2=0
$$
\begin{lemma}
Denote by $i\colon \varkappa\to \k[t,dt]$ the map sending $x_0\to t$
\end{lemma}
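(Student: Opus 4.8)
The plan is to read the truncated statement as the assertion that
\[
i\colon \varkappa\to\k[t,dt],\qquad x_0\mapsto t,\quad x_1\mapsto 1-t,\quad x_{01}\mapsto dt,
\]
is a quasi-isomorphism which underlies a contraction of $\k[t,dt]$ onto $\varkappa$ compatible with the two endpoint evaluations; this is exactly what licenses replacing $\g\hat{\otimes}\k[t,dt]$ by the smaller $\g\hat{\otimes}\varkappa$ in the computation of $\pi_0(D(\g))$. First I would check that $i$ is a map of complexes: the only nontrivial identities are $d(i(x_0))=dt=i(x_{01})$ and $d(i(x_1))=d(1-t)=-dt=i(dx_1)$, the latter forcing one to read the differential as $d(x_1)=-x_{01}$ (equivalently, the closed unit of $\varkappa$ is $x_0+x_1$). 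Computing cohomology, $H^\bullet(\varkappa)=\k$ sits in degree $0$ and is generated by $x_0+x_1$, while $H^\bullet(\k[t,dt])=\k$ in degree $0$ is generated by $1$; since $i(x_0+x_1)=1$, the map $i$ is a quasi-isomorphism.

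To use this in the Deligne groupoid I would promote $i$ to an explicit strong deformation retract $(i,\pi,h)$. Take the projection $\pi\colon\k[t,dt]\to\varkappa$ given by $\pi(f)=f(1)\,x_0+f(0)\,x_1$ in degree $0$ and $\pi(g\,dt)=\bigl(\int_0^1 g\bigr)x_{01}$ in degree $1$, and the contracting homotopy $h$ of degree $-1$ given by $h|_{\deg 0}=0$ and $h(g\,dt)=\int_0^t g(s)\,ds-t\int_0^1 g(s)\,ds$. A direct check yields $\pi i=\id_\varkappa$, the contraction identity $\id-i\pi=dh+hd$, and the side conditions $h^2=0$, $hi=0$, $\pi h=0$; integration of polynomials is used throughout, so $\cchar \k=0$ is essential here. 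Endpoint compatibility is then immediate, since evaluation at $t=0$ (resp. $t=1$) composed with $i$ is the augmentation of $\varkappa$ sending $x_0,x_1$ to $0,1$ (resp. $1,0$). As a sanity check, the transferred product $\pi\circ m_{\k[t,dt]}\circ(i\otimes i)$ reproduces exactly the multiplication on $\varkappa$ recorded before the Lemma (e.g. $\pi(t\cdot dt)=\tfrac12 x_{01}$), confirming that the product on $\varkappa$ is the homotopy-transferred one.

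Finally, since $(i,\pi,h)$ is a contraction and $\k[t,dt]$ is a commutative dg algebra, the homotopy transfer theorem equips $\varkappa$ with a $C_\infty$-structure and $\g\hat{\otimes}\varkappa$ with an $L_\infty$-structure together with an $L_\infty$ quasi-isomorphism to $\g\hat{\otimes}\k[t,dt]$; this gives the bijection on $\MC$ and on $\pi_0$ that makes $\g\hat{\otimes}\varkappa$ the advertised smaller model. The main obstacle I anticipate is that $i$ is \emph{not}, and cannot be, a morphism of unital commutative dg algebras: $x_0,x_1$ are idempotents, while the only idempotents in $\k[t]\subset\k[t,dt]$ are $0$ and $1$, so no algebra map can send $x_0\mapsto t$. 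Hence the comparison $\g\hat{\otimes}\varkappa\simeq\g\hat{\otimes}\k[t,dt]$ does not follow from naive functoriality of $\g\hat{\otimes}(-)$ and must be routed through the explicit homotopy $h$ above; the one genuinely computational point is then tracking the signs in the transferred $L_\infty$ brackets.
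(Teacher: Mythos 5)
You should know at the outset that this ``lemma'' is a truncated fragment: it sits inside a block the author has commented out of the source (the entire appendix on the convolution $L_\infty$-algebra is wrapped in the author's \texttt{comment} macro), the statement breaks off after naming the map $i$, and no assertion or proof appears anywhere in the paper. So there is no paper proof to compare against; what you have actually done is reconstruct the intended claim, and your reconstruction --- that $i$ is the Whitney elementary-forms quasi-isomorphism underlying Dupont's contraction of $\k[t,dt]$ onto the cellular cochains of the interval, compatible with the two endpoint evaluations --- is surely the right one given the surrounding text, which announces that $\k[t,dt]$ will be replaced by a ``smaller'' quasi-isomorphic model. Your computations check out: $(i,\pi,h)$ with $\pi(f)=f(1)x_0+f(0)x_1$, $\pi(g\,dt)=(\int_0^1 g)\,x_{01}$ and $h(g\,dt)=\int_0^t g-t\int_0^1 g$ is the standard special deformation retract with $\pi i=\id$, $\id-i\pi=dh+hd$ and the side conditions $h^2=hi=\pi h=0$; and your two warnings are genuine defects of the text as written: one must have $d(x_1)=-x_{01}$ (not $+x_{01}$ as printed) for $i$ to be a chain map and for the unit $x_0+x_1$ to be closed, and $i$ cannot be an algebra map since $t$ is not idempotent.

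One further caveat reinforces your own conclusion that everything must be routed through homotopy transfer: the product the paper writes on $\varkappa$ is not even associative, since $(x_0x_0)x_{01}=\tfrac12 x_{01}$ while $x_0(x_0x_{01})=\tfrac14 x_{01}$, so $\varkappa$ is not a strict cdga at all. The displayed product is only the binary operation $\pi\circ m\circ(i\otimes i)$ of the transferred $C_\infty$-structure, and the higher transferred operations on the cochains of $\Delta^1$ are known not to vanish (they produce the Bernoulli numbers, as in the Lawrence--Sullivan interval). Hence your final step is not optional bookkeeping: the comparison of Maurer--Cartan sets and of $\pi_0$ between $\g\hat{\otimes}\varkappa$ and $\g\hat{\otimes}\k[t,dt]$ genuinely requires the full $C_\infty$/$L_\infty$ transfer package, not merely the quasi-isomorphism of complexes. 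With that point made explicit, your argument is complete and is, as far as one can tell from the fragment, the proof the author intended to write.
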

\subsection{\sc The case of associative dg algebras}
\subsubsection{\sc The convolution $L_\infty$ algebra}
Recall that a dg associative algebra $A$ over $\k$ is called {\it semi-free} if there is an exhaustive ascending filtration $$A_0\subset A_1\subset A_2\subset \dots$$
where each $A_i$ is a dg associative algebra, the underlying graded algebra of $A_{i+1}$ is freely generated by $A_i$ and a set of elements $V_{i+1}=\{f_{i+1,1},\dots,f_{i+1,n_{i+1}}\}$, and 
$$
d(f_{i+1,k})\subset A_i \text{  for any $k$}
$$
(the latter codition for $i=-1$ means that $d(f_{0,k})=0$ for any $k$).

Similarly, let in the above definition $A_0$ be some dg associative algebra (with possibly non-zero differential), and one has an exhaustive ascending filatration $A_0\subset A_1\subset A_2\subset\dots$ on $A$, such that each $A_i$ is a dg associative algebra, the underlying graded algebra of $A_{i+1}$ is freely generated by $A_i$ and a set of elements $V_{i+1}=\{f_{i+1,1},\dots,f_{i+1,n_{i+1}}\}$, and 
$$
d(f_{i+1,k})\subset A_i \text{  for any $k$ and for $i\ge 0$}
$$
(that is, the condition for $i=-1$ is dropped). Then the imbedding $$i\colon A_0\to A$$ is called {\it a standard cofibration}.

Recall that the category of dg associative algebras over a field $\k$ admits a closed model structure due to Hinich [H1]. Its weak equivalences are quasi-isomorphisms, fibrations are component-wise surjective maps, and cofibrations are the maps satisfying the left lifting property with respect to acyclic fibrations, see [H1, Theorem 2.2.1]. One can prove that a map in this closed model structure is a cofibration if it is a retract of a standard cofibration, see [H1, Remark 2.2.5].

Let $C$ be a semi-free dg associative algebra, $V$ its space of generators. Then $V[1]$ becomes an $A_\infty$ coassociative coalgebra. Indeed, this $A_\infty$ coalgebra structure is given by the differential on the free algebra $C=T_+(V)=T_+(V[1][-1])$, making it a dg associative algebra. (Here $T_+(V)=\Ker(\varepsilon\colon T(V)\to\k)$ is the kernel of the augmentation map).

Let $D$ be an arbitrary associative dg algebra, and $f\colon C\to D$ a dg algebra map. It is given by its restriction to generators $V$, which is a map $f_0\colon V\to D$, compatible with the differential. The compatibility with the differential admits the following interpretation. 

Set $A=\Hom_\k(V[1],D)$. It is a non-unital $A_\infty$ algebra. Indeed, the Taylor components
$$
m_n\colon A^{\otimes n}\to A[2-n]
$$
are given as ``convolutions'':
\begin{equation}\label{convprod}
V[1]\xrightarrow{\delta_{n-1}}V[1]^{\otimes n}[n-2]\xrightarrow{t_1\otimes\dots\otimes t_n}D^{\otimes n}[n-2]\xrightarrow{m}D[n-2]
\end{equation}
where $\delta_{n-1}$ is the $(n-1)$-st Taylor component of the $A_\infty$ coalgebra $V[1]$, $t_1,\dots,t_n\in A$, and $m$ is given by the iterated product in $D$. 

The skew-symmetrisation of each Taylor component makes $A$ an $L_\infty$ algebra. We denote this $L_\infty$ by $\Conv(C,D)$.

\subsubsection{\sc }
Let $C,D$ be two associative dg algebras, with $C$ semi-free. Recall that the Maurer-Cartan elements in $\Conv(C,D)$ are in 1-to-1 correspondence with maps of dg algebras $C\to D$: 
\begin{equation}
\mathrm{MC}(\Conv(C,D))=\Alg_\dg(C,D)
\end{equation}

\begin{theorem}
Let $C,D$ be dg associative algebras, with $C$ semi-free. Let $f,g\in\Alg_\dg(C,D)$. Then $f\sim_R g$ if and only if the Maurer-Cartan elements $f$ and $g$ are connected by a morphism (corresponded to $v(h)$, $h\in \Conv(C,D)^0$), in the Deligne groupoid $D(\Conv(C,D))$. 
\end{theorem}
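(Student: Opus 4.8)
The plan is to realize the relation $\sim_R$ through the explicit path object $D\otimes\k[t,dt]$ available in characteristic $0$, and then to translate the existence of a path into the convolution $L_\infty$ picture, where it becomes precisely a morphism of the Deligne groupoid. First, since $\cchar\k=0$, a path object of $D$ is given by $\hat{D}=D\otimes\k[t,dt]$, with $(p_0,p_1)\colon\hat{D}\to D\times D$ the two evaluations at $t=0$ and $t=1$ and $s\colon D\to\hat{D}$ the unit inclusion. Hence, for $f,g\in\Alg_\dg(C,D)$, one has $f\sim_R g$ if and only if there is a dg algebra map $H\colon C\to D\otimes\k[t,dt]$ with $p_0\circ H=f$ and $p_1\circ H=g$.

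Next I would reinterpret $H$ as a Maurer--Cartan element. Using the identification $\MC(\Conv(C,E))=\Alg_\dg(C,E)$ for an arbitrary dg algebra $E$, applied to $E=D\otimes\k[t,dt]$, the map $H$ is the same datum as an element of $\MC(\Conv(C,D\otimes\k[t,dt]))$. The key structural point is that the convolution $L_\infty$ algebra is natural and multiplicative in its target: the products (\ref{convprod}) are assembled from the $A_\infty$ coproduct on the space of generators $V[1]$, which only sees $C$, and from the iterated multiplication in the target, so replacing $D$ by $D\otimes\k[t,dt]$ factors each product as a product in $D$ tensored with a product in $\k[t,dt]$. This yields an identification of $L_\infty$ algebras
\[
\Conv(C,D\otimes\k[t,dt])\cong\Conv(C,D)\,\hat{\otimes}\,\k[t,dt],
\]
compatible with the evaluations $p_0,p_1$, where $\hat{\otimes}$ is the completed tensor product adapted to the pro-nilpotent filtration of $\Conv(C,D)$.

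With this identification the statement becomes exactly the one established in the proof of Proposition \ref{prophg}. Writing a Maurer--Cartan element of $\Conv(C,D)\,\hat{\otimes}\,\k[t,dt]$ in the form $A=-h\otimes dt+f(t)$ with $h\in\Conv(C,D)^0$, the two components of the Maurer--Cartan equation are precisely (\ref{appa12}), saying that every specialization $f(a)$ is a Maurer--Cartan element, and (\ref{appa11}), saying that $f(t)$ is the trajectory of the vector field $v(h)$ defined in (\ref{appa2}). Thus an $A$ with $p_0(A)=f$ and $p_1(A)=g$ exists if and only if $f$ and $g$ are joined by the morphism $v(h)$ in the Deligne groupoid $D(\Conv(C,D))$; combined with the first paragraph this yields both implications simultaneously.

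The step I expect to be the main obstacle is making the displayed identification precise. The functor $\Hom_\k(V[1],-)$ does not commute with the infinite direct sum underlying $\k[t,dt]$, so $\Conv(C,D\otimes\k[t,dt])$ is genuinely larger than the naive $\Conv(C,D)\otimes\k[t,dt]$, and one must pass to the correct completion and verify that it is compatible with the $L_\infty$ structure and with $p_0,p_1$. The remedy is that $\Conv(C,D)$ is pro-nilpotent by construction, so the Maurer--Cartan equation, the integrated flow $\exp(v(h))$, and the Campbell--Baker--Hausdorff formula all converge in the completed tensor product; one then checks that passing between polynomial paths $f(t)$ and their completed counterparts does not alter the resulting equivalence relation, which is what licenses the clean appeal to Proposition \ref{prophg}.
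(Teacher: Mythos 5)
Your proposal is correct and follows essentially the same route as the paper's proof: take the explicit path object $D\otimes\k[t,dt]$, identify dg algebra maps $C\to D\otimes\k[t,dt]$ with Maurer--Cartan elements of the completed convolution algebra $\Conv(C,D)\,\hat{\otimes}\,\k[t,dt]$, and invoke the Hinich--Getzler description of $\pi_0$ of the Deligne groupoid, with pro-nilpotence of $\Conv(C,D)$ handling convergence and the completion. The only point the paper makes slightly more explicit is the reduction from a $t$-dependent $h(t)$ in the $dt$-component to a constant $h$ (by extracting Taylor coefficients and observing each truncation is again Maurer--Cartan), which you absorb into the appeal to the earlier proposition.
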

\begin{proof}
Construct explicitely a path object of $D$, as follows.

Consider the commutative dg algebra $\k[t,dt]$ where
\begin{equation}\label{ktdt}
\deg t=0,\deg (dt)=1, d(t)=dt, d(dt)=0
\end{equation}
Consider $\hat{D}=D\otimes \k[t,dt]$.
Define two maps $p_0,p_1\colon \hat{D}\to D$ as $p_i(x)=x|_{t=i, dt=0}$. The maps $p_0,p_1$ are maps of dg associative algebras, and they give $(p_0,p_1)\colon \hat{D}\to D\times D$ (recall that the underlying vector space of the direct product $A_1\times A_2$ of two (dg) algebras $A_1$ and $A_2$ is $A_1\oplus A_2$, and the product is $(a,b)\cdot (c,d)=(ac,bd)$). The map $\Delta\colon D\to D\times D$ is $\Delta(d)=(d,d)$, and the map $s\colon D\to\hat{D}$ is $s(d)=d\cdot 1+ 0\cdot dt$ (that is, $s(d)$ is the constant 0-form equal to $d$). It is clear that the diagram \eqref{eqpath} commutes. Moreover, $s$ is a quasi-isomorphism (that is, a weak equivalence), because $\k[t,dt]$ is quasi-isomorphic to $\k$ over a field of characteristic 0, and $(p_0,p_1)$ is a fibration (that is, is component-wise surjective).  It follows that $\hat{D}$ defines a path object of $D$, see Section \ref{sectionlrhot}.

Then $f,g\in\Hom(C,D)$ are right-homotopic with respect to $\hat{D}$ if there is a map $\theta\colon C\to\hat{D}$ such that the diagram \eqref{equationrhot} commutes.

The map $\theta$ defines a Maurer-Cartan element in $\Conv(C,D\otimes \k[t,dt])=\Conv(C,D)\hat{\otimes}\k[t,dt]$.
This Maurer-Cartan element is given by its components with $1$ and $dt$:
$$
\theta=-h(t)dt+F(t)
$$
where $h(t)\in\g^0\hat{\otimes}\k[t]$, $F(t)\in\g^1\hat{\otimes}\k[t]$, $\g=\Conv(C,D)$.
A priori $h(t)$ depends on $t$. Denote by $h_0,h_1,\dots$ the Taylor coefficients of $h$. 
One easily sees that for each $i$, $\theta_i=-h_i\otimes t^idt+F(t)$ is a Maurer-Cartan element.
In particular, $\theta_0=-h_0dt+F(t)$ is a Maurer-Cartan element. The commutativity of the diagram \eqref{equationrhot} implies that $F(0)=f, F(1)=g$.

Proposition \ref{prophg} gives the result.

\end{proof}

\subsection{\sc The case of small dg categories}
\subsubsection{\sc The closed model structure}\label{tabcmc}
A {\it graph} $\Gamma$ of graded vector categories has a set of objects $\Ob\Gamma$, and for each $x,y\in \Ob\Gamma$, a graded vector space $\Gamma(x,y)$.

A small dg category $C$ is called {\it semi-free} if there is an ascending filtration of dg categories $C_0\subset C_1\subset C_2\subset\dots$ such that all $C_i$ have $\Ob C$ as their objects, the category $C_{i+1}$ is freely generated by $C_i$ and a graph $\Gamma_{i+1}$ with objects $\Ob C$, and such that for any $f\in\Gamma_{i+1}(x,y)$, $df\in C_i$. In particular, the category $C_0$ is freely generated by a graph $\Gamma_0$, and for any $f\in C_0(x,y)$ one has $df=0$. 

Dropping the condition that $C_0$ is free dg category with 0 differential, one gets a {\it standard cofibration} $i\colon C_0\to C$. 

Recall that the category $\Cat_\dg(\k)$ of small dg categories over $\k$ admits a cofibrantly generated closed model structure, described in [Tab1]. Its weak equivalences are quasi-equivalences of dg categories, and fibrations are described as follows:

A dg functor $F\colon C\to D$ is called a {\it fibration} if (a) for any two objects $x,y\in C$, the map $C(x,y)\to D(F(x),F(y))$ induced by $F$, is a fibration of complexes (that is, a component-wise surjection), and (b) for any $x\in C$ and any isomorphism $v\colon [F](x)\to y^\prime$ in $H^0(D)$ there exists an isomorphism $u\colon x\to y$ in $H^0(C)$ such that $[F](u)=v$. In particular, any object is fibrant.

The cofibrations are defined as those morphisms which have the left lifting property with respect to the acyclic fibrations. 

One has the following explicit description of the cofibrations.
\begin{prop}
Any cofibration in the closed model structure on $\Cat_\dg(\k)$ is a retract of a standard cofibration. Any cofibrant object is a retract of a semi-free dg category. 
\end{prop}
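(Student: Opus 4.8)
The plan is to deduce the statement from the fact that $\Cat_\dg(\k)$ is a \emph{cofibrantly generated} closed model category, following the same route as Proposition \ref{propretract}. Recall from Section \ref{sectioncmcdg} that Tabuada's generating cofibrations are $I=\{\alpha,s(n)\}_{n\in\mathbb{Z}}$, with $\alpha\colon\varnothing\to\underline{\k}$ and $s(n)\colon C(n)\to P(n)$. By the standard description of cofibrations in a cofibrantly generated closed model category [Hir, Prop.\ 11.2.1(1)], every cofibration is a retract of a relative $I$-cell complex and every cofibrant object is a retract of an $I$-cell complex. Thus the only real content is the identification of relative $I$-cell complexes with standard cofibrations, and of $I$-cell complexes with semi-free dg categories, in the sense of the present subsection.

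First I would unwind the two cell-attachment moves of Section \ref{sectioncmcdg}. Attaching a cell along $\alpha$ (move (i)) forms $Y_{n+1}=Y_n\sqcup\underline{\k}$; it adjoins a single new object together with only its identity endomorphism, enlarging the object set but introducing no new generating morphism. Attaching a cell along $s(k)$ (move (ii)) is a pushout of $s(k)$ along some $F\colon C(k)\to Y_n$. Since $C(k)(a,b)=\k[k-1]$ is a single closed generator in degree $-k+1$ and $P(k)(a',b')=D^k$ enlarges it by a degree $-k$ element $u$ with $du$ equal to that generator, the pushout amounts to freely adjoining to $Y_n$ one new morphism $u$ of degree $-k$ whose differential $du=F(\mathrm{gen})$ lies in $Y_n$. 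This is precisely the elementary move building a standard cofibration: a free generator whose differential lands in the previous stage of the filtration.

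Next I would reorder the attachments so that all moves of type (i) precede all moves of type (ii). This is legitimate because a type-(ii) move only requires that its two objects and the closed morphism $F(\mathrm{gen})$ already be present, and $F(\mathrm{gen})$ is always a combination of previously adjoined generators; hence no type-(ii) move depends on a later type-(i) move, while a type-(i) move depends on nothing. After the reordering, a relative $I$-cell complex $f\colon X\to Y$ is displayed as a filtration $X=C_0\subset C_1\subset\cdots$ in which every $C_i$ carries the full object set of $Y$ and each $C_{i+1}$ is freely generated over $C_i$ by a graph of new morphisms with differentials in $C_i$ --- exactly a standard cofibration. Taking $X=\varnothing$, the discrete category on the adjoined objects (free on the empty graph, with zero differential) serves as $C_0$, so an $I$-cell complex is semi-free. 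Passing to retracts then yields both assertions.

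The main obstacle is this identification step rather than any computation: one must verify that the pushout along $s(k)$ genuinely produces a single \emph{free} generator with prescribed differential, imposing no relation, and that the reordering argument survives for transfinite cell complexes. The latter is exactly the technical point flagged in the Remark following Section \ref{sectioncmcdg}, and it is handled by the transfinite small-object argument of [Hir, Ch.\ 10], with the filtration indexed by an ordinal and the colimit taken along it.
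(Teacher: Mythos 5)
Your proposal is correct and follows essentially the same route as the paper, whose proof is simply the citation of [Hir, Prop.\ 11.2.1(1)] together with Tabuada's description of the generating cofibrations $I=\{\alpha,s(n)\}$. The extra work you do --- unwinding the two cell-attachment moves, checking that the pushout along $s(k)$ freely adjoins a single generator with prescribed differential, and reordering the object-adjunctions to the front so that a relative $I$-cell complex is literally a standard cofibration --- is exactly the identification the paper leaves implicit, and it is carried out correctly.
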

\begin{proof}
It follows from the general description of cofibrations in a cofibrantly generated closed model category, given in [Hir, Prop. 11.2.1 (1)], and from the description of generating cofibrations, given in [Tab1, Th. 1.8].
\end{proof}

\subsubsection{\sc The convolution $L_\infty$ algebra}
Let $C,D$ be small dg categories, with $C$ semi-free. The dg functors $\Fun_\dg(C,D)$ can be described, via the convolution dg Lie algebra, as follows.

Denote by $\Gamma$ the graph of generators for $C$. As a graded $\k$-linear category, $C$ is the free category generated by the graph $\Gamma$. Then $\Gamma[1]$ becomes an $A_\infty$ cocategory. 

Let $\Phi\in\Sets(\Ob C,\Ob D)$. Define a non-unital $A_\infty$ algebra 
\begin{equation}
\Conv_\Phi(C,D):=\bigoplus_{X,Y\in \Ob C}\Hom_\k(\Gamma(X,Y)[1],D(\Phi(X),\Phi(Y))
\end{equation}
The Taylor components of this $A_\infty$ algebra are defined via the convolution products, as in \eqref{convprod}.

Finally, skew-symmetrising the Taylor components of the $A_\infty$ algebra $\Conv_\Phi(C,D)$, we get an $L_\infty$ algebra. 
By abuse of notations, we denote this $L_\infty$ algebra also by $\Conv_\Phi(C,D)$. 

One easily has:
\begin{lemma}
Let $F\colon C\to D$ be a dg functor whose action on objects is given by a map $\Phi\colon \Ob C\to\Ob D$. Then $F$ defines a Maurer-Cartan element in the $L_\infty$ algebra $\Conv_\Phi(C,D)$. Conversely, any Maurer-Cartan element in $\Conv_\Phi(C,D)$ gives a dg functor $F\colon C\to D$ whose action on objects is $\Phi$.
\end{lemma}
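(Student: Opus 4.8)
The plan is to use that the underlying graded category of $C$ is \emph{free} on the graph $\Gamma$, so that a dg functor over $\Phi$ is determined by, and freely reconstructed from, its values on the generators; under this dictionary the condition of commuting with the differential becomes exactly the Maurer--Cartan equation.

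First I would record the purely graded statement. Since $C$ is freely generated as a graded $\k$-linear category by $\Gamma$, giving a degree-preserving graded functor $F\colon C\to D$ with object-action $\Phi$ is the same as giving a degree-preserving family of maps $F|_\Gamma\colon\Gamma(X,Y)\to D(\Phi X,\Phi Y)$, extended multiplicatively. A degree-preserving map $\Gamma(X,Y)\to D(\Phi X,\Phi Y)$ is precisely a degree-$1$ map $\Gamma(X,Y)[1]\to D(\Phi X,\Phi Y)$, so these families are exactly the degree-$1$ elements $\alpha=\alpha_F\in\Conv_\Phi(C,D)^1$. Thus $F\mapsto\alpha_F$ is a bijection between graded functors over $\Phi$ and degree-$1$ elements of the convolution algebra, and the whole problem reduces to matching ``$F$ is a dg functor'' with ``$\alpha_F$ solves Maurer--Cartan''.

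Next I would unwind the dg condition $d_D\circ F=F\circ d_C$. By multiplicativity of $F$ and the graded Leibniz rule it suffices to test this on a single generator $g\in\Gamma$. Here the $A_\infty$ cocategory structure on $\Gamma[1]$ is decisive: its cocompositions $\delta_{n-1}\colon\Gamma[1]\to\Gamma[1]^{\otimes n}$ are defined so as to be dual to the length-$n$ part of $d_C$ on generators (the differential of a free category sends each generator to a sum of products of generators, graded by the number of factors). The convolution product $m_n(\alpha,\dots,\alpha)$ is the composite
\[
\Gamma[1]\xrightarrow{\ \delta_{n-1}\ }\Gamma[1]^{\otimes n}\xrightarrow{\ \alpha^{\otimes n}\ }D^{\otimes n}\xrightarrow{\ m\ }D ,
\]
where $m$ is the iterated composition of $D$ (shifts and signs suppressed), and the convolution differential $m_1$ combines $d_D$ with the internal/linear part $\delta_0$ of the cocategory structure. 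Evaluating on $g$, the term $m_1(\alpha)$ produces $d_D(Fg)$ together with the contribution of the length-$1$ part of $d_C g$, while $\sum_{n\ge 2}m_n(\alpha,\dots,\alpha)(g)$ reassembles $F$ applied to the higher-length part of $d_C g$. Hence $d_D\circ F=F\circ d_C$ on generators is exactly the $A_\infty$ Maurer--Cartan equation
\[
\sum_{n\ge 1} m_n(\alpha,\dots,\alpha)=0 .
\]

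Finally I would pass to the $L_\infty$ brackets $L_n$, which are the graded skew-symmetrisations of the convolution products $m_n$. Because $\alpha$ is of odd degree, on a single repeated argument the Koszul signs in the skew-symmetrisation are trivial and the $n!$ equal terms cancel the factor $\tfrac1{n!}$; hence the $A_\infty$ Maurer--Cartan equation above is equivalent to $\sum_{n\ge 1}\tfrac1{n!}L_n(\alpha,\dots,\alpha)=0$, the Maurer--Cartan equation in the $L_\infty$ algebra $\Conv_\Phi(C,D)$. Reading the correspondence backwards gives the converse: a Maurer--Cartan element $\alpha$ defines $F|_\Gamma$, which extends uniquely and multiplicatively to a graded functor $F$ over $\Phi$, and the Maurer--Cartan equation is precisely what forces $F$ to commute with $d_C$. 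I expect the only genuine obstacle to be the sign bookkeeping --- tracking the Koszul signs introduced by the shift $[1]$, by the cocompositions $\delta_{n-1}$, and by the skew-symmetrisation --- and checking that they are compatible, so that the $A_\infty$ and $L_\infty$ Maurer--Cartan equations indeed coincide on the degree-$1$ element $\alpha_F$.
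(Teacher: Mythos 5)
Your argument is correct and is exactly the unwinding of definitions that the paper intends (the paper states the lemma with no proof beyond ``one easily has''): the freeness of the underlying graded category reduces everything to generators, where $d_D\circ F=F\circ d_C$ becomes the $A_\infty$ Maurer--Cartan equation for the convolution products dual to $d_C$, and the passage to the $L_\infty$ form via skew-symmetrisation on a single odd-degree argument is the standard $n!$ cancellation. The only point worth making explicit is that the series converges because the semi-free filtration forces $d_C$ of each generator to be a finite sum, which is the pro-nilpotence the paper invokes elsewhere.
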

\qed

\subsubsection{\sc The homotopy relation and its interpretation via the convolution $L_\infty$ algebra}
Let $D$ be a small dg category over $\k$. Recall our dg commutative algebra $\k[t,dt]$, see \eqref{ktdt}.
Consider $\hat{D}:=D\otimes \k[t,dt]$. It is a dg category having the same objects that $D$, and $\hat{D}(x,y)=D(x,y)\otimes \k[t,dt]$.

There is a diagram of dg categories and dg functors
\begin{equation}\label{eqpathdg}
\xymatrix{
&\hat{D}\ar[d]^{p=(p_0,p_1)}\\
D\ar[ur]^s\ar[r]_{\Delta\hspace{12pt}}&D\times D
}
\end{equation}
making $\hat{D}$ a path object of $D$. The maps $p_0,p_1$ are evaluation maps $dt\to 0$, and $t\to 0$ or $t\to 1$, correspondingly. The maps $s$ and $\Delta$ are clear. Recall that the dg category $A_1\times A_2$ has the set of objects $\Ob A_1\times \Ob A_2$, and $\Hom_{A_1\times A_2}(x\times y, x_1\times y_1)=\Hom_{A_1}(x,x_1)\oplus\Hom_{A_2}(y,y_1)$.
The map $s$ is clearly a weak equivalence. 

The dg functor $(p_0,p_1)$ is given $x\to (x,x)$ on objects, and $D(x,y)\otimes f(t,dt)\to (D(x,y)f(0,0)\oplus D(x,y)f(1,0))$ on morphisms.
We need to show that this dg functor is a fibration, see Section \ref{tabcmc}. Condition (a) is clearly fulfilled. For condition (b), note that $H^0(\hat{D})=H^0(D)$, and $H^0(D\times D)=H^0(D)\times H^0(D)$. The functor $[(p_0,p_1)]$ is the diagonal functor. 

\endcomment

\bigskip

{\small
\noindent {\sc Universiteit Antwerpen, Campus Middelheim, Wiskunde en Informatica, Gebouw G\\
Middelheimlaan 1, 2020 Antwerpen, Belgi\"{e}}}

\vspace{1mm}

{\small
\noindent{\sc Laboratory of Algebraic Geometry,
National Research University Higher School of Economics,
Moscow, Russia}}

\bigskip

\noindent{{\it e-mail}: {\tt Boris.Shoikhet@uantwerpen.be}}

\end{document}